\definecolor{dullmagenta}{rgb}{0.4,0,0.4}
\definecolor{darkblue}{rgb}{0,0,0.4}
\newtheorem{theorem}{Theorem}[section]
\newtheorem{lemma}[theorem]{Lemma}
\newtheorem{proposition}[theorem]{Proposition}
\newtheorem{corollary}[theorem]{Corollary}
\theoremstyle{definition}
\newtheorem{definition}[theorem]{Definition}
\newtheorem{example}[theorem]{Example}
\newtheorem{remark}[theorem]{Remark}
\def\car#1,#2,#3,#4{
$$
   \CD
   #1           @>{}>>          #2        \\
   @V{{}}VV                  @VV{{}}V  \\
   #3        @>{{}}>>   #4
   \endCD
   $$}
\def\Car#1,#2,#3,#4,#5,#6,#7,#8{
$$
   \CD
   #1           @>#2>>          #3        \\
   @V{#4}VV                  @VV{#5}V  \\
   #6        @>{#7}>>   #8
   \endCD
   $$}
\begin{document}

\title[catenarian  extensions]{Catenarian  FCP ring extensions}

\author[G. Picavet and M. Picavet]{Gabriel Picavet and Martine Picavet-L'Hermitte}
\address{Math\'ematiques \\
8 Rue du Forez, 63670 - Le Cendre\\
 France}
\email{picavet.mathu (at) orange.fr}

\begin{abstract} If $R\subseteq S$ is a ring extension of commutative unital rings, the poset $[R,S]$ of $R$-subalgebras of $S$ is called catenarian if it verifies the Jordan-H\"older property. This property has already been studied by Dobbs and Shapiro for  finite extensions of fields. We investigate this property for arbitrary ring extensions, showing that many type of extensions are catenarian. We reduce the characterization of catenarian extensions to the case of field extensions, an unsolved question at that time. 
\end{abstract}

\subjclass[2010]{Primary:13B02,13B21, 13B22, 06D99;  Secondary: 13B30, 12F10}

\keywords  {FIP, FCP extension, minimal extension, integral extension, support of a module, t-closure, lattice,  catenarian extension, Jordan-H\"older property, distributive extension, algebraic field extension, Galois extension}

\maketitle

\section{Introduction}

In this paper, we  consider the category of commutative and unital rings, whose    epimorphisms will be involved. If  $R\subseteq S$ is a (ring) extension, we denote  by $[R,S]$ the set of all $R$-subalgebras of $S$ and
  set $]R,S[: =[R,S]\setminus \{R,S\}$ (with a similar definition for $[R,S[$ or $]R,S]$). 
 
  A {\it lattice} is a poset $L$ such that every pair $a,b\in L$ has a supremum and an infimum.  We will consider 
    for an extension $R\subseteq S$, the poset $([R,S],\subseteq)$, which  is a {\it complete} lattice where the supremum of any non-void subset  is the compositum which we call {\it product} from now on and denote by $\Pi$ when necessary, and the infimum of any non-void subset is the intersection.   As a general rule, an extension $R\subseteq S$ is said to have some property of lattices if $[R,S]$ has this property. Any undefined  material
 is explained at the end of the section or in the next sections. We  only consider extensions of finite length, in a sense defined  below and called FCP extensions.  These extensions $R\subseteq S$ are the extensions such that $[R,S]$ is an Artinian and Noetherian lattice.
    
     Dobbs and Shapiro published a paper in which they examine finite field extensions with the catenarian property \cite{DS2}. These extensions are such that the lattice $[K,L]$ associated to a field extension $K\subseteq L$ has the Jordan-H\"older property {\it i.e.}, all finite  maximal chains of subfields going from $K$ to $L$ have the same cardinalities. Beyond many interesting  results, a complete characterization of these extensions is lacking and actually is an  open problem, that we did not try to solve. Our aim  is  to consider  arbitrary FCP extensions that are catenarian ({\it i.e.} that have the Jordan-H\"older property). Surprisingly, we are able to give substantial  positive results. Some types of ring extensions are naturally catenarian. 
    For example, in the FCP context, distributive extensions,  length $2$ extensions \cite{Pic 6}, Pr\"ufer extensions \cite{Pic 5},  some pointwise extensions \cite{CPP}, infra-integral extensions are catenarian. The catenarian property   has a good stability with respect to the usual constructions of commutative algebra.  An FCP ring extension $R\subseteq S$ is catenarian if and only if $R \subseteq \overline R$ is catenarian, where $\overline R$ is the integral closure of $R$ in $S$. However, there are some   ring extensions, namely the $t$-closed extensions  whose definition is given in the sequel, that are catenarian if and only if some associated  residual field extensions are catenarian. As  $t$-closed extensions  factorize any ring extension because any extension $R\subseteq S$ has a $t$-closure   ${}_S^tR$ of $R$ in $S$,  the reader may understand that a complete characterization  is unavoidable, unless the same problem is solved for fields.  Actually, some results are valid under the hypothesis ${}_S^tR \subseteq S$ is catenarian, for example if $[R,S]=[R,{}_S^tR]\cup[ {}_S^tR, S]$. 
    
     We give some complements on finite field extensions that are catenarian. Catenarian Galois extensions have been characterized in \cite{DS2}.
    
    Another lattice property is involved: the supersolvable property, a property considered by Dobbs and Shapiro   in the context of Galois group of a field extension, providing finite Galois extensions that are catenarian. Supersolvable lattices are defined in \cite[Example 3.14.4]{St}.  For Galois extensions, the two notions coincide. For arbitrary ring extensions, this is not the case. We must add some special condition of modularity, to get catenarity.
  
  This paper is the continuation of our earlier  papers on lattice properties of ring extensions, \cite{Pic 10} and \cite{Pic 11}, where we considered Boolean ring extensions and Loewy series of a ring extension.  We have a forthcoming paper about distributive extensions.    
     
  \section{ Some material of commutative algebra} 
  
 \subsection{Some conventions and notation}  
   A {\it local} ring is here what is called elsewhere a quasi-local ring. As usual, Spec$(R)$ and Max$(R)$ are the set of prime and maximal ideals of a ring $R$. 
  For an extension $R\subseteq S$ and an ideal $I$ of $R$, we write $\mathrm{V}_S(I):=\{P\in\mathrm{Spec }(S)\mid I\subseteq P\}$. 
    We denote by $\kappa_R(P)$ the residual field $R_P/PR_P$ at $P$. 
  The support of an $R$-module $E$ is $\mathrm{Supp}_R(E):=\{P\in\mathrm{Spec }(R)\mid E_P\neq 0\}$, and $\mathrm{MSupp}_R(E):=\mathrm{Supp}_R(E)\cap\mathrm{Max}(R)$. When $R 
 \subseteq S$ is an extension, we will set   $\mathrm{Supp}_R(T/R):= \mathrm{Supp}(T/R)$  and $\mathrm{Supp}_R(S/T):= \mathrm{Supp}(S/T)$ for each $T\in [R,S]$, unless otherwise specified. 
 
 If $R\subseteq S$ is a ring extension and $P\in\mathrm{Spec}(R)$, then $S_P$ is both the localization $S_{R\setminus P}$ as a ring and the localization at $P$ of the $R$-module $S$. We denote by $(R:S)$ the conductor of $R\subseteq S$. The integral closure of $R$ in $S$ is denoted by $\overline R^S$ (or by $\overline R$ if no confusion can occur). Finally,  $|X|$  is the cardinality of a set $X$,   $\subset$ denotes proper inclusion and, for a positive integer $n$, we set $\mathbb{N}_n:=\{1,\ldots,n\}$.  
    The characteristic of an integral domain $k$ is denoted by $\mathrm{c}(k)$.
  
     We now describe the material we use in this paper.

    \subsection{Results on FCP extensions} 
    
The extension $R\subseteq S$ is said to have FIP (for the ``finitely many intermediate algebras property") or an FIP  extension if $[R,S]$ is finite. 
We will say that $R\subseteq S$ is {\it chained} if $[R,S]$ is a {\it chain}, that is, a set of elements  that are pairwise comparable with respect to inclusion. We  also say that the extension $R\subseteq S$ has FCP  (or is an FCP  extension) if each chain in $[R,S]$ is finite. Clearly,  each extension that satisfies FIP must also satisfy FCP. 
Dobbs and the authors characterized FCP and FIP extensions \cite{DPP2}.

Our main tool are the minimal (ring) extensions, a concept that was introduced by Ferrand-Olivier \cite{FO}. Recall that an extension $R\subset S$ is called {\it minimal} if $[R, S]=\{R,S\}$. 
 The key connection between the above ideas is that if $R\subseteq S$ has FCP, then any maximal (necessarily finite) chain $\mathcal C$ of $R$-subalgebras of $S$, $R=R_0\subset R_1\subset\cdots\subset R_{n-1}\subset R_n=S$, with {\it length} $\ell(\mathcal C):=n <\infty$, results from juxtaposing $n$ minimal extensions $R_i\subset R_{i+1},\ 0\leq i\leq n-1$. 
An FCP extension is finitely generated, and  (module) finite if integral.
For any extension $R\subseteq S$, the {\it length} $\ell[R,S]$ of $[R,S]$ is the supremum of the lengths of chains of $R$-subalgebras of $S$. Notice  that if $R\subseteq S$ has FCP, then there {\it does} exist some maximal chain of $R$-subalgebras of $S$ with length $\ell[R,S]$ \cite[Theorem 4.11]
{DPP3}.   

The following 
results are  deeply involved in the sequel.  A ring extension $R\subseteq S$, defining  $S$ as an $R$-module, is called finite  if the $R$-module $S$ is of finite type.

\begin{theorem}\label{crucial}\cite[Th\'eor\`eme 2.2]{FO} and \cite[Proposition 4.6]{DPPS} 
\begin{enumerate}
\item A  minimal extension  
is either  finite or a flat epimorphism (of finite type).
\item A ring extension $R\subset S$  is minimal if and only if there is some $M\in\mathrm{Max}(R)$, called the {\it crucial} maximal ideal of the extension 
 and denoted by $\mathcal{C}(R,S)$,  
such that 
$R_M\subset S_M$ is minimal and $R_P=S_P$ for each $P\in\mathrm{Spec}(R)\setminus\{M\}$. In particular, $\mathrm{Supp}(S/R)= \{M\}$.
\end{enumerate}
\end{theorem}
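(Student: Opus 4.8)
The plan is to establish (1) first, since (2) follows from it together with a routine localization argument. For (1), fix $t\in S\setminus R$; minimality forces $R[t]=S$, so $S$ is a one-generated, hence finite type, $R$-algebra. If $t$ is integral over $R$, then $S=R[t]$ is a finitely generated $R$-module and the first alternative holds. Suppose $t$ is not integral over $R$; one must show $R\to S$ is a flat epimorphism. This is the classical input one borrows: following Ferrand--Olivier, one studies the conductor $\mathfrak{C}=(R:S)$ (also an ideal of $S$), shows --- using that rings of the shape $R+IS$ for ideals $I$ of $R$ lie in $[R,S]$ --- that $\mathfrak{C}$ is prime, and that the localization at $\mathfrak{C}$ together with the ``trivial'' part $R/\mathfrak{C}\to S/\mathfrak{C}$ realize $S$ as obtained from $R$ by localization, so that $R\to S$ is a composite of localizations and in particular a flat epimorphism of finite type. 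I would invoke \cite[Th\'eor\`eme 2.2]{FO} for this, or reproduce its trichotomy (ramified, decomposed, inert in the integral case, localization in the remaining case).

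The ``if'' direction of (2) is a clean localization argument. Assume $M\in\mathrm{Max}(R)$ with $R_M\subset S_M$ minimal and $R_P=S_P$ for all $P\ne M$. Then $R\ne S$, since $R_M\ne S_M$. For $T\in[R,S]$ and $P\in\mathrm{Spec}(R)$, the localization $T_P$ lies in $[R_P,S_P]$; if $P\ne M$ this gives $T_P=R_P=S_P$, while $T_M\in[R_M,S_M]=\{R_M,S_M\}$. If $T_M=R_M$, then $(T/R)_P=0$ for every $P$, so $T=R$; if $T_M=S_M$, then $(S/T)_P=0$ for every $P$, so $T=S$. Hence $[R,S]=\{R,S\}$. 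Finally $(S/R)_M=S_M/R_M\ne 0$ and $(S/R)_P=0$ for $P\ne M$, so $\mathrm{Supp}(S/R)=\{M\}$.

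For the ``only if'' direction of (2), assume $R\subset S$ is minimal. The localization map $[R,S]\to[R_P,S_P]$, $T\mapsto T_P$, is surjective for every $P$ --- the contraction $\{s\in S:s/1\in U\}$ of $U\in[R_P,S_P]$ lies in $[R,S]$ and localizes back to $U$ --- so for each $P$, either $R_P=S_P$ or $R_P\subset S_P$ is minimal. It remains to see that $X:=\mathrm{Supp}(S/R)$ is a single maximal ideal; it is nonempty, and it is stable under specialization (it is $\{P:(R:S)\subseteq P\}$ if $S$ is $R$-finite, and the complement of the generization-stable set $\mathrm{im}(\mathrm{Spec}(S)\to\mathrm{Spec}(R))$ if $R\to S$ is a flat epimorphism). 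Once $X$ is known to be a singleton, its unique point is automatically maximal, since a specialization-stable singleton cannot be a non-maximal prime. To see $X$ is a singleton: in the finite case, reduce modulo $\mathfrak{C}=(R:S)$, so that $\mathfrak{C}=0$, and observe that for $0\ne r\in R$ the subalgebra $R+rS$ cannot equal $R$ (that would put $r$ in $\mathfrak{C}$), hence equals $S$, so $S/R=r(S/R)$; since $S/R$ is then a nonzero finitely generated $R$-module with $\mathrm{Supp}(S/R)=\mathrm{Spec}(R)$, Nakayama forces every such $r$ to be a unit, $R$ is a field, so $\mathfrak{C}$ was maximal and $X=\{\mathfrak{C}\}$. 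In the flat-epimorphism case, if $X$ had at least two elements the structure theory of flat epimorphisms would furnish an $R$-subalgebra $T$ of $S$ with $R\subsetneq T\subsetneq S$ (removing a single point from the deleted locus $X$), contradicting minimality; so $X$ is a single point. In both cases $R_M\subset S_M$ is minimal and $R_P=S_P$ for $P\ne M$, as required.

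The main obstacle is (1) in the non-integral case --- that a minimal extension which is not module-finite must be a flat epimorphism --- together with the accompanying fact used in (2), that the intermediate rings of a flat epimorphism are controlled by the specialization-stable subsets of the support; both rest on the structure theory of ring epimorphisms and are not reducible to short computations. Everything else (the localization reductions, the Nakayama argument, and the support bookkeeping) is routine once these are granted.
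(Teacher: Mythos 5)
This statement is quoted in the paper without proof: it is exactly the content of \cite[Th\'eor\`eme 2.2]{FO} and \cite[Proposition 4.6]{DPPS}, so there is no in-paper argument to compare against, and the right standard is whether your reconstruction is sound. It essentially is. The elementary parts are done correctly and completely: $S=R[t]$ for any $t\in S\setminus R$ gives finite type and the finite/non-integral dichotomy; the surjectivity of $[R,S]\to[R_P,S_P]$ via the contraction $\{s\in S: s/1\in U\}$ is the right mechanism both for the ``if'' direction (a $T$ with $T_M=R_M$ or $T_M=S_M$ is forced to be $R$ or $S$ by checking all localizations) and for showing each $R_P\subseteq S_P$ is trivial or minimal; and the finite-case identification of the support is a correct Nakayama argument showing $R/(R:S)$ is a field, which simultaneously yields maximality of the crucial ideal and $\mathrm{Supp}(S/R)=\mathrm V((R:S))=\{M\}$.

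Two spots carry real weight and are, as you say yourself, outsourced to \cite{FO}. First, that a minimal extension with $R$ integrally closed in $S$ is a flat epimorphism: your parenthetical sketch (``$S$ is obtained from $R$ by localization, so $R\to S$ is a composite of localizations'') overstates what Ferrand--Olivier prove --- a minimal flat epimorphism need not be a localization at a multiplicative set, and their Th\'eor\`eme 2.2 only delivers the flat-epimorphism conclusion --- but since you invoke the theorem itself rather than relying on that description, nothing breaks. Second, in the ``only if'' direction of (2), the flat-epimorphism case of ``$\mathrm{Supp}(S/R)$ is a singleton'' is only gestured at (``removing a single point from the deleted locus would furnish an intermediate ring''); making that precise requires the correspondence between intermediate rings of a flat epimorphism and generization-stable subsets of $\mathrm{Spec}(R)$, which again is the content of the cited structure theory. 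Given that the paper itself offers nothing beyond the two citations, your proposal is an acceptable and honest reconstruction: it proves everything that is elementary and correctly isolates the two points where \cite{FO} must be invoked.
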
 

\begin{proposition}\label{1.14} 
 \cite[Corollary 3.2]{DPP2} 
Suppose there exists a maximal chain $R=R_0 \subset\cdots\subset  R_i \subset\cdots \subset R_n=S$ of extensions, where $R_i\subset R_{i+1}$ is minimal with crucial ideal $M_i$. $($For instance, suppose $R \subseteq S$ is an $\mathrm{FCP}$ extension.$)$ Then $\mathrm {Supp}(S/R)=\{M_i\cap R\mid i=0,\ldots,n-1\}$.
\end{proposition}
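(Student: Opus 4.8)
The plan is to determine $\mathrm{Supp}(S/R)$ one prime at a time by localization. Since localization is exact, $(S/R)_P\cong S_P/R_P$ for every $P\in\mathrm{Spec}(R)$, so $P\in\mathrm{Supp}(S/R)$ if and only if $R_P\subsetneq S_P$. I would localize the given chain at $P$, obtaining $R_P=(R_0)_P\subseteq\cdots\subseteq(R_n)_P=S_P$, and show that the $i$-th step is strict precisely when $M_i\cap R\subseteq P$. Indeed $(R_{i+1})_P/(R_i)_P\cong(R_{i+1}/R_i)\otimes_{R_i}(R_i)_P$, and Theorem~\ref{crucial}(2) gives $\mathrm{Supp}_{R_i}(R_{i+1}/R_i)=\{M_i\}$; since $(R_i)_P$ is $R_i$ localized at the image of $R\setminus P$, the prime $M_i$ survives there exactly when $M_i\cap(R\setminus P)=\emptyset$, that is $M_i\cap R\subseteq P$, and then $(R_{i+1}/R_i)\otimes_{R_i}(R_i)_P\neq 0$.

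Granting this, the inclusion $\{M_i\cap R\mid i\}\subseteq\mathrm{Supp}(S/R)$ is immediate: for a fixed $i$ put $Q:=M_i\cap R$; then $M_i\cap R\subseteq Q$, so $(R_i)_Q\subsetneq(R_{i+1})_Q$, whence $R_Q\subseteq(R_i)_Q\subsetneq(R_{i+1})_Q\subseteq S_Q$ and $Q\in\mathrm{Supp}(S/R)$. Taken over all $i$, the first paragraph only shows a priori that $\mathrm{Supp}(S/R)=\bigcup_i\mathrm{V}_R(M_i\cap R)$, a closed subset of $\mathrm{Spec}(R)$; the substance of the statement is that this union collapses onto the finite set of the $M_i\cap R$, and proving this is the reverse inclusion.

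So I would take $P\in\mathrm{Supp}(S/R)$, hence $R_P\subsetneq S_P$, so some step of the localized chain is strict; let $j$ be the least such index, so that $(R_j)_P=(R_0)_P=R_P$. The module $N:=(R_{j+1})_P/(R_j)_P$ is then nonzero over the local ring $R_P$, so $PR_P\in\mathrm{Supp}_{R_P}(N)$. On the other hand, base-changing $\mathrm{Supp}_{R_j}(R_{j+1}/R_j)=\{M_j\}$ along the localization $R_j\to(R_j)_P=R_P$ gives $\mathrm{Supp}_{R_P}(N)=\{M_j(R_j)_P\}$, a single point, nonempty because the $j$-th step is strict (so $M_j\cap R\subseteq P$). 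Comparing, $M_j(R_j)_P=PR_P$, and contracting to $R$ yields $M_j\cap R=P$; hence $P\in\{M_i\cap R\mid i\}$, which finishes the argument.

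The only computation that is not bookkeeping is the last contraction $M_j(R_j)_P\cap R=M_j\cap R$: one checks that $r\in M_j(R_j)_P\cap R$ iff $ur\in M_jR_j=M_j$ for some $u\in R\setminus P$, i.e. $ur\in M_j\cap R$, and since $M_j\cap R$ is a prime of $R$ disjoint from $R\setminus P$ (as $M_j\cap R\subseteq P$) this forces $r\in M_j\cap R$. I expect this, together with keeping straight over which ring each support is computed, to be the main point of care. It is worth noting that the argument never needs the (correct but heavier) fact that a minimal extension stays minimal under localization: it uses only exactness of localization, the support computation in Theorem~\ref{crucial}(2), and the triviality that a nonzero module over a local ring is supported at its maximal ideal.
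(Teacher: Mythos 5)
Your proof is correct, but note that the paper offers no argument of its own here: Proposition~\ref{1.14} is quoted verbatim from \cite[Corollary 3.2]{DPP2}, so the only ``proof'' in the text is a citation, and your write-up is a genuine, self-contained alternative. The key points all check out. The equivalence ``the $i$-th localized step is strict iff $M_i\cap R\subseteq P$'' does follow from $\mathrm{Supp}_{R_i}(R_{i+1}/R_i)=\{M_i\}$ (Theorem~\ref{crucial}(2)) together with the general fact that $T^{-1}M\neq 0$ exactly when $\mathrm{Supp}(M)$ contains a prime disjoint from $T$; since this fact needs no finite generation, your argument covers the Pr\"ufer-minimal steps, where $R_{i+1}/R_i$ need not be a finite $R_i$-module --- a point worth making explicit, as you do at the end. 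The reverse inclusion is the substantive part, and your device of taking the least strict index $j$, so that $(R_j)_P=R_P$ is local with maximal ideal $PR_P$, and then comparing the two computations of $\mathrm{Supp}_{R_P}\bigl((R_{j+1})_P/(R_j)_P\bigr)$ (namely $\{PR_P\}\subseteq\mathrm{Supp}$ by nontriviality over a local ring, versus $\{M_j(R_j)_P\}$ by base change), correctly forces $M_j(R_j)_P=PR_P$ and hence $P=M_j\cap R$ via the contraction you verify. Your closing observation is also the right one to emphasize: the first paragraph alone only yields $\mathrm{Supp}(S/R)=\bigcup_i\mathrm{V}_R(M_i\cap R)$, and the collapse of this closed set onto the finite set $\{M_i\cap R\mid i\}$ is precisely what the least-index argument buys. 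I see no gap.
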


 Recall  that an  extension $R\subseteq S$ is {\it Pr\"ufer} (or  a normal pair) if $R\subseteq T$ is a flat epimorphism for each $T\in[R,S]$ \cite[Theorem 5.2, page 47]{KZ}. In \cite{Pic 5}, we called a minimal extension which is a flat epimorphism, a {\it Pr\"ufer minimal} extension. 
 Three types of integral minimal extensions exist, characterized in the next theorem, (a consequence of  the fundamental lemma of Ferrand-Olivier), so that there are four types of minimal extensions.
\begin{theorem}\label{minimal} \cite [Theorem 2.2]{DPP2} Let $R\subset T$ be an extension and  $M:=(R: T)$. Then $R\subset T$ is minimal and finite if and only if $M\in\mathrm{Max}(R)$ and one of the following three conditions holds:

\noindent (a) {\bf inert case}: $M\in\mathrm{Max}(T)$ and $R/M\to T/M$ is a minimal field extension.

\noindent (b) {\bf decomposed case}: There exist $M_1,M_2\in\mathrm{Max}(T)$ such that $M= M _1\cap M_2$ and the natural maps $R/M\to T/M_1$ and $R/M\to T/M_2$ are both isomorphisms.

\noindent (c) {\bf ramified case}: There exists $M'\in\mathrm{Max}(T)$ such that ${M'}^2 \subseteq M\subset M',\  [T/M:R/M]=2$, and the natural map $R/M\to T/M'$ is an isomorphism.
\end{theorem}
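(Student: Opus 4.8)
The plan is to reduce the statement, through the conductor, to a classification of finite-dimensional algebras over a field --- which is essentially the Ferrand--Olivier fundamental lemma --- and then to translate back. The device used everywhere is that $M:=(R:T)$, being the conductor, is an ideal of $T$ contained in $R$, so $U\mapsto U/M$ is an order isomorphism $[R,T]\to[R/M,T/M]$ and $R/M\subset T/M$ has trivial conductor.

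For the easy implication, suppose $M\in\mathrm{Max}(R)$ and one of (a)--(c) holds; put $k:=R/M$. Under the above isomorphism, $R/M\to T/M$ becomes a minimal field extension $k\subset k'$ in case (a), $k\hookrightarrow k\times k$ in case (b), and $k\hookrightarrow k[X]/(X^2)$ in case (c); each is minimal, so $[R,T]=\{R,T\}$. In each case $T/M$ is a finite-dimensional $k$-vector space, so lifting a $k$-basis of $(T/M)/(R/M)$ to $\theta_1,\dots,\theta_s\in T$ gives $T=R+R\theta_1+\cdots+R\theta_s$, and $T$ is module finite over $R$.

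For the converse, let $R\subset T$ be minimal and module finite. By the reduction it suffices to treat $(R:T)=0$ and to show that then $R$ is a field $k$ and $T$ is $k$-isomorphic to one of the three model algebras. So assume $(R:T)=0$. Then $T/R$ is a finitely generated $R$-module with $\mathrm{Ann}_R(T/R)=(R:T)=0$, hence $\mathrm{Supp}_R(T/R)=\mathrm{Spec}(R)$; but Theorem~\ref{crucial}(2) gives $\mathrm{Supp}_R(T/R)=\{M'\}$ for the crucial maximal ideal $M'$, so $\mathrm{Spec}(R)=\{M'\}$ and $R$ is local. If $R$ were not a field, fix a nonzero non-unit $a\in M'$; a direct check (the cross term $a^2t_1t_2$ lies in $aT$) shows $R+aT$ is an intermediate subring, so equals $R$ or $T$ by minimality. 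The first gives $a\in(R:T)=0$; the second gives $a\,(T/R)=T/R$, hence $T/R=0$ by Nakayama since $a\in M'$ and $T/R$ is finitely generated. Both are absurd, so $R=k$ is a field.

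It remains to classify finite-dimensional $k$-algebras $T\neq k$ with $k\subset T$ minimal. Write $T=\prod_{i=1}^r T_i$ with each $(T_i,\mathfrak n_i)$ local Artinian. When $r\geq2$, exhibiting a proper intermediate subring strictly containing $k$ --- namely $ke_i+\prod_{j\neq i}T_j$ (with $e_i$ the $i$-th idempotent) if some $T_i\neq k$, and $\{(x_1,\dots,x_r)\in k^r\mid x_1=x_2\}$ if $T\cong k^r$ with $r\geq3$ --- contradicts minimality; so either $T\cong k\times k$ (case (b)) or $r=1$. If $r=1$, then $T$ is local with nilpotent maximal ideal $\mathfrak n$ and residue field $k'$. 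If $\mathfrak n=0$ then $T=k'$ and $k\subset k'$ is a minimal field extension (case (a)). If $\mathfrak n\neq0$, choose $0\neq\theta$ in the last nonzero power of $\mathfrak n$, so $\theta^2=0$ and $\theta\notin k$ (a nonzero scalar is a unit of $T$, hence not in $\mathfrak n$); then $k+k\theta$ is an intermediate subring properly containing $k$, so equals $T$, whence $\dim_kT=2$, $k'=k$, $\mathfrak n=k\theta$ and $T\cong k[X]/(X^2)$ (case (c)). Unwinding the reduction --- $k=R/M$, $M'\in\mathrm{Max}(T)$ lying over $M$, and $M_1,M_2$ the maximal ideals of $T$ over $M$ in the decomposed case --- turns these into conditions (a), (b), (c). I expect the real work to sit in this last step: organising the idempotent case analysis, and, more delicately, verifying that the intrinsic shape of $T/M$ genuinely matches the arithmetic of $M,M',M_1,M_2$ in the statement; one must also stay in the non-Noetherian generality by invoking element-wise nilpotence rather than nilpotence of ideals.
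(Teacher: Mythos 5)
The paper offers no proof of this statement: it is imported verbatim from \cite[Theorem 2.2]{DPP2} and ultimately rests on the fundamental lemma of Ferrand--Olivier, so there is no in-paper argument to compare yours against. Your reconstruction is correct and is essentially the classical route: reduce modulo the conductor $M=(R:T)$ using the order isomorphism $[R,T]\to[R/M,T/M]$, show that in the trivial-conductor case the base ring is a field (your $R+aT$ device combined with Nakayama is exactly the standard trick, and your appeal to Theorem \ref{crucial}(2) to get $\mathrm{Spec}(R)=\{M'\}$ is legitimate here, since that theorem is itself a quoted black box proved independently of the classification in the original sources, so no circularity enters), and then classify the minimal finite-dimensional $k$-algebras via the Artinian product decomposition, landing on $k'$, $k\times k$ and $k[X]/(X^2)$, which translate back into cases (a), (b), (c). Two small points you should make explicit. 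First, in the ``if'' direction of case (b) one needs $M_1\neq M_2$ before invoking the Chinese Remainder Theorem to get $T/M\cong k\times k$; this is forced, since $M_1=M_2$ would make $R/M\to T/M=T/M_1$ an isomorphism and hence $T=R+M=R$, contradicting $R\subset T$. Second, your closing worry about non-Noetherian generality is unnecessary: after the reduction you are dealing with a finite-dimensional $k$-algebra, which is Artinian, so the last nonzero power of $\mathfrak n$ exists without further ado. With those remarks folded in, the proof is complete and matches the argument the cited references actually give.
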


We recall here the Crosswise Exchange which will be used in the rest of the paper. 

\begin{lemma}\label{1.13} (Crosswise Exchange) \cite[Lemma 2.7]{DPP2} Let $R\subset S$ and $S\subset T$ be  minimal  extensions,  $M:= \mathcal{C}(R,S)$, $N:= \mathcal{C}(S,T)$ and $P:=N\cap R$ be such that $P\not\subseteq M$. Then there is   $S' \in [R,T]$ such that $R\subset S'$  is minimal  of the same type as $S\subset T$ and $P= \mathcal{C}(R,S')$; and $S'\subset T$ is  minimal  of the same type as $R\subset S$ and $MS' =\mathcal{C}(S',T)$. Moreover,  $[R,T]=\{R,S,S',T\}$ and $R_Q=S'_Q=T_Q$ for $Q\in \mathrm{Max}(R)\setminus \{M,P\}$.
\end{lemma}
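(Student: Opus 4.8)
We argue by localizing at the maximal ideals of $R$. By Theorem \ref{crucial}(2) we have $\mathrm{Supp}(S/R)=\{M\}$ and $\mathrm{Supp}_S(T/S)=\{N\}$, so Proposition \ref{1.14}, applied to the chain $R\subset S\subset T$, gives $\mathrm{Supp}(T/R)=\{M,N\cap R\}=\{M,P\}$. First, $P\in\mathrm{Max}(R)$: a support is stable under specialization, so if $P$ were not maximal it would be properly contained in some maximal ideal $Q$, which would then lie in $\mathrm{Supp}(T/R)=\{M,P\}$, forcing $Q=M$ and hence $P\subset M$, against the hypothesis $P\not\subseteq M$. (In particular $M\neq P$, since $P=M$ would give $P\subseteq M$.) Thus $M$ and $P$ are two distinct, hence comaximal, maximal ideals of $R$. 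Now localize the chain $R\subseteq S\subseteq T$ at each $Q\in\mathrm{Max}(R)$; using the two support computations and $N\cap R=P\not\subseteq M$ one finds: at $M$, $R_M\subsetneq S_M=T_M$, and $R_M\subset T_M$ is minimal of the same type as $R\subset S$; at $P$, $R_P=S_P\subsetneq T_P$, and $R_P\subset T_P$ is minimal of the same type as $S\subset T$; and $R_Q=S_Q=T_Q$ for $Q\in\mathrm{Max}(R)\setminus\{M,P\}$.

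Next, set $S':=\{t\in T\mid t/1\in R_M\text{ in }T_M\}$, the preimage of the subring $R_M$ under the canonical map $T\to T_M$; then $S'\in[R,T]$. The plan is to compute its localizations: $S'_Q=R_Q=T_Q$ for $Q\notin\{M,P\}$, which is clear from $R_Q\subseteq S'_Q\subseteq T_Q=R_Q$; $S'_M=R_M$, where $\supseteq$ is obvious and $\subseteq$ holds because every element of $S'$ maps into $R_M$; and, crucially, $S'_P=T_P$. For the last one must show that each $t\in T$ admits $s\in R\setminus P$ with $st/1\in R_M$. When $R\subset S$ is finite, $S/R$ is a finitely generated $R$-module supported at $\{M\}$, so $M^nS\subseteq R$ for some $n\geq1$; since $M\not\subseteq P$ we may pick $s\in M^n\setminus P$, and then $sT_M=sS_M\subseteq R_M$ (because $T_M=S_M$), so $sT\subseteq S'$ and thus $T_P\subseteq S'_P$. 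When $R\subset S$ is a flat epimorphism one argues at $P$, where $R_P=S_P\subset T_P$ is the sole nontrivial localization, obtaining the required $s$ from the explicit form of this minimal extension (Theorem \ref{minimal}, and the Ferrand-Olivier description in the Pr\"ufer case). Either way, $S'_P=T_P$.

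The stated conclusions now follow. From the localizations, $\mathrm{Supp}(S'/R)=\{P\}$ and $\mathrm{Supp}(T/S')=\{MS'\}$, where $MS'$ is the unique prime of $S'$ over $M$ and is maximal (as $S'_M=R_M$ is local); hence $R\subset S'$ is minimal with $\mathcal C(R,S')=P$, and $S'\subset T$ is minimal with $\mathcal C(S',T)=MS'$. The types are seen at the crucial ideals: $R_P\subset S'_P=T_P$ coincides with $S_P\subset T_P$, so $R\subset S'$ has the type of $S\subset T$; and $S'_M=R_M\subset T_M=S_M$ coincides with $R_M\subset S_M$, so $S'\subset T$ has the type of $R\subset S$. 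For the description of $[R,T]$: any $T'\in[R,T]$ satisfies $T'=\{t\in T\mid t/1\in T'_Q\text{ for all }Q\in\mathrm{Max}(R)\}$ (the ideal $\{r\in R\mid rt\in T'\}$ of $R$ lies in no maximal ideal once $t/1\in T'_Q$ for all $Q$, hence contains $1$), and $T'_Q=T_Q$ is forced for $Q\notin\{M,P\}$, so $T'$ is determined by $(T'_M,T'_P)\in[R_M,T_M]\times[R_P,T_P]=\{R_M,T_M\}\times\{R_P,T_P\}$; the four pairs $(R_M,R_P),(T_M,R_P),(R_M,T_P),(T_M,T_P)$ are realized by $R,S,S',T$ respectively, so $[R,T]=\{R,S,S',T\}$. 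Lastly $R_Q=S'_Q=T_Q$ for $Q\in\mathrm{Max}(R)\setminus\{M,P\}$ is part of the first step.

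The point I expect to be the main obstacle is precisely the equality $S'_P=T_P$, i.e.\ showing that $S'$ is ``large enough'' at $P$. This is transparent when $R\subset S$ is finite, but in the Pr\"ufer case the conductor $(R:S)$ may vanish, so one must pass to $R_P\subset T_P$ and use the classification of minimal extensions. An alternative organization is to establish once and for all the patching isomorphism $[R,T]\cong[R_M,T_M]\times[R_P,T_P]$ (valid since $M\neq P$ are comaximal and $R_Q=T_Q$ off $\{M,P\}$) and take $S'$ to be the intermediate ring with coordinates $(R_M,T_P)$; but the surjectivity of this patching map runs into the same finiteness issue, so a short split into the four type-combinations of $(R\subset S,\,S\subset T)$ seems hard to avoid.
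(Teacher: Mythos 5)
The paper itself offers no proof of this lemma: it is quoted verbatim from \cite[Lemma 2.7]{DPP2}, so the only internal ``proof'' is the citation. On its own merits, your architecture is the natural one and is largely sound: deducing $\mathrm{Supp}(T/R)=\{M,P\}$ from Proposition \ref{1.14}, concluding that $M$ and $P$ are distinct maximal ideals, defining $S'$ as the preimage of $R_M$ in $T$, computing the three localizations, and reading off minimality, crucial ideals, types and $[R,T]=\{R,S,S',T\}$ from the injectivity of $T'\mapsto (T'_Q)_Q$. The finite case of the key equality $S'_P=T_P$ is essentially right, with one small repair: $M^nS\subseteq R$ requires $M$ to be finitely generated, which is not given; it is cleaner to note that $S/R$ is a finite module with $V((R:S))=\mathrm{Supp}(S/R)=\{M\}$, so $(R:S)\not\subseteq P$, and any $s\in (R:S)\setminus P$ already satisfies $sT=sS\subseteq R\subseteq S'$ after localizing at $M$.

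The genuine gap is the Pr\"ufer sub-case of $S'_P=T_P$, and your sketch for it looks at the wrong prime. The condition to be verified is that each $t\in T$ admits $s\in R\setminus P$ with $st/1\in R_M$ inside $T_M=S_M$; this is a constraint \emph{at $M$}, governed by the structure of the flat epimorphism $R\subset S$, not by $R_P\subset T_P$ (which is where you propose to argue). A correct argument runs as follows: since $T_M=S_M$, there is $w\in R\setminus M$ with $\sigma:=wt\in S$; since $R\subseteq S$ is a flat epimorphism whose spectral image is exactly $\mathrm{Spec}(R)\setminus\{M\}$, for every $\sigma\in S$ the ideal $(R:_R\sigma)$ satisfies $(R:_R\sigma)S=S$, hence $V((R:_R\sigma))\subseteq\{M\}$ and in particular $(R:_R\sigma)\not\subseteq P$; choosing $s\in (R:_R\sigma)\setminus P$ gives $swt\in R$, whence $st/1=(swt/1)\,w^{-1}\in R_M$, i.e.\ $st\in S'$ with $s\notin P$, as required. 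Without this input (or an equivalent use of the Ferrand--Olivier description of minimal flat epimorphisms), the equality $S'_P=T_P$ --- and with it the existence of $S'$ with the stated properties --- remains unproved whenever $R\subset S$ is a Pr\"ufer minimal extension, which the statement of the lemma allows.
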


   The following material is needed for our study.
  
 \begin{definition}\label{1.3} An integral extension $R\subseteq S$ is called {\it infra-integral} \cite{Pic 2} 
  if all its residual extensions $\kappa_R(P)\to \kappa_S(Q)$ 
  (with $Q\in\mathrm {Spec}(S)$ and $P:=Q\cap R$)  
 are isomorphisms. 
 An extension $R\subseteq S$ is called {\it t-closed} (cf. \cite{Pic 2}) if the relations $b\in S,\ r\in R,\ b^2-rb\in R,\ b^3-rb^2\in R$ imply $b\in R$. The $t$-{\it closure} ${}_S^tR$ of $R$ in $S$ is the smallest element $B\in [R,S]$  such that $B\subseteq S$ is t-closed and the greatest element  $B'\in [R,S]$ such that $R\subseteq B'$ is infra-integral. 
 \end{definition}  
 
 Integral closures and  t-closures play an a crucial role in the sequel.

  The next proposition gives the link between 
   these notions 
   and  the minimal extensions involved.

  \begin{proposition}\label{1.31} 
  \cite[Lemma 3.1]{Pic 4} Let there be an integral extension $R\subset S$ admitting  a maximal chain $\mathcal C$ of $ R$-subextensions of $S$, defined by $R=R_0\subset\cdots\subset R_i\subset\cdots\subset R _n= S$, where each $R_i\subset R_{i+1}$ is  minimal.  The following statements hold: 

\begin{enumerate}
\item $R \subset S$ is t-closed if and only if  each $R_i\subset R_{i+1}$ is inert. 

\item  $R\subset S$ is  infra-integral if and only if each  $R_i\subset R_{i+1}$ is either ramified or  decomposed. 

\item If $R$ is field and $R \subset S$ is t-closed,  $S$ is a field.
\end{enumerate}
If  (1) 
holds, then $\mathrm {Spec}(S)\to\mathrm{Spec}(R)$ is bijective. \end{proposition}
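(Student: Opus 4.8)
The plan is to read off, for each minimal step $R_i\subset R_{i+1}$ of the given chain, whether its residual field extensions are trivial, and to match this against the trichotomy of Theorem~\ref{minimal}. Since $R\subset S$ is integral, every $R_i\subset R_{i+1}$ is an integral minimal extension, hence finite (Theorem~\ref{crucial}(1)) and of exactly one of the three types of Theorem~\ref{minimal}. The first thing I would record is the elementary remark that a finite minimal extension has all of its residual extensions trivial \emph{if and only if} it is decomposed or ramified: away from the crucial maximal ideal $M$ the extension is locally an equality by Theorem~\ref{crucial}(2), so the only residual extension that can fail to be an isomorphism sits over $M$, and there Theorem~\ref{minimal} exhibits a nontrivial minimal field extension in the inert case and isomorphisms of residue fields in the decomposed and ramified cases. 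By Definition~\ref{1.3} this says: a minimal step is infra-integral if and only if it is decomposed or ramified.

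For (2), I would use that residual extensions are functorial along towers: for $Q\in\mathrm{Spec}(S)$ the field extension $\kappa_R(Q\cap R)\to\kappa_S(Q)$ is the composite of the residual extensions of the successive $R_i\subset R_{i+1}$ at the contractions of $Q$, and a composite of field extensions is an isomorphism exactly when every factor is. Combining this with lying-over for the integral extensions $R_i\subset R_{i+1}$ — to realise every prime of every $R_i$ below a prime of $S$ — gives that $R\subseteq S$ is infra-integral if and only if each $R_i\subseteq R_{i+1}$ is, which by the previous paragraph means each step is decomposed or ramified. That is (2); the same functoriality also yields the stability of infra-integrality under passing to subextensions, which I shall use below.

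For (1), the implication ``all steps inert $\Rightarrow$ $R\subset S$ t-closed'' I would get from two elementary observations. First, t-closedness is transitive, directly from the relations of Definition~\ref{1.3}, so it suffices to treat a single inert minimal $R_i\subset R_{i+1}$. Second, if $b\in R_{i+1}$, $r\in R_i$ and $b^2-rb,\ b^3-rb^2\in R_i$, then reducing modulo the crucial ideal $M$ (which lies in $\mathrm{Max}(R_i)\cap\mathrm{Max}(R_{i+1})$ in the inert case) and using $b^3-rb^2=b(b^2-rb)$ inside the field $R_{i+1}/M$ forces the image of $b$ into $R_i/M$, whence $b\in R_i$ since $M\subseteq R_i$. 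For the converse, I would first note (from Paragraph~1) that a decomposed or ramified minimal extension $T\subset T'$ is never t-closed: it is infra-integral, so ${}_{T'}^tT=T'\neq T$; concretely, an element of $T'\setminus T$ that is idempotent modulo the conductor $(T:T')$ — in the decomposed case — or lies in the square-nilpotent maximal ideal of $T'$ — in the ramified case — satisfies $x^2-rx,\ x^3-rx^2\in(T:T')\subseteq T$ with $r=1$ or $r=0$ respectively. Now assume $R\subset S$ is t-closed, i.e.\ ${}_S^tR=R$, and induct on the length $n$ of the chain. The first step $R_0\subset R_1$ cannot be decomposed or ramified, for then $R_0\subset R_1$ would be infra-integral and hence $R_1\subseteq{}_S^tR_0=R_0$, absurd; so $R_0\subset R_1$ is inert. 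It then suffices to prove that $R_1\subseteq S$ is again t-closed, since the inductive hypothesis will then finish the chain $R_1\subset\cdots\subset R_n$. If $R_1\subseteq S$ were not t-closed, there would be a minimal decomposed-or-ramified $R_1\subset R_1'$ with $R_1'\subseteq{}_S^tR_1$; setting $M:=(R_0:R_1)$, $N:=(R_1:R_1')$ and $P:=N\cap R_0$, I would distinguish two cases. If $P=M$: because $R_0\subset R_1$ is inert the map $\mathrm{Spec}(R_1)\to\mathrm{Spec}(R_0)$ is injective (the fibre over $M$ reducing to the single point $M\in\mathrm{Max}(R_1)$), so $N=M$; then the witnessing element of the step $R_1\subset R_1'$ has its two relations with coefficients in $N=M\subseteq R_0$ yet lies outside $R_0$, contradicting t-closedness of $R\subset S$. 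If $P\neq M$ (so $P\not\subseteq M$, both being maximal in $R_0$), then the Crosswise Exchange Lemma~\ref{1.13} applied to $R_0\subset R_1\subset R_1'$ produces $R_1''\in[R_0,R_1']$ with $R_0\subset R_1''$ of the same (decomposed or ramified) type as $R_1\subset R_1'$; then $R_0\subset R_1''$ is infra-integral with $R_1''\neq R_0$, so $R_1''\subseteq{}_S^tR_0=R_0$, again absurd. I expect this two-case argument to be the main obstacle — in particular the point that the ``same crucial prime'' case is precisely where the explicit witnesses land inside $R_0$, so that the only genuine exchange of steps one needs is the one provided by Lemma~\ref{1.13}.

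Finally, (3) follows from (1): if $R$ is a field and $R\subset S$ is t-closed, then every step is inert, and an inert minimal extension of a field is a minimal field extension (its crucial ideal is $(0)$, maximal in both rings), so inductively each $R_i$, and in particular $S$, is a field. And when (1) holds, each inert step induces a bijection on spectra — the identity off $M$ by Theorem~\ref{crucial}(2), and a single point over $M$ since the fibre there reduces to the field $R_{i+1}/M$ — so $\mathrm{Spec}(S)\to\mathrm{Spec}(R)$, a composite of bijections, is bijective.
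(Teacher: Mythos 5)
Your proposal is correct, but it supplies far more than the paper does: for items (1)--(3) the paper simply quotes \cite[Lemma 3.1]{Pic 4} and proves only the final assertion, for which its one-line argument is exactly yours (off the crucial maximal ideal an inert step is a local equality by Theorem~\ref{crucial}(2), and over it the fibre is the single point given by the field $R_{i+1}/M$ of Theorem~\ref{minimal}(a)). Your self-contained treatment checks out: the residual-field dichotomy for the three types of finite minimal extensions, the composition-plus-lying-over argument for (2) and for the stability of infra-integrality under subextensions, the verification that an inert step is $t$-closed via $b^3-rb^2=b(b^2-rb)$ in the residue field $R_{i+1}/M$, and the two-case analysis for the converse of (1). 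In particular, in the equal-conductor case your explicit witnesses with $r\in\{0,1\}\subseteq R_0$ and both relations landing in $N=M\subseteq R_0$ are precisely what is needed to contradict $t$-closedness of $R\subseteq S$ itself rather than of $R_1\subseteq S$, and the unequal-conductor case is correctly dispatched by Crosswise Exchange (Lemma~\ref{1.13}) since two distinct maximal ideals of $R_0$ are incomparable. The one step you use without justification is the existence of a minimal (hence decomposed or ramified) subextension $R_1\subset R_1'$ of the nontrivial infra-integral extension $R_1\subset{}_S^tR_1$; this is a standard fact for module-finite extensions in this literature (replace ${}_S^tR_1$ by $R_1[x]$ for some $x\in{}_S^tR_1\setminus R_1$), but it deserves an explicit reference since the proposition's hypotheses only posit one finite maximal chain rather than FCP.
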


\begin{proof} The last result comes from Theorem \ref{minimal} and Theorem \ref{crucial}(2),  because they  imply the bijectivity  of $\mathrm {Spec}(R_{i+1})\to\mathrm{Spec}(R_i)$, when $R_i\subset R_{i+1}$ is 
 inert. 
\end{proof} 

 The following situation is often met when controlling  lattice properties of ring extensions. We say that a property $(\mathcal T)$ of  
  integral FCP $t$-closed extensions $R\subset S$ is {\it convenient} if the following conditions are equivalent.
\begin{enumerate}
\item $R\subset S$ satisfies $(\mathcal T)$. 

\item $R_M\subset S_M$ satisfies $(\mathcal T)$ for any $M\in\mathrm{MSupp}(S/R)$. 

\item  $R/I\subset S/I$ satisfies $(\mathcal T)$ for any  ideal $I$ shared by $R$ and $S$. 
\end{enumerate}

\begin{proposition}\label{1.33} Let $R\subset S$ be an integral  (hence finite) t-closed  FCP extension  and  $I:=(R:S)$. Then $V_R(I) = \mathrm{MSupp}(S/R)$ is finite 
and $Q\in \mathrm V_S(I)$ lying over $P$ in $R$ verifies $Q=PS$. 
Moreover,  if  $(\mathcal T)$ is a 
convenient
  property,  the following conditions are equivalent:

\begin{enumerate}

\item $R \subset S$ satisfies $(\mathcal T)$.

\item  The residual extensions $\kappa_R(P)\to \kappa_S(Q)$ satisfy $(\mathcal T)$
 for any  $Q\in\mathrm{V}_S(I)$ and $P:=Q\cap R$. 
 
 \item    $\kappa_R(P)\to \kappa_R(P)\otimes_RS$ satisfies $(\mathcal T)$
 for any  $P\in\mathrm{V}_R(I)$.
 
 \item  The field extension $ R/P\to S/Q$ satisfies $(\mathcal T)$
 for any  $Q\in\mathrm{V}_S(I)$ and $P:=Q\cap R$. 
\end{enumerate}
\end{proposition}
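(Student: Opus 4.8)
The plan is to handle first the two preliminary assertions, then the equivalences, which I would split as the easy chain (2) $\Leftrightarrow$ (3) $\Leftrightarrow$ (4) and the substantive equivalence (1) $\Leftrightarrow$ (2).

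For the preliminary part, note that $R\subset S$ being integral FCP makes $S$ a finite $R$-module, so $(R:S)=I$ annihilates $S/R$ and $\mathrm{Supp}(S/R)=\mathrm V_R(I)$. Fixing a maximal chain $R=R_0\subset\cdots\subset R_n=S$ of minimal extensions, Proposition \ref{1.31}(1) shows each step is inert, so each crucial ideal $M_i$ is maximal in $R_i$, hence $M_i\cap R$ is maximal in $R$ because $R\subseteq R_i$ is integral; with Proposition \ref{1.14} this yields $\mathrm V_R(I)=\mathrm{Supp}(S/R)=\mathrm{MSupp}(S/R)$, which is finite. For $Q=PS$ I would localize at $P$: by the final assertion of Proposition \ref{1.31} the map $\mathrm{Spec}(S)\to\mathrm{Spec}(R)$ is bijective, so $Q$ is the unique prime over $P$, it lies in $\mathrm V_S(I)$ and is maximal, and every ring occurring in a maximal chain from $R_P$ to $S_P$ is local (its spectrum maps bijectively onto $\mathrm{Spec}(R_P)$). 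Running up that chain of inert minimal steps, and using that the conductor of a finite inert minimal extension is the maximal ideal of the lower local ring and an ideal of the upper ring (Theorem \ref{minimal}(a)), an induction shows $\mathfrak m:=PR_P$ is an ideal of, and the maximal ideal of, every ring in the chain; in particular $\mathfrak m=(R_P:S_P)=PS_P=QS_P$. A routine check at the other maximal ideals of $S$ (where $P$ and $Q$ each generate the unit ideal) upgrades this to $PS=Q$.

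Now (2), (3) and (4) coincide: since $P,Q$ are maximal we have $\kappa_R(P)=R/P$ and $\kappa_S(Q)=S/Q$, while $\kappa_R(P)\otimes_RS=S/PS=S/Q$ by the previous step, so each of (2), (3), (4) asserts that the field extension $R/P\to S/Q$ satisfies $(\mathcal T)$ for every $P\in\mathrm V_R(I)$ (equivalently every $Q\in\mathrm V_S(I)$, the index sets being matched by the spectral bijection), and each such $R/P\to S/Q$ is again an integral FCP $t$-closed extension. For (1) $\Leftrightarrow$ (2), I would use the defining property of convenience that $R\subset S$ satisfies $(\mathcal T)$ iff $R_M\subset S_M$ does for every $M\in\mathrm{MSupp}(S/R)=\mathrm V_R(I)$, reducing everything to a fixed $M$. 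There, write $\mathfrak m:=(R_M:S_M)=MR_M=MS_M$, the common maximal ideal of $R_M$ and $S_M$ by the preliminary part; it is a shared ideal, and $R_M/\mathfrak m\subset S_M/\mathfrak m$ is precisely the residual extension $\kappa_R(M)\to\kappa_S(Q)$. Then (1) $\Rightarrow$ (2) is immediate from the other defining property of convenience, that $(\mathcal T)$ descends to quotients by shared ideals, applied with the shared ideal $\mathfrak m$. For (2) $\Rightarrow$ (1), since $\mathfrak m\subseteq R_M$ is an ideal of $S_M$ the rule $B\mapsto B/\mathfrak m$ is a poset isomorphism $[R_M,S_M]\xrightarrow{\sim}[\kappa_R(M),\kappa_S(Q)]$, with inverse the pullback along $S_M\twoheadrightarrow\kappa_S(Q)$, and this isomorphism carries each subextension $A\subset C$ to $A/\mathfrak m\subset C/\mathfrak m$ with matching conductors, so it transports $(\mathcal T)$ from the residual extension back to $R_M\subset S_M$.

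I expect the only real obstacle to be this last implication (2) $\Rightarrow$ (1): convenience by itself only propagates $(\mathcal T)$ \emph{downward} to shared quotients, so recovering $(\mathcal T)$ for $R_M\subset S_M$ from its single residual quotient truly depends on the structural coincidence $\mathfrak m=(R_M:S_M)=MS_M$, which collapses the whole interval $[R_M,S_M]$ onto $[\kappa_R(M),\kappa_S(Q)]$. Along the way I would also make explicit the (routine, via Proposition \ref{1.31}) fact that ``integral FCP $t$-closed'' is stable under localization and under passage to shared quotients, since this underlies several of the steps above.
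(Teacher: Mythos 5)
Your argument is correct and follows the paper's proof in all essentials: first $\mathrm V_R(I)=\mathrm{Supp}(S/R)=\mathrm{MSupp}(S/R)$ and $(R_P:S_P)=PR_P=PS_P=QS_P$, whence $Q=PS$ and $\kappa_S(Q)=S/Q=\kappa_R(P)\otimes_RS$ (giving (2)$\Leftrightarrow$(3)$\Leftrightarrow$(4)), then (1)$\Leftrightarrow$(2) by chaining the localization clause of convenience with the shared-ideal clause applied to the conductor $PR_P$; the only cosmetic difference is that you prove the local structure by induction along the chain of inert minimal steps where the paper invokes the Artinian-ness of $R/I$ and \cite[Lemma 3.17]{DPP3}. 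The one substantive point of divergence is your treatment of (2)$\Rightarrow$(1): in the paper ``convenient'' is used as a biconditional for each individual shared ideal (exactly as Corollary~\ref{1.014} records for catenarity, where ``for \emph{some} shared ideal $I$'' is listed as equivalent to condition (1)), so this direction is immediate from the definition and needs no patch. Your proposed substitute --- transporting $(\mathcal T)$ across the poset isomorphism $[R_M,S_M]\to[\kappa_R(M),\kappa_S(Q)]$ --- is the right structural picture but is not by itself a proof for an arbitrary abstract property $(\mathcal T)$, since nothing in the definition guarantees that $(\mathcal T)$ is determined by the poset of subextensions together with conductors; you should instead simply read the third clause of convenience as a per-ideal equivalence.
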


\begin{proof}  Since $R\subset S$ is an integral FCP  extension,   $R/I$ is an Artinian ring \cite[Theorem 4.2]{DPP2} and the extension is finite; so that,  $\mathrm{V}_R(I)=\mathrm{Supp}(S/R)= \mathrm{MSupp}(S/R)$, where the last equation is valid because $R/I$ is  Artinian.  
 Actually $I$ is an intersection of finitely many maximal ideals because the extension is seminormal \cite[Lemma 4.8]{DPP2}. 
Moreover, for each $P\in\mathrm{V}_R(I)$, there is a unique $Q\in\mathrm{V}_S(I)$ lying above $P$ by Proposition~\ref{1.31}(1). In particular, $S_Q=S_P$ and $\kappa_S(Q)=S_Q/QS_Q=S_P/QS_P=S_P/PR_P\ (*)$ because, $S_P=S_Q$ is a local ring with  maximal ideal $PR_P$ according to \cite[Lemma 3.17]{DPP3}.
 This shows that $PR_P=(R_P:S_P)=PS_P=QS_Q=QS_P$. Since $P_MS_M=Q_M=S_M$ for any $M\in\mathrm{Max}(R)\setminus\{P\}$, we get $PS=Q$.
 To conclude, $\kappa_S(Q)=\kappa_R(P)\otimes_RS$,  which gives  (2) $\Leftrightarrow$ (3). 

Because $(\mathcal T)$ is  convenient,  $R\subset S$ satisfies $(\mathcal T)$ if and only if   $R_M\subset S_M$ satisfies $(\mathcal T)$ for any $M\in\mathrm{MSupp}(S/R)$. We have just seen that $(R_M:S_M)=MR_M$, which is an ideal shared by $R_M$ and $S_M$. Fix $P\in\mathrm{MSupp}(S/R)$. Using the second equivalence characterizing a convenient property and the equalities of $(*)$, we get that $R_P\subset S_P$ satisfies $(\mathcal T)$ if and only if   $\kappa_R(P)=R_P/PR_P\subset S_P/PR_P=\kappa_S(Q)$ satisfies $(\mathcal T)$. Gathering these two equivalences, we get that  (1) is equivalent  (2).

 Moreover, since $R/P\cong R_P/PR_P=\kappa_R(P)$ and $S/Q\cong S_Q/QS_Q=\kappa_S(Q)$, we get (2) $\Leftrightarrow$ (4). 
\end{proof} 

This proposition shows that, in order to prove that a convenient  property $(\mathcal T)$ holds  for  integral t-closed FCP extensions, it is enough to establish that this property holds for  finite-dimensional field extensions.
  
\section {First properties of 
 catenarian extensions} 
 
 If $R\subseteq S$ is an FCP extension,  $[R,S]$ is a complete Noetherian Artinian lattice, with $R$ as the least element and $S$ as the largest element. We use lattice definitions and properties described in \cite{NO}.

In the context of a lattice $[R,S]$, some  definitions and properties of lattices have the following formulations.

An element $T$ of $[R,S]$ is an {\it atom}  if and only if $R\subset T$  is a minimal extension. 
 We denote by $\mathcal{A}$ the set of atoms of $[R,S]$. 
 
(a) Following Dobbs and Shapiro in \cite{DS2}, we say that an FCP extension $R\subseteq S$ is {\it catenarian or graded}  if all maximal chains between $R$ and $S$ have the same length (the Jordan-H\"older chain condition).
Actually,  in lattice theory \cite[Theorem 1.12 and the definition just before]{R}, a poset $(P,<)$ is called {\it graded} if there is a function $g:P\to\mathbb{Z}$ for which $a<b\Rightarrow g(a)<g(b)$, with $g(b)=g(a)+1$ if 
 $b$ covers $a$,
  which is equivalent, for an FCP extension to all maximal chain between two comparable elements have the same finite length. Therefore,   $R\subseteq S$ is catenarian if and only  if  $[R,S]$ is a graded lattice. 

(b) {\it distributive} if intersection and product are each distributive with respect to the other. Actually, each distributivity implies the other \cite[Exercise 5, page 33]{NO}.
 
   We begin by giving some simple examples. 

\begin{proposition} \label{1.04} A chained extension of finite length is catenarian.
\end{proposition}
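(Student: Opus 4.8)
The plan is to exploit the fact that a chained extension has, up to the obvious identification, only \emph{one} maximal chain, so that the Jordan--Hölder condition holds for trivial reasons. First I would unwind the two hypotheses: ``chained'' means that $[R,S]$ is totally ordered by inclusion, and ``of finite length'' means $\ell[R,S]<\infty$. Taken together these say that $[R,S]$ is a finite chain; in particular $R\subseteq S$ is an FCP extension, so the notion of being catenarian is meaningful here. (This is where ``finite length'' is genuinely used: a chained extension whose underlying chain is infinite would not be FCP and would not fall under the definition of catenarian.)

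Next I would fix a maximal chain $R=R_0\subset R_1\subset\cdots\subset R_n=S$, which exists with $n<\infty$ since the extension is FCP, each step $R_i\subset R_{i+1}$ being a minimal extension. The key step is to prove that $[R,S]=\{R_0,\ldots,R_n\}$. Indeed, let $T\in[R,S]$. By total ordering, $T$ is comparable with every $R_i$; let $j$ be the largest index with $R_j\subseteq T$. If $j=n$ then $T=S=R_n$; otherwise $R_{j+1}\not\subseteq T$, so $T\subseteq R_{j+1}$, and since $R_j\subset R_{j+1}$ is minimal we get $T=R_j$ or $T=R_{j+1}$. In all cases $T\in\{R_0,\ldots,R_n\}$.

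Finally, since $[R,S]$ is then a totally ordered set with exactly $n+1$ elements, any maximal chain from $R$ to $S$ must contain all of its elements and hence coincides with $R=R_0\subset\cdots\subset R_n=S$, of length $n$. Therefore all maximal chains between $R$ and $S$ have the same length $n=\ell[R,S]$, i.e.\ $R\subseteq S$ is catenarian. I do not expect any real obstacle in this argument; the only point requiring a little care is the bookkeeping in the claim $[R,S]=\{R_0,\ldots,R_n\}$, together with the observation that the finite-length hypothesis is what puts us in the FCP setting where catenarity is defined. (One could alternatively invoke that a chain is a distributive lattice and that distributive FCP extensions are catenarian, but the direct argument above is shorter and self-contained.)
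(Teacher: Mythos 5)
Your proof is correct and fills in exactly the reasoning the paper leaves implicit: the paper's own proof is just the word ``Obvious,'' and your unwinding (a finite totally ordered $[R,S]$ has a unique maximal chain, namely all of $[R,S]$) is the standard argument behind that claim. Nothing to object to.
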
 
 \begin{proof} Obvious.
 \end{proof}
 
 \begin{proposition} \label{1.0} \cite[Corollary 14, page 62]{G} A distributive lattice of finite length is catenarian.  
\end{proposition}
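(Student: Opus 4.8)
The plan is to reduce the statement to the classical Jordan--H\"older--Dedekind chain condition for modular lattices. Let $L$ be our distributive lattice, with least element $\hat 0$ and greatest element $\hat 1$. First I would observe that $L$ is modular: if $x\le z$, then $x\vee(y\wedge z)=(x\wedge z)\vee(y\wedge z)=(x\vee y)\wedge z$, using distributivity and $x\wedge z=x$. So it suffices to prove that a modular lattice of finite length is catenarian, and in fact I would aim for the slightly stronger conclusion that all maximal chains between any two comparable elements of $L$ have the same finite length; this at once yields a grading function and shows $L$ is graded in the sense of part (a) of the preceding discussion.

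The one substantial ingredient is the \emph{diamond transposition isomorphism}: for $a,b\in L$, the maps $x\mapsto x\vee b$ and $y\mapsto y\wedge a$ are mutually inverse order isomorphisms between the intervals $[a\wedge b,a]$ and $[b,a\vee b]$, modularity being exactly what forces the two composites to be the identity. The only consequence we need is that $a$ covers $a\wedge b$ if and only if $a\vee b$ covers $b$, since an order isomorphism of intervals carries two-element chains to two-element chains. I expect this lemma, rather than anything later, to be the real heart of the matter; everything after it is standard bookkeeping.

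With the transposition property in hand I would prove, by induction on $n$, the assertion $P(n)$: if an interval $[x,y]$ of $L$ admits a maximal chain of length $n$, then every maximal chain of $[x,y]$ has length $n$. The cases $n\le 1$ are immediate. For the inductive step, given maximal chains $x=x_0\lessdot\cdots\lessdot x_n=y$ and $x=z_0\lessdot\cdots\lessdot z_m=y$: if $x_1=z_1$ one applies $P(n-1)$ to $[x_1,y]$; if $x_1\ne z_1$, then $x_1\wedge z_1=x$ (because $x\lessdot x_1$), so by the transposition property $w:=x_1\vee z_1$ covers both $x_1$ and $z_1$. Choosing a maximal chain $w=w_0\lessdot\cdots\lessdot w_k=y$ and comparing, inside $[x_1,y]$, the chain $x_1\lessdot\cdots\lessdot x_n$ with $x_1\lessdot w\lessdot w_1\lessdot\cdots\lessdot w_k$ via $P(n-1)$ gives $k+1=n-1$; then $[z_1,y]$ carries the maximal chain $z_1\lessdot w\lessdot w_1\lessdot\cdots\lessdot w_k$ of length $n-1$, and $P(n-1)$ applied there yields $m-1=n-1$, i.e. $m=n$. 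This comparison step is the fussiest part to write out carefully.

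Finally, once $P(n)$ holds for all $n$, the integer $g(a):=\ell[\hat 0,a]$ is well defined on $L$; splicing a maximal chain from $\hat 0$ to $a$ with one from $a$ to $b$ (for $a<b$) shows $g(a)+\ell[a,b]=g(b)$, so $g$ is strictly increasing and satisfies $g(b)=g(a)+1$ whenever $b$ covers $a$. Hence $L$ is a graded lattice and, in particular, every maximal chain from $\hat 0$ to $\hat 1$ has length $g(\hat 1)$, which is exactly the assertion that $L$ is catenarian.
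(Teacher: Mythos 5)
Your proof is correct. The paper itself offers no argument for this proposition --- it simply cites Gr\"atzer \cite[Corollary 14, page 62]{G} --- and what you have written is precisely the classical Jordan--H\"older--Dedekind argument that the citation stands in for: distributivity gives modularity, modularity gives the diamond transposition isomorphism between $[a\wedge b,a]$ and $[b,a\vee b]$ (hence preservation of covers), and the induction on chain length then goes through exactly as you describe, the case $x_1\neq z_1$ being handled by passing to $w=x_1\vee z_1$ and comparing inside $[x_1,y]$ and $[z_1,y]$. So your proposal is a correct, self-contained substitute for the external reference rather than a genuinely different route; the only thing it ``buys'' is independence from \cite{G}, at the cost of proving a standard textbook fact in full.
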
 
 
  \begin{proposition} \label{1.4} Let $R\subseteq S$ be an FCP extension. Then, $R\subseteq S$ is catenarian  if and only if so is each subextension $T\subseteq U$ of $R\subseteq S$.
\end{proposition}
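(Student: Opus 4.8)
The \emph{if} direction is immediate: applying the hypothesis to the subextension $R\subseteq S$ itself already says that $R\subseteq S$ is catenarian. So the whole content is the \emph{only if} direction, and since for each $T\subseteq U$ in $[R,S]$ the interval $[T,U]$ is again FCP (a chain in $[T,U]$ is a chain in $[R,S]$, hence finite), the plan is to prove: if $R\subseteq S$ is catenarian and $R\subseteq T\subseteq U\subseteq S$, then all maximal chains of $[T,U]$ have the same (finite) length.

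The first step is the elementary observation that $[T,U]$ sits inside $[R,S]$ as a genuine order-interval, so that \emph{minimality of a step} and \emph{maximality of a chain} are absolute notions. Concretely, for $T\subseteq V\subset W\subseteq U$ the poset $[V,W]$ formed inside $[R,S]$ coincides with the one formed inside $[T,U]$; hence $V\subset W$ is a minimal extension when viewed in $[R,S]$ if and only if it is minimal when viewed in $[T,U]$, and, likewise, a chain from $T$ to $U$ is maximal in $[T,U]$ if and only if it is maximal (between $T$ and $U$) in $[R,S]$. Recall also that, in the FCP setting, a chain $R=R_0\subset\cdots\subset R_n=S$ is maximal precisely when every step $R_i\subset R_{i+1}$ is minimal.

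Next, using that $R\subseteq S$ (hence $[R,T]$ and $[U,S]$) has FCP, fix once and for all a maximal chain $\mathcal D$ of $[R,S]$ from $R$ to $T$ and a maximal chain $\mathcal E$ of $[R,S]$ from $U$ to $S$. Given an arbitrary maximal chain $\mathcal C$ of $[T,U]$, the juxtaposition $\mathcal D\cup\mathcal C\cup\mathcal E$ is a chain from $R$ to $S$ all of whose steps are minimal in $[R,S]$ (for $\mathcal C$ this uses the observation above), hence a maximal chain of $[R,S]$, of length $\ell(\mathcal D)+\ell(\mathcal C)+\ell(\mathcal E)$. Since $R\subseteq S$ is catenarian, this length equals $\ell[R,S]$ independently of $\mathcal C$; as $\ell(\mathcal D)$ and $\ell(\mathcal E)$ are fixed, $\ell(\mathcal C)$ is forced to take the same value for every maximal chain $\mathcal C$ of $[T,U]$, which is exactly the Jordan--H\"older property for $T\subseteq U$.

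There is no genuine obstacle here; the only points requiring a little care are the bookkeeping of the second paragraph (that an order-interval of $[R,S]$ is again of the form $[V,W]$, so minimality is intrinsic) and the appeal to FCP to produce the finite chains $\mathcal D$ and $\mathcal E$ to be prepended and appended. Alternatively, one may argue through the reformulation noted in (a): if $R\subseteq S$ is catenarian then $[R,S]$ is a graded lattice with rank function $g$, and the restriction of $g$ to $[T,U]$ (normalized by $g(T)$) is a rank function on $[T,U]$, because covering relations inside $[T,U]$ are exactly covering relations of $[R,S]$ lying in $[T,U]$; hence $[T,U]$ is graded, i.e. $T\subseteq U$ is catenarian. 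The chain argument above is essentially a direct proof of this fact in the present setting.
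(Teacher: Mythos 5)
Your argument is correct and is precisely the splicing argument the paper leaves implicit behind its one-word proof (``Obvious''): intervals $[T,U]$ of $[R,S]$ are genuine order-intervals in which minimality and maximality of chains are intrinsic, and prepending/appending fixed maximal chains of $[R,T]$ and $[U,S]$ forces all maximal chains of $[T,U]$ to have the same length. Nothing to correct.
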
 
 \begin{proof} Obvious.
 \end{proof}
 
\begin{proposition} \label{1.2} A  ring extension $R\subset S$ of length 2 is  catenarian.
\end{proposition}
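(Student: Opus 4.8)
The plan is to prove this directly from the definition of the length $\ell[R,S]$, without invoking any structure theorem. First I would record that an extension of length $2$ is automatically an FCP extension: every chain in $[R,S]$ has length at most $2$, hence is finite, so catenarity is meaningful here. Since $\ell[R,S]=2$ forces $R\neq S$, every maximal chain from $R$ to $S$ has length $\geq 1$, and by the very definition of $\ell[R,S]$ every maximal chain has length $\leq 2$.

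The one step that needs a word of justification is the exclusion of maximal chains of length $1$. A maximal chain of length $1$ is nothing but a covering relation $R\subset S$, i.e.\ $R\subset S$ minimal, equivalently $[R,S]=\{R,S\}$; but then $\ell[R,S]=1$, contradicting the hypothesis. Put otherwise, $\ell[R,S]=2$ produces an intermediate ring $R\subset T\subset S$, and the presence of $T$ prevents $R\subset S$ from being a maximal chain, since any chain refines to a maximal one. Hence every maximal chain from $R$ to $S$ has length $\geq 2$ as well, so every such chain has length exactly $2$. This is precisely the Jordan-H\"older property, so $R\subset S$ is catenarian.

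I do not anticipate a genuine obstacle: the proposition is essentially a tautology once the length is pinned to $2$, and the author's proof is likely a one-liner. The only place where a little care is needed is the case analysis just described --- distinguishing ``$R\subset S$ is itself minimal'' from ``$R\subset S$ admits a proper intermediate ring'' --- and this is immediate. If one preferred, one could instead invoke the fact recalled above that an FCP extension possesses a maximal chain of length $\ell[R,S]$ (cf.\ \cite{DPP3}) together with the description of maximal chains as juxtapositions of minimal extensions, but that machinery is overkill for length $2$.
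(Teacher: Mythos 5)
Your proof is correct and follows essentially the same route as the paper's one-line argument: since $\ell[R,S]=2$, every maximal chain has length at most $2$, and no maximal chain can have length $1$ because the extension $R\subset S$ is not minimal. The extra care you take in ruling out the length-$1$ case is exactly the content the paper compresses into ``any maximal chain of $R\subset S$ cannot be minimal.''
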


\begin{proof} Clear, because any maximal chain of $R\subset S$ cannot be minimal and so is of length 2.
\end{proof} 

  We are going to look 
   at 
  the stability of catenarity under  some usual constructions of commutative algebra. 

\begin{proposition}\label{5.12} Let $R \subseteq S$ be an  FCP extension, $J$ an ideal of  $S$ and  $I:=J\cap R$. 
 If $R \subseteq S$ is catenarian, so is $R/I\subseteq S/J$. 
\end{proposition}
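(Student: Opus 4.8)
The plan is to identify $[R/I,S/J]$ with a subextension of $[R,S]$ and then quote Proposition~\ref{1.4}. Let $\pi\colon S\to S/J$ be the canonical surjection. Since $I=J\cap R$ is the kernel of $\pi$ restricted to $R$, we have a canonical identification $R/I\cong\pi(R)$, and the one point that needs care is that $\pi(R)=(R+J)/J=\pi(R+J)$; that is, the bottom ring of the quotient extension is the image of $R+J$, not merely of $R$. So the relevant subextension of $R\subseteq S$ will be $R+J\subseteq S$.

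Next I would set up an order isomorphism $[R+J,S]\xrightarrow{\ \sim\ }[R/I,S/J]$. For $T\in[R+J,S]$, the set $\pi(T)=T/J$ is an $R/I$-subalgebra of $S/J$. Conversely, for $\overline T\in[R/I,S/J]$, the preimage $\pi^{-1}(\overline T)$ is a subring of $S$ which contains $J$ (because $0\in\overline T$, so $J=\pi^{-1}(0)\subseteq\pi^{-1}(\overline T)$) and contains $R$ (because $\pi^{-1}(\overline T)\supseteq\pi^{-1}(\pi(R))=R+J$); hence $\pi^{-1}(\overline T)\in[R+J,S]$. Since $J\subseteq T$ for every $T\in[R+J,S]$, the standard correspondence gives $\pi^{-1}(\pi(T))=T$ and $\pi(\pi^{-1}(\overline T))=\overline T$, and both assignments are clearly inclusion-preserving. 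Thus $T\mapsto T/J$ is an isomorphism of posets from $[R+J,S]$ onto $[R/I,S/J]$.

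Finally I would transfer the hypothesis. Because $R\subseteq S$ has FCP, every chain in $[R+J,S]\subseteq[R,S]$ is finite, so $R+J\subseteq S$ has FCP, and via the poset isomorphism so does $R/I\subseteq S/J$ (so that ``catenarian'' makes sense for the quotient). Since $R\subseteq S$ is catenarian, Proposition~\ref{1.4} applied to the subextension $R+J\subseteq S$ shows that $R+J\subseteq S$ is catenarian, i.e. $[R+J,S]$ is a graded lattice. Being graded is a property of the underlying poset, so it is carried by the isomorphism above to $[R/I,S/J]$; hence $R/I\subseteq S/J$ is catenarian.

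The argument is essentially formal, and the only genuine subtlety — the step I would be most careful with — is the identification of the bottom of the quotient extension with the image of $R+J$ (rather than of $R$), together with the verification that every intermediate algebra of the quotient is a full $\pi$-preimage; this is exactly where the inclusion $J\subseteq T$ is used, and it is what makes $[R/I,S/J]$ coincide with the subextension $[R+J,S]$ rather than with something larger.
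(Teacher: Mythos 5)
Your proposal is correct and follows essentially the same route as the paper: pass to the subextension $R+J\subseteq S$, use the lattice isomorphism $[R+J,S]\to[(R+J)/J,S/J]$ given by $T\mapsto T/J$ (valid because $J$ is an ideal shared by every $T\in[R+J,S]$), identify $(R+J)/J$ with $R/I$, and invoke Proposition~\ref{1.4}. You simply spell out the verification of the bijection in more detail than the paper does.
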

\begin{proof}
 Since catenarity still holds for any subextension, it  holds for $R+J\subseteq S$, and also for $(R+J)/J\subseteq S/J$, because of the lattice isomorphism $[R+J,S]\to [(R+J)/J, S/J]$ defined by $T\mapsto T/J$, since $J$ is a  ideal shared by any element of $[R+J,S]$. Now, $(R+J)/J\cong R/I$  gives the result.
\end{proof}

We remark that the converse may not hold. 
 Let $R \subseteq S$ be an  FCP extension, $J$ an ideal of  $S$ and  $I:=J\cap R$. If $R/I\subseteq S/J$ is catenarian, so is $R+J\subseteq S$, (use the lattice isomorphism of the proof of Proposition  \ref{5.12}.) But,   if $R\subseteq R+J$ 
is not catenarian, then 
   $R\subseteq S$ is not. 
 
  \begin{corollary}\label{1.014} Let $R\subseteq S$ be a ring  extension. The following statements are equivalent: 
\begin{enumerate}
\item $R\subseteq  S$ is catenarian.

\item $R/I\subseteq S/I$ is  catenarian  for each  ideal $I$ shared by $R$ and $S$.

\item  $R/I\subseteq S/I$ is  catenarian  for some  ideal $I$ shared by $R$ and $S$.
\end{enumerate}
\end{corollary}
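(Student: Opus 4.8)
The plan is to deduce Corollary \ref{1.014} directly from Proposition \ref{5.12} and the lattice isomorphism used in its proof. The key observation is that when $I$ is an ideal shared by $R$ and $S$, the map $T\mapsto T/I$ gives an order isomorphism $[R,S]\to[R/I,S/I]$, so a maximal chain in $[R,S]$ corresponds to a maximal chain of the same length in $[R/I,S/I]$, and vice versa; hence $R\subseteq S$ is catenarian if and only if $R/I\subseteq S/I$ is. This immediately yields (1) $\Leftrightarrow$ (2) and (1) $\Leftrightarrow$ (3) once we observe that $I=0$ is always an ideal shared by $R$ and $S$, so the family of such ideals is nonempty and includes the trivial case recovering $R\subseteq S$ itself.

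First I would spell out the implication (1) $\Rightarrow$ (2): this is exactly Proposition \ref{5.12} applied with $J=I$ (so that $J\cap R=I$ since $I\subseteq R$), giving that $R/I\subseteq S/I$ is catenarian for every shared ideal $I$. Next, (2) $\Rightarrow$ (3) is trivial because $I=0$ is a shared ideal, so the ``for each'' statement specializes to a ``for some'' statement. Finally, for (3) $\Rightarrow$ (1), I would invoke the lattice isomorphism $[R,S]\cong[R/I,S/I]$ noted above: if some shared ideal $I$ makes $R/I\subseteq S/I$ catenarian, then transporting maximal chains back through the inverse isomorphism $U\mapsto$ (preimage of $U$ in $S$) shows all maximal chains in $[R,S]$ have the same length, so $R\subseteq S$ is catenarian.

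The one technical point to handle carefully — and the closest thing to an obstacle — is the verification that $T\mapsto T/I$ is genuinely a bijection $[R,S]\to[R/I,S/I]$ preserving and reflecting the covering relation, so that maximal chains correspond to maximal chains of equal length. This is standard: since $I\subseteq T$ for every $T\in[R,S]$ (as $I\subseteq R\subseteq T$), the correspondence theorem for the ring $S$ gives an inclusion-preserving bijection between $R$-subalgebras of $S$ containing $I$ and $R/I$-subalgebras of $S/I$, with inverse induced by the quotient map; order isomorphisms automatically carry maximal chains to maximal chains and preserve their lengths. With that in hand the corollary is immediate, and no genuinely new argument beyond Proposition \ref{5.12} is required.
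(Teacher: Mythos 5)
Your proof is correct and follows essentially the same route as the paper: (1) $\Rightarrow$ (2) via Proposition \ref{5.12}, (2) $\Rightarrow$ (3) trivially (the shared ideal $I=0$ guarantees nonvacuity), and (3) $\Rightarrow$ (1) via the order isomorphism $[R,S]\to[R/I,S/I]$ induced by $T\mapsto T/I$. Your additional care in checking that this bijection preserves and reflects the covering relation is a welcome elaboration of what the paper leaves implicit, but it is not a different argument.
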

\begin{proof}  (1) $\Rightarrow$ (2) by Proposition \ref{5.12}. Obviously (2) $\Rightarrow$ (3) and (3) $\Rightarrow$ (1) because of the bijection $[R,S]\to [R/I,S/I]$. 
\end{proof}

 Recall that the idealization $R(+)M:=\{(r,m)\mid (r,m)\in R\times M\}$ is a commutative ring whose operations are defined as follows: 

$(r,m)+(s,n)=(r+s,m+n)$ \ \   and  \ \ \ $(r,m)(s,n)=(rs,rn+sm)$

Then  $(1,0)$ is the unit of $R(+)M$, and $R\subseteq R(+)M$ is a ring morphism defining $R(+)M$ as an $R$-module; so that we can identify any $r\in R$ with $(r,0)$. 

 \begin{proposition}\label{5.14} Let   $M$ be an $S$-module. Then an  FCP extension $R \subseteq S$  
  is catenarian if and only if so is
  $R(+)M\subseteq S(+)M$. 
\end{proposition}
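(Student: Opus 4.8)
The plan is to exhibit an explicit lattice isomorphism between $[R,S]$ and $[R(+)M, S(+)M]$, from which the equivalence follows at once since catenarity is a purely lattice-theoretic property (by the discussion preceding Proposition~\ref{1.04}, an FCP extension is catenarian exactly when its lattice is graded). The natural candidate is the map $\Phi\colon [R,S]\to [R(+)M,S(+)M]$ sending $T\mapsto T(+)M$. First I would check that $T(+)M$ is indeed an $R(+)M$-subalgebra of $S(+)M$ for every $T\in[R,S]$: it is an $R(+)M$-submodule because $M$ is an $S$-module hence a $T$-module, and it is closed under the idealization multiplication since $(t,m)(t',m')=(tt', tm'+t'm)$ lies in $T(+)M$. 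One should also note at the outset that $R(+)M\subseteq S(+)M$ is again an FCP extension; this is presumably recorded earlier or follows from the fact that $S(+)M$ is a finitely generated $R(+)M$-module when $S$ is module-finite over $R$ — but since the statement already assumes $R\subseteq S$ is FCP, the real content is the isomorphism of posets, which transports the FCP property automatically.

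The key step is surjectivity of $\Phi$, i.e. that every $U\in[R(+)M,S(+)M]$ has the form $T(+)M$ for some $T\in[R,S]$. Here I would argue that $U$ must contain $0(+)M$: indeed $0(+)M = (R(+)M)\cdot(0(+)M)$ is the image of the ideal $0(+)M$ of $R(+)M$, and since $M$ is an $S$-module, for any $(s,m')\in S(+)M$ we have $(s,m')(0,m)=(0,sm)\in U$ once $(0,m)\in U$; more directly, $0(+)M$ is generated as an $R(+)M$-module by elements of the form $(0,m)$, and each such element equals $(1,m)-(1,0)$ where $(1,m)\in S(+)M$... this needs a moment of care, so let me instead use the cleaner fact that $0(+)M = \sqrt{0}\cap(0(+)M)$ lies in the nilradical of $S(+)M$ and is in fact the kernel of the projection $S(+)M\to S$. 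The correct reasoning: the projection $\pi\colon S(+)M\to S$, $(s,m)\mapsto s$, restricts to $R(+)M\to R$, and $0(+)M=\ker\pi$ is an ideal of $S(+)M$ contained in (in fact equal to, after intersecting) an ideal shared by $R(+)M$ and... no — $0(+)M$ is an ideal of $S(+)M$ but its intersection with $R(+)M$ is $0(+)M$ again, so it is a shared ideal. Hence by Corollary~\ref{1.014} applied to $R(+)M\subseteq S(+)M$ with $I=0(+)M$, catenarity of $R(+)M\subseteq S(+)M$ is equivalent to catenarity of $(R(+)M)/I\subseteq (S(+)M)/I$, and the latter quotient extension is canonically $R\subseteq S$. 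That single invocation of Corollary~\ref{1.014} gives the entire equivalence in one stroke, provided one checks $0(+)M$ is shared.

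So the streamlined plan is: (i) verify $R(+)M\subseteq S(+)M$ is an FCP extension; (ii) observe $I:=0(+)M$ is an ideal of $S(+)M$ with $I\cap(R(+)M)=I$, hence a shared ideal; (iii) identify the canonical ring isomorphisms $(R(+)M)/I\cong R$ and $(S(+)M)/I\cong S$, compatible with the inclusions; (iv) apply Corollary~\ref{1.014} to conclude. The main obstacle I anticipate is step (i): one must know that idealization preserves FCP. If that is not available as a prior citation, the fallback is to prove surjectivity of $\Phi\colon T\mapsto T(+)M$ by hand — given $U\in[R(+)M,S(+)M]$, set $T:=\pi(U)\in[R,S]$ and show $U=T(+)M$, the nontrivial inclusion being $T(+)M\subseteq U$, which follows once $0(+)M\subseteq U$; and $0(+)M\subseteq U$ holds because for $(t,m)\in U$ with $t\in T$ we have $(t,m)-(t,0)=(0,m)$, and $(t,0)\in U$? — this circularity is exactly why routing through Corollary~\ref{1.014} is preferable, since there $0(+)M\subseteq U$ is automatic: any $U$ in $[R(+)M,S(+)M]$ contains $R(+)M\supseteq R(+)0$, but we need $(0,m)$, which is $(0,m)=(1,m)\cdot(0,0)$... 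In the end I would simply cite or quickly prove that $0(+)M$ is contained in every element of $[R(+)M,S(+)M]$ because $0(+)M$ is a nil ideal equal to the conductor-adjacent ideal and $(0,m)=(0,m)(1,0)$ lies in $R(+)M$ itself — indeed $0(+)M\subseteq R(+)M$! That trivializes everything: $0(+)M$ is an ideal of $S(+)M$ that happens to sit inside the bottom ring $R(+)M$, so it is shared, and $0(+)M\subseteq U$ for all $U$. Then Corollary~\ref{1.014} finishes the proof.
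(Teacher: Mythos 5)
Your final argument is correct, and it takes a slightly different (and in fact more economical) route than the paper. The paper sets $C:=(R:S)$, observes that $C(+)M=(R(+)M:S(+)M)$ is an ideal shared by $R(+)M$ and $S(+)M$, identifies $(R(+)M)/(C(+)M)\cong R/C$ and $(S(+)M)/(C(+)M)\cong S/C$, and then applies Corollary~\ref{1.014} twice --- once to each extension --- to reduce both sides of the equivalence to the catenarity of $R/C\subseteq S/C$. You instead take the shared ideal to be $I:=0(+)M$ itself: it is an ideal of $S(+)M$ (since $(s,m')(0,m)=(0,sm)$) that already sits inside $R(+)M$, so it is shared, and the quotients are canonically $R$ and $S$; a single application of Corollary~\ref{1.014} then finishes the proof. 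This avoids computing the conductor altogether and is the cleaner reduction; both proofs rest on exactly the same lattice isomorphism $[A,B]\to[A/I,B/I]$ underlying Corollary~\ref{1.014}. Two remarks on presentation: the worry about whether $0(+)M\subseteq U$ for every $U\in[R(+)M,S(+)M]$ is vacuous from the start, since $0(+)M\subseteq R(+)M\subseteq U$, so the several paragraphs of false starts (the attempted element computations, the detour through $\Phi\colon T\mapsto T(+)M$ and its surjectivity) should be deleted; and the FCP transfer in step (i) needs no separate verification, since the order isomorphism $[R(+)M,S(+)M]\to[R,S]$ furnished by the shared ideal carries chains to chains and hence transports FCP, as you note.
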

\begin{proof}  Since $M$ is an $S$-module, it is also an  $R$-module, and there is  a ring extension $R(+)M\subseteq S(+)M$. Set $C:=(R:S)$. Obviously, we have $C(+)M=(R(+)M:S(+)M)$. Since $(R(+)M)/(C(+)M)\cong R/C$ and $(S(+)M)/(C(+)M)\cong S/C$, the following holds: $R(+)M\subseteq S(+)M$ 
 is catenarian 
$\Leftrightarrow (R(+)M)/(C(+)M)\subseteq (S(+)M)/(C(+)M)$ 
 is catenarian 
$\Leftrightarrow R/C\subseteq S/C$ 
 is catenarian 
$\Leftrightarrow R\subseteq S$ 
 is catenarian   by  
 Corollary \ref{1.014}.
 \end{proof}
 
The next Proposition is needed in  the last example of this section.  

 \begin{proposition}\label{5.13} \cite[Lemma III.3]{DMPP} and \cite[Proposition 3.7(d)]{DPP2} Let $R \subseteq S$ be a ring extension, where $R=\prod_{i=1}^n R_i$ is a  cartesian product of rings. For each $i\in\mathbb N_n$, there are ring extensions $R_i\subseteq S_i$ such that $S= \prod_{i=1}^n S_i$.  
  Each $T\in[R,S]$ is of the form $T=\prod_{i=1}^n T_i$, where $T_i\in[R_i,S_i]$ for each $i\in\mathbb N_n$.
  Moreover  $R \subseteq S$ has FCP
  if and only if 
so has $R_i \subseteq S_i$ for each $i\in\mathbb N_n$. \end{proposition}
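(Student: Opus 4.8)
The statement to prove is Proposition \ref{5.13}, which concerns ring extensions $R \subseteq S$ where $R = \prod_{i=1}^n R_i$ is a finite product of rings. We must show: (i) $S$ decomposes correspondingly as $\prod_{i=1}^n S_i$ with $R_i \subseteq S_i$; (ii) every $T \in [R,S]$ has the form $\prod T_i$ with $T_i \in [R_i, S_i]$; and (iii) the FCP property is equivalent across the product and the factors. Note that the author cites this as coming from earlier literature, so the expected proof is a reduction to standard idempotent arguments.

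\begin{proof}[Proof proposal]
The plan is to exploit the idempotent decomposition of $R$. Let $e_1, \dots, e_n$ be the standard complete orthogonal set of idempotents of $R = \prod_{i=1}^n R_i$, so that $R_i \cong e_i R$ and $1 = e_1 + \cdots + e_n$. Since $R \subseteq S$ is a ring extension, the $e_i$ are also idempotents of $S$, still complete and orthogonal, hence $S = \prod_{i=1}^n e_i S$; set $S_i := e_i S$. The multiplication-by-$e_i$ map identifies $R_i$ with a subring of $S_i$, giving the ring extensions $R_i \subseteq S_i$ of part (i). For part (ii), given $T \in [R, S]$, each $e_i \in R \subseteq T$, so $T = \prod_{i=1}^n e_i T$ with $e_i T \in [e_i R, e_i S] = [R_i, S_i]$; conversely any such product is an $R$-subalgebra. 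This establishes the poset isomorphism $[R,S] \cong \prod_{i=1}^n [R_i, S_i]$, ordered componentwise.

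For part (iii), I would argue via this poset isomorphism. A chain in $[R,S]$ corresponds to a chain in the product poset, and conversely. If each $R_i \subseteq S_i$ has FCP, then each $[R_i, S_i]$ has finite length $\ell_i$, and any chain in $\prod [R_i, S_i]$ has length at most $\sum_i \ell_i$ (a strict inequality in the product forces a strict inequality in at least one coordinate), so $[R,S]$ has FCP. Conversely, if $R \subseteq S$ has FCP, then for each fixed $i$ the subposet of $[R,S]$ consisting of tuples that equal $R_j$ in every coordinate $j \neq i$ is isomorphic to $[R_i, S_i]$; an infinite chain in $[R_i, S_i]$ would lift to an infinite chain in $[R,S]$, a contradiction. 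Hence each $R_i \subseteq S_i$ has FCP.

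The arguments here are all routine manipulations with idempotents and the product poset; there is no genuine obstacle. The one point deserving a little care is the claim that membership of the idempotents $e_i$ in every intermediate ring $T$ forces the full product decomposition of $T$ — this is where the finiteness of the index set $n$ and the fact that we work with unital subrings (so that $T$ contains all of $R$, in particular all $e_i$) are used. Since the cited sources \cite{DMPP} and \cite{DPP2} already record this, a brief indication of the idempotent argument suffices.
\end{proof}
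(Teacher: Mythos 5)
The paper gives no proof of Proposition \ref{5.13}: it is quoted verbatim from \cite{DMPP} and \cite{DPP2}, and your idempotent decomposition is exactly the standard argument those sources use, so your write-up is correct and fills in what the paper leaves implicit. The only point worth noting is that your forward FCP implication invokes the fact that FCP forces $\ell[R_i,S_i]<\infty$ (true, but itself a theorem, \cite[Theorem 4.11]{DPP3}); this can be avoided by observing directly that an infinite chain in $\prod_i[R_i,S_i]$ injects into the product of its coordinate projections, so at least one projection must be an infinite chain in some $[R_i,S_i]$.
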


\begin{proposition}\label{3.9} Let $R \subseteq S$ be an  FCP extension, where $R=\prod_{i\in\mathbb N_n} R_i$ is a cartesian  product of rings. For each $i\in\mathbb N_n$, there exists FCP ring extensions $R_i\subseteq S_i$ such that $S= \prod_{i\in\mathbb N_n} S_i$. Moreover $R \subseteq S$ is catenarian  if and only if so is  $R_i\subseteq S_i$ for each $i\in\mathbb N_n$. If these conditions hold, then $\ell[R,S]=\sum_{i\in\mathbb N_n}\ell[R_i,S_i]$.
\end{proposition}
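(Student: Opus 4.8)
The plan is to reduce everything to Proposition~\ref{5.13}, which already tells us that $S=\prod_{i\in\mathbb N_n}S_i$ with each $R_i\subseteq S_i$ having FCP, and that the lattice $[R,S]$ is the direct product of the lattices $[R_i,S_i]$ via $T\mapsto(T_1,\ldots,T_n)$. So the real content is purely lattice-theoretic: the product of finitely many Artinian--Noetherian lattices is graded if and only if each factor is graded, and in that case the rank function is additive. I would first record this as the core observation, then translate it back through the lattice isomorphism and the dictionary ``atom = minimal extension'' / ``covering = minimal subextension'' established at the start of Section~3.

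\medskip

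\textbf{Key steps.} First, I would note that a covering relation $T\lessdot T'$ in $[R,S]$ corresponds, under the isomorphism of Proposition~\ref{5.13}, to a tuple $(T_1,\ldots,T_n)\lessdot(T_1',\ldots,T_n')$ in the product lattice, and that this happens exactly when $T_j\lessdot T_j'$ for a single index $j$ and $T_i=T_i'$ for all $i\neq j$; i.e.\ a minimal extension in $[R,S]$ is ``supported'' at one coordinate. Second, for the ``if'' direction, assume each $R_i\subseteq S_i$ is catenarian, so each $[R_i,S_i]$ carries a rank function $g_i$ with $g_i(S_i)=\ell[R_i,S_i]$; then $g:=\sum_i g_i\circ\mathrm{pr}_i$ is strictly increasing along $\subset$ and increases by exactly $1$ across each covering (using the single-coordinate description above), so $[R,S]$ is graded, which is catenarian by the discussion in part~(a) of Section~3. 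Evaluating $g$ at $S$ gives $\ell[R,S]=\sum_{i\in\mathbb N_n}\ell[R_i,S_i]$. Third, for the ``only if'' direction, suppose $R\subseteq S$ is catenarian and fix an index $j$; any subextension of $R_j\subseteq S_j$ embeds as a subextension of $R\subseteq S$ by taking $R_i=S_i$ (or any fixed intermediate ring) in the other coordinates, and catenarity passes to subextensions by Proposition~\ref{1.4}, so $R_j\subseteq S_j$ is catenarian. Alternatively, one can argue directly that two maximal chains in $[R_j,S_j]$ of different lengths would, after padding with fixed maximal chains in the other $[R_i,S_i]$, yield two maximal chains in $[R,S]$ of different lengths.

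\medskip

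\textbf{Main obstacle.} The one point that needs genuine care, rather than invocation of Proposition~\ref{5.13}, is the single-coordinate characterization of covering relations in the product lattice --- specifically, that a minimal extension of $R=\prod R_i$ inside $S=\prod S_i$ is minimal in exactly one factor and trivial in the rest. This is intuitively clear and follows from the componentwise description of $[R,S]$, but it is what makes the rank function $g=\sum g_i$ jump by exactly $1$ (not more) across each cover, which is the heart of gradedness; I would state it explicitly and verify it from Proposition~\ref{5.13} before assembling the two implications. Everything else --- strict monotonicity of $g$, the value $g(S)=\sum\ell[R_i,S_i]$, and the descent of catenarity to the factors --- is then routine.
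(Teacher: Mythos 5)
Your proposal is correct and follows essentially the same route as the paper: both directions rest on the componentwise description of $[R,S]$ from Proposition~\ref{5.13} together with the observation that a cover in the product lattice is a cover in exactly one coordinate (the ``main obstacle'' you isolate is precisely the point the paper labors over), and your ``only if'' direction via embedding $[R_j,S_j]$ into $[R,S]$ and invoking Proposition~\ref{1.4} is the paper's argument verbatim. The only cosmetic difference is that you package the ``if'' direction as a rank function $g=\sum_i g_i\circ\mathrm{pr}_i$ rather than directly decomposing a maximal chain of $[R,S]$ into maximal chains of the factors and summing their lengths, which is what the paper does; the content and the length formula $\ell[R,S]=\sum_{i\in\mathbb N_n}\ell[R_i,S_i]$ come out the same either way.
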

\begin{proof} The first part of the statement  is Proposition \ref{5.13}. Set $r:=\ell[R,S]$ and  $r_i:=\ell[R_i,S_i]$.   
 
 Assume  that $R \subseteq S$ is catenarian, and so is any subextension of $R\subset S$. Let $\mathcal C_i:=\{T_{i,j}\}_{j\in \mathbb N_s}$ be a maximal chain of $[R_i,S_i]$. Using the  construction of Proposition \ref{5.13} and setting $T_j:=\prod_{k\in\mathbb N_n} T_{k,j}$ such that $T_{k,j}=R_k$  for $k\neq i$ and $j\in \mathbb N_s$, we get a maximal chain  $\mathcal C:=\{T_j\}_{j\in \mathbb N_s}$ of $[R,T_s]$. Since $R\subset T_s$ is catenarian (of length $s$), any maximal chain of $[R_i,S_i]$ leads, by this construction, to a maximal chain of $[R,T_s]$ of length $s$, and so is of length $s=r_i$. Then, $R_i \subseteq S_i$ is catenarian. 

Conversely, assume that $R_i\subseteq S_i$  is catenarian for each $i\in\mathbb N_n$. Let $\mathcal C:=\{T_j\}_{j\in\mathbb N_r}$ be a maximal chain of $[R,S]$ with $T_{j-1}\subset T_j$  minimal for each $j\in\mathbb N_r$. For each $j\in\mathbb N_r$, set $T_j:=\prod_{i\in\mathbb N_n} T_{i,j}$. For a given $j\in\mathbb N_r$, since $T_{j-1}\subset T_j$ is minimal, there exists  a unique $i_j\in\mathbb N_n$ such that $T_{i_j,j-1}\subset T_{i_j,j}$ is  minimal and $T_{k,j-1}= T_{k,j}$ for any $k\in\mathbb N_n\setminus\{i_j\}$. Then, $\mathcal C_{i_j}:=\{T_{i_j,j}\}_{j\in\mathbb N_r}$ is a maximal chain of $[R_{i_j},S_{i_j}]$ with $T_{i_j,j-1}\subset T_{i_j,j}$   minimal and either $T_{i_j,l-1}= T_{i_j,l}$, or $T_{i_j,l-1}\subset T_{i_j,l}$ minimal for $l\neq j$. Now, 
looking at the elements of $\mathcal C$ and of the $\mathcal C_i$'s, we get that each minimal extension in $\mathcal C$ comes from only one minimal extension in some $\mathcal C_{i_j}$ such that the corresponding $j$th extensions in the $\mathcal C_k$'s for $k\neq i_j$ are trivial. Conversely, fix some $i\in\mathbb N_n$ and  let $T_{i,j}\subset T_{i,k}$ be a  minimal extension  in $\mathcal C_i$ with $j<k$. If $k=j+1$, then $T_{i,j}\subset T_{i,j+1}$ comes from $T_j\subset T_{j+1}$ which is minimal. If $k>j+1$, then $T_j\subset T_k$ is not minimal. But there exists $j'\in\{j,\ldots,k-1\}$ such that $T_{i,j}= T_{i,j'}\subset T_{i,j'+1}= T_{i,k}$, so that $T_{i,j'}\subset T_{i,j'+1}$ comes from $T_{j'}\subset T_{j'+1}$ which is still minimal. Any $T_j\in[R,S]$ can be uniquely expressed as a product $T_{1,j}\times\cdots\times T_{n,j}$ where $T_{i,j}\in[R_i,S_i]$ for each $i \in\mathbb N_n$. Since any minimal extension $T_j\subset T_{j+1}$ comes from a unique minimal extension $T_{i_j,j}\subset T_{i_j,j+1}$ corresponding to  a unique $i_j$, we get that  $\ell(\mathcal C)=\sum_{i\in\mathbb N_n}\ell(\mathcal C_i)=\sum_{i\in\mathbb N_n}r_i$ is a constant and $R\subseteq S$  is catenarian. 

If these conditions hold, then $\ell[R,S]=\sum_{i\in\mathbb N_n}\ell[R_i,S_i]$ (see \cite[Proposition 3.7 (d)]{DPP2}). In fact, there was a misprint in the quoted reference, where one should replace $\Pi$ with $\sum$ 
 in the length formula.
\end{proof} 
 
 \section{Characterization of
 catenarian ring extensions}

The aim of this section is to characterize catenarian ring extensions. Some special extensions 
are catenarian.  
 
 \begin{proposition}\label{3.1} A  Pr\"ufer FCP extension is catenarian. 
\end{proposition}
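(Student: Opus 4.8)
The plan is to reduce to the local case and then invoke the known structure of Prüfer FCP extensions. Recall that $R\subseteq S$ being Prüfer means every $T\in[R,S]$ gives a flat epimorphism $R\subseteq T$; and for an FCP Prüfer extension every minimal step $R_i\subset R_{i+1}$ in any maximal chain is a Prüfer minimal extension (a flat epimorphism of finite type), in particular not integral. First I would observe, using Theorem~\ref{crucial}(2) and Proposition~\ref{1.14}, that for an FCP extension $\mathrm{Supp}(S/R)=\{M_0,\dots,M_{k-1}\}$ is a finite set of maximal ideals of $R$, and that localization at each $M\in\mathrm{Supp}(S/R)$ is a faithfully exact operation on $[R,S]$ in the sense that it sends maximal chains to maximal chains. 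The key point is that a Prüfer minimal extension $R_i\subset R_{i+1}$ has a single crucial maximal ideal $M_i\cap R$, and that after localizing at a fixed $M$, a minimal step survives (stays minimal) iff its crucial ideal contracts to $M$, and collapses to an equality otherwise. So the length of the image in $[R_M,S_M]$ of a maximal chain $\mathcal C$ of $[R,S]$ equals the number of steps of $\mathcal C$ whose crucial ideal lies over $M$.

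Next I would handle the local case: assume $(R,M)$ is local and $R\subseteq S$ is a Prüfer FCP extension. Here I claim every maximal chain from $R$ to $S$ has the same length, namely $\ell[R,S]$. The cleanest route is to use that a Prüfer extension is a normal pair, so $[R,S]$ is a \emph{distributive} lattice — this is classical for normal pairs (intermediate rings of a Prüfer extension form a distributive lattice, indeed one can realize them via the ideal lattice / via localizations). Granting distributivity, Proposition~\ref{1.0} immediately yields that $R\subseteq S$ is catenarian. Alternatively, if one prefers not to quote distributivity of normal pairs, one argues directly: over a local ring a Prüfer FCP extension is chained, or at least its maximal chains have constant length because each intermediate ring is determined by a finitely generated (in fact principal, in the valuation-pair picture) piece of data, and minimal steps correspond bijectively to the "atoms" of that combinatorial gadget.

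Finally I would globalize. Let $\mathcal C: R=R_0\subset\cdots\subset R_n=S$ and $\mathcal C': R=R_0'\subset\cdots\subset R_m'=S$ be two maximal chains. For each $M\in\mathrm{Supp}(S/R)$, localizing gives maximal chains of $[R_M,S_M]$ of lengths equal to the number of steps of $\mathcal C$ (resp. $\mathcal C'$) with crucial ideal over $M$; by the local case these two numbers coincide, call it $d_M$. Since every step of $\mathcal C$ and of $\mathcal C'$ has its crucial ideal over \emph{some} $M\in\mathrm{Supp}(S/R)$ (Proposition~\ref{1.14}), summing gives $n=\sum_{M}d_M=m$. Hence $R\subseteq S$ is catenarian, and in fact $\ell[R,S]=\sum_{M\in\mathrm{Supp}(S/R)}\ell[R_M,S_M]$.

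The main obstacle I anticipate is the local step: one must be sure that a \emph{local} Prüfer FCP extension really does have the Jordan–Hölder property. The distributivity-of-normal-pairs argument disposes of this cleanly, so the real work is just citing (or reproving) that $[R,S]$ is distributive when $R\subseteq S$ is Prüfer — everything else is the standard localization bookkeeping via Theorem~\ref{crucial}(2) and Proposition~\ref{1.14} that I sketched above. A secondary subtlety is checking that localization at $M\in\mathrm{Supp}(S/R)$ sends a maximal chain to a maximal chain (no hidden intermediate rings appear downstairs that don't lift), which follows because each localized step is either trivial or minimal and minimality is detected locally.
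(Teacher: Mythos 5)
The paper's own proof is a one-line citation of the structure theory of integrally closed FCP extensions (\cite[Theorem 6.3 and Proposition 6.12]{DPP2}), so your attempt to rebuild the argument from scratch is legitimate in principle; but as written it has a genuine gap in the globalization step, and its local step rests on an unproved assertion. On the local step: the claim that $[R,S]$ is distributive for a normal pair is exactly the content you would need to prove (or cite precisely); note also that if you invoke distributivity of $[R,S]$ for an arbitrary FCP Pr\"ufer extension, Proposition~\ref{1.0} finishes the proof immediately and all of your localization bookkeeping becomes superfluous. So either that ``classical'' fact carries the whole proof by itself, or you must make the localization argument work --- and it does not.

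The globalization is where the argument actually fails. For a non-integral minimal step $R_i\subset R_{i+1}$, the crucial maximal ideal $M_i$ lives in $\mathrm{Max}(R_i)$, and $M_i\cap R$ need \emph{not} be maximal in $R$; the step then survives localization at \emph{every} prime of $R$ containing $M_i\cap R$, not only at one maximal ideal. Hence your claim ``a minimal step survives localization at $M$ iff its crucial ideal contracts to $M$'' is false, $\mathrm{Supp}(S/R)$ need not consist of maximal ideals, and the identity $n=\sum_M d_M$ breaks down because a single step can be counted at several maximal ideals. Concretely, let $R$ be a B\'ezout domain with quotient field $K$ and $\mathrm{Spec}(R)=\{0,P,M,M'\}$ where $P\subset M$ and $P\subset M'$ (such $R$ exists, e.g.\ as an intersection of two rank-two valuation domains sharing the same rank-one localization). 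Then $[R,K]=\{R,R_M,R_{M'},R_P,K\}$, every maximal chain has length $3$, but $\ell[R_M,K]=\ell[R_{M'},K]=2$, so $\sum_{M\in\mathrm{MSupp}(K/R)}\ell[R_M,K_M]=4\neq 3$: the step $R_P\subset K$, whose crucial ideal contracts to the non-maximal prime $P$, survives localization at both $M$ and $M'$. This also refutes your final formula $\ell[R,S]=\sum_{M}\ell[R_M,S_M]$ for Pr\"ufer extensions --- which is precisely why the paper states that formula only in the integral case (Proposition~\ref{3.3}(3)) and proves the localization criterion for general FCP extensions by first reducing to $R\subseteq\overline R$ via Theorem~\ref{3.2} rather than by direct chain counting. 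To repair your route you would need either a correct proof (or citation) of distributivity of $[R,S]$ for FCP normal pairs, or the actual structure results of \cite{DPP2} that the paper relies on.
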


\begin{proof}  Use \cite[Theorem 6.3 and Proposition 6.12]{DPP2}.
\end{proof} 

The following Theorem reduces the study to the charcterization of integral catenarian extensions.  

\begin{theorem}\label{3.2} An FCP extension $R\subseteq S$  is catenarian if and only if $R\subseteq \overline R$  is catenarian.
\end{theorem}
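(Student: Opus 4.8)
The plan is to factor an arbitrary FCP extension $R\subseteq S$ through its integral closure as $R\subseteq\overline R\subseteq S$, where $\overline R\subseteq S$ is a Pr\"ufer (hence flat epimorphic) FCP extension, and to exploit the well-known structural fact that in an FCP extension every maximal chain from $R$ to $S$ can be reorganized so that all integral steps come first and all Pr\"ufer-minimal steps come last. Concretely, if $R=R_0\subset R_1\subset\cdots\subset R_n=S$ is a maximal chain, then Theorem \ref{crucial}(1) classifies each $R_i\subset R_{i+1}$ as integral or a flat epimorphism, and using the Crosswise Exchange (Lemma \ref{1.13}) one can swap an integral step sitting immediately above a flat-epimorphic step, since such steps have incomparable crucial ideals in the relevant situations (an integral minimal extension cannot be a flat epimorphism, and the two kinds of steps are ``transverse''). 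Iterating, one transforms the chain into one passing through $\overline R$: $R=R_0\subset\cdots\subset R_k=\overline R\subset\cdots\subset R_n=S$, without changing its length $n$.

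Granting this, the forward implication is immediate from Proposition \ref{1.4}: if $R\subseteq S$ is catenarian then so is the subextension $R\subseteq\overline R$. For the converse, suppose $R\subseteq\overline R$ is catenarian, say of length $p$. By Proposition \ref{3.1} the Pr\"ufer extension $\overline R\subseteq S$ is catenarian; let its length be $q$. Now take any maximal chain $\mathcal C$ of $[R,S]$; by the reorganization argument above it can be rearranged into a maximal chain of the same length $\ell(\mathcal C)$ that factors through $\overline R$, giving a maximal chain of $[R,\overline R]$ followed by a maximal chain of $[\overline R,S]$. The first has length $p$ (catenarity of $R\subseteq\overline R$) and the second has length $q$ (catenarity of $\overline R\subseteq S$), so $\ell(\mathcal C)=p+q$ is independent of $\mathcal C$. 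Hence $R\subseteq S$ is catenarian, with $\ell[R,S]=\ell[R,\overline R]+\ell[\overline R,S]$.

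The main obstacle is the reorganization step: one must verify carefully that any maximal chain can be pushed through $\overline R$ without changing its length. The cleanest route is to argue that the position at which a maximal chain crosses $\overline R$ can always be made to be after exactly the integral steps: given a maximal chain, the composite of its integral minimal steps lands inside $\overline R$ and the composite of its flat-epimorphic steps generates $S$ over $\overline R$, but a given chain may interleave the two types. To disentangle them one applies Lemma \ref{1.13} to a consecutive pair ``flat epimorphism then integral''; here $S\subset T$ in the lemma is the integral step and $R\subset S$ the Pr\"ufer-minimal step, and one checks $P=N\cap R\not\subseteq M$ using that the crucial ideal of a Pr\"ufer-minimal extension is ``invertible'' in a sense incompatible with the integral crucial ideal above it. After the exchange the integral step moves down, and finitely many such moves sort the chain. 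An alternative, possibly smoother, is to invoke directly that $[R,S]=\bigcup\{[R,T]\mid T\in[R,S],\ R\subseteq T\text{ integral},\ T\subseteq S\text{ Pr\"ufer}\}$-type decompositions from \cite{DPP2}, or the result that $\overline R$ is comparable to every element of any maximal chain after suitable exchange; in any case, the technical heart is the commutation of integral and flat-epimorphic minimal steps via Crosswise Exchange.
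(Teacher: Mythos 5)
Your overall architecture is exactly the paper's: the forward direction is Proposition \ref{1.4}, and for the converse you replace an arbitrary maximal chain $\mathcal C$ by a maximal chain of the same length passing through $\overline R$, then add $\ell[R,\overline R]$ (constant by hypothesis) to $\ell[\overline R,S]$ (constant by Proposition \ref{3.1}). The paper simply cites the proof of \cite[Theorem 4.11]{DPP3} for the reorganization step; you try to make that step self-contained via the Crosswise Exchange, and that is where your argument has a gap. To apply Lemma \ref{1.13} to a configuration $A\subset B\subset C$ with $A\subset B$ Pr\"ufer minimal (crucial ideal $M$) and $B\subset C$ integral minimal (crucial ideal $N$), you must verify $P:=N\cap A\not\subseteq M$. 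What follows easily from the ``invertibility'' you invoke is only that $MB=B$, hence no prime of $B$ lies over $M$ and so $P\neq M$; but $P$ is the trace of a maximal ideal of the flat-epimorphic overring $B$ and need not be maximal in $A$, so the possibility $P\subsetneq M$ is not excluded by what you say, and in that case the hypothesis of the Crosswise Exchange fails outright. Ruling this case out (or handling it separately) is precisely the nontrivial content hiding in the cited proof of \cite[Theorem 4.11]{DPP3}; as written, your ``one checks $P\not\subseteq M$'' is an assertion, not a check. Your fallback of invoking the DPP2/DPP3 results directly closes the gap and reduces your proof to the paper's, but the self-contained version you sketch is incomplete at its declared technical heart.
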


\begin{proof} One implication comes from Proposition \ref{1.4}. 

Conversely, assume that $R\subseteq \overline R$  is catenarian. Set $\ell[R,\overline R]=m$, which is the length of any maximal chain from $R$ to $\overline R$ and $\ell[\overline R,S]=r$, which is the length of any maximal chain from $\overline R$ to $S$ by Proposition  \ref{3.1}. Let $\mathcal C$ be a maximal chain from $R$ to $S$. Using the proof of \cite[Theorem 4.11]{DPP3}, there exists a maximal chain $\mathcal C'$ from $R$ to $S$ containing $\overline R$ with $\ell(\mathcal C')=\ell(\mathcal C)$. Since $\ell(\mathcal C')=\ell[R,\overline R]+\ell[\overline R,S]=m+r$, we get that all maximal chains from $R$ to $S$ have the same length, and $R\subseteq S$  is catenarian.
\end{proof} 

 The previous Proposition shows that we can limit to characterize integral extensions. Before, we show it is enough to deal with  ring extensions $R\subset S$, where $R$ is a local ring.

\begin{proposition}\label{3.3} Let $R\subseteq S$ be an FCP ring  extension.
\begin{enumerate} 
\item If $R\subseteq S$ is  integral, then $R\subseteq S$  is catenarian if and only if $R_M\subseteq S_M$  is catenarian for each $M\in\mathrm{MSupp}(S/R)$.

\item If $R\subseteq S$ is an arbitrary FCP ring  extension, the following statements are equivalent: 
\begin{enumerate}
\item $R\subseteq  S$ is catenarian;

\item $R_M \subseteq S_M$ is catenarian for each $M\in\mathrm{MSupp}(S/R)$;

\item $R_P \subseteq S_P$ is catenarian for each $P\in\mathrm{Supp}(S/R)$.
\end{enumerate}
\item In particular, if $R\subseteq S$ is integral, then

 $\ell[R,S]=\sum_{M\in\mathrm{MSupp}(S/R)}\ell[R_M,S_M]$.
 \end{enumerate}
\end{proposition}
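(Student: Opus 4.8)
The plan is to obtain parts (1) and (3) simultaneously by reducing modulo the conductor, and then to deduce part (2) from part (1) together with Theorem \ref{3.2}.

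For parts (1) and (3) I would assume $R\subseteq S$ integral (hence finite) and set $I:=(R:S)$. By Proposition \ref{1.33}, $R/I$ is Artinian and $\mathrm{V}_R(I)=\mathrm{MSupp}(S/R)=:\{M_1,\dots,M_k\}$ is finite. Since $I$ is an ideal shared by $R$ and $S$, the lattice isomorphism of Corollary \ref{1.014} gives $[R,S]\cong[R/I,S/I]$, so $R\subseteq S$ is catenarian iff $R/I\subseteq S/I$ is, and $\ell[R,S]=\ell[R/I,S/I]$. Being Artinian, $R/I$ splits as $R/I=\prod_{j=1}^{k}A_j$ with $A_j=(R/I)_{M_j}=R_{M_j}/IR_{M_j}$, and Proposition \ref{5.13} writes $S/I=\prod_{j=1}^{k}B_j$ with $A_j\subseteq B_j$; localizing $R/I\to S/I$ at $M_j$ identifies $B_j$ with $S_{M_j}/IS_{M_j}$. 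Proposition \ref{3.9} then shows $R/I\subseteq S/I$ is catenarian iff each $A_j\subseteq B_j$ is, with $\ell[R/I,S/I]=\sum_{j}\ell[A_j,B_j]$. Finally, since $S$ is a finite $R$-module the conductor localizes, so $IR_{M_j}=(R_{M_j}:S_{M_j})=IS_{M_j}$ is an ideal shared by $R_{M_j}$ and $S_{M_j}$, and a second application of Corollary \ref{1.014} to $R_{M_j}\subseteq S_{M_j}$ identifies the catenarity and the length of $A_j\subseteq B_j$ with those of $R_{M_j}\subseteq S_{M_j}$. Concatenating the equivalences gives (1); concatenating the length identities gives (3).

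For part (2), where $R\subseteq S$ is an arbitrary FCP extension, I would first note that for every prime (resp. maximal) ideal $P$ of $R$ the localized extension $R_P\subseteq S_P$ is again FCP and has integral closure $\overline R_P$; hence Theorem \ref{3.2}, applied to $R\subseteq S$ and to each $R_P\subseteq S_P$, lets me replace $S$ by $\overline R$ throughout — it suffices to prove (a)$\Leftrightarrow$(b)$\Leftrightarrow$(c) for the integral extension $R\subseteq\overline R$, keeping $\mathrm{MSupp}(S/R)$ and $\mathrm{Supp}(S/R)$ as the indexing sets. Applying part (1) to $R\subseteq\overline R$, condition (a) is equivalent to ``$R_M\subseteq\overline R_M$ catenarian for all $M\in\mathrm{MSupp}(\overline R/R)$''; since $\mathrm{MSupp}(\overline R/R)\subseteq\mathrm{MSupp}(S/R)$ and, for $M$ in the difference, the extension $R_M\subseteq\overline R_M$ is trivial and hence catenarian, this is equivalent to (b). For (b)$\Leftrightarrow$(c), one implication is immediate from $\mathrm{MSupp}(S/R)\subseteq\mathrm{Supp}(S/R)$; for the other, given $P\in\mathrm{Supp}(S/R)$, either $P\notin\mathrm{Supp}(\overline R/R)$, so $R_P=\overline R_P$ and $R_P\subseteq\overline R_P$ is trivially catenarian, or $P\in\mathrm{Supp}(\overline R/R)=\mathrm{MSupp}(\overline R/R)$ (equality by Proposition \ref{1.33}), whence $P\in\mathrm{MSupp}(S/R)$ and (b) yields catenarity of $R_P\subseteq\overline R_P$; in either case Theorem \ref{3.2} upgrades this to catenarity of $R_P\subseteq S_P$.

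The only genuine obstacle is bookkeeping: checking that the conductor localizes ($IR_{M_j}=(R_{M_j}:S_{M_j})$, valid because $S$ is module-finite over $R$), that the product decomposition of $S/I$ supplied by Proposition \ref{5.13} is the one compatible with the idempotents of $R/I$ (equivalently, with localization at the $M_j$), and that the various support sets relate as stated. No new lattice-theoretic input is needed beyond Corollary \ref{1.014}, Proposition \ref{3.9} and Theorem \ref{3.2}.
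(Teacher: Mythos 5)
Your proposal is correct, but for parts (1) and (3) it takes a genuinely different route from the paper. The paper proves (1) directly at the level of chains: for the forward direction it lifts two maximal chains of $[R_M,S_M]$ to maximal chains of $[R,R_n]$ inside $[R,S]$ using \cite[Theorem 3.6]{DPP2} (realizing each $T_i\in[R_M,S_M]$ as the localization of some $R_i\in[R,S]$ that is trivial at the other maximal ideals), and for the converse it localizes a maximal chain of $[R,S]$ at each $M$ and invokes the additivity $\ell(\mathcal C)=\sum_{M}\ell(\mathcal C_M)$ of \cite[Lemma 4.5]{DPP3}; part (3) is then quoted from \cite[Proposition 4.6]{DPP3}. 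You instead pass to $R/I\subseteq S/I$ with $I=(R:S)$, use the Artinian decomposition of $R/I$ into local factors together with Proposition \ref{5.13} and Proposition \ref{3.9}, and come back up via Corollary \ref{1.014}; this is sound (the compatibility of the product decomposition with the idempotents and the identity $(R_{M_j}:S_{M_j})=IR_{M_j}$ are exactly the points to check, and you flag them), it is more self-contained in that it reuses results proved in the paper rather than the external chain-lifting and length-additivity lemmas, and it yields (3) as a byproduct. Two small caveats: the facts you attribute to Proposition \ref{1.33} ($R/I$ Artinian, $\mathrm{V}_R(I)=\mathrm{Supp}(S/R)=\mathrm{MSupp}(S/R)$) are stated there only for $t$-closed extensions, so you should cite \cite[Theorem 4.2]{DPP2} and the annihilator argument directly for a general integral FCP extension; and the length formula in Proposition \ref{3.9} is phrased under the catenarian hypothesis, so for an unconditional (3) you need the remark in its proof that the formula holds for any FCP product extension. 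Your part (2) is essentially the paper's argument (Theorem \ref{3.2} plus part (1), threading the equivalences through $\overline R$).
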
 

\begin{proof} 
 We begin to prove the result for an integral extension, since the integral case is involved in the proof of the general case.

(1) Assume that $R\subseteq S$ is catenarian. For some $M\in\mathrm{MSupp}(S/R)$, let $\mathcal C:=\{T_i\}_{i=0}^n$ and $\mathcal C':=\{T'_j\}_{j=0}^m$ be two maximal chains of $[R_M,S_M]$, so that $T_{i-1}\subset T_i$  is minimal for each $i\in\mathbb N_n$, Ê$T'_{j-1}\subset T'_j$ is minimal for each $j\in\mathbb N_m$, with $T_0=T'_0=R_M$ and $T_n=T'_m=S_M$. Using \cite[Theorem 3.6]{DPP2}, for each $i\in\mathbb N_n$, there exists $R_i\in[R,S]$ such that $(R_i)_M=T_i$ and $(R_i)_{M'}=R_{M'}$ for any $M'\in\mathrm{MSupp}(S/R)\setminus\{M\}$. In particular,  $R_{i-1}\subset R_i$ is minimal for each $i\in\mathbb N_n$ by Theorem \ref{crucial}. It follows that $\{R_i\}_{i=0}^n$ is a maximal chain of $[R,R_n]$ of length $n$. In the same way, we can build a maximal chain $\{R'_j\}_{j=0}$ of $[R,R'_m]$  of length $m$ such that $(R'_j)_M=T'_j$ and $(R'_j)_{M'}=R_{M'}$ for any $M'\in\mathrm{MSupp}(S/R)\setminus\{M\}$. But $(R_n)_M=T_n=S_M=T'_m=(R'_m)_M$ and $(R_n)_{M'}=R_{M'}=(R'_m)_{M'}$  for any $M'\in\mathrm{MSupp}(S/R)\setminus\{M\}$. It follows that $R_n=R'_m$. Since $R\subset R_n$ is catenarian by Proposition \ref{1.4}, we get that $m=n$, so that $\ell(\mathcal C)=\ell(\mathcal C')$, and $R_M\subseteq S_M$ is catenarian.

Conversely, assume that $R_M\subseteq S_M$  is catenarian for each $M\in\mathrm{MSupp}(S/R)$. Let $\mathcal C$ and $\mathcal C'$ be two maximal chains of $[R,S]$. We  denote by $\mathcal{C}_M$ (resp. $\mathcal{C}'_M$) the chain deduced from $\mathcal{C}$ (resp. $\mathcal{C}'$) by localization at $M \in \mathrm{Spec}(R)$ and deletion of redundant elements, so that $\mathcal{C}_M$ and  $\mathcal{C}'_M$ are maximal chains of $[R_M,S_M]$, and then $ \ell(\mathcal C_M)=\ell(\mathcal C'_M)$. From \cite[Lemma 4.5]{DPP3}, we deduce that  $\ell(\mathcal C)=\sum_{M\in\mathrm{MSupp}(S/R)}\, \ell(\mathcal C_M)=\sum_{M\in\mathrm{MSupp}(S/R)}\, \ell(\mathcal C'_M)=\ell(\mathcal C')$ and $R\subseteq S$  is catenarian.

(2) According to Theorem \ref{3.2} and (1), we have  (a) $\Leftrightarrow R\subseteq \overline R$  is catenarian $\Leftrightarrow R_M\subseteq \overline R_M$ is catenarian for each $M\in\mathrm{MSupp}(S/R)\Leftrightarrow R_P\subseteq \overline R_P$ is catenarian for each $P\in\mathrm{Supp}(S/R)\Leftrightarrow R_P\subseteq S_P$ is catenarian for each $P\in\mathrm{Supp}(S/R)\Leftrightarrow R_M\subseteq S_M$ is catenarian for each $M\in\mathrm{MSupp}(S/R)$.

 (3) The  equality comes from \cite[Proposition 4.6]{DPP3}.
\end{proof} 

\begin{proposition} \label{desc 1} Let $R\subset S$  be a catenarian extension, $f: R \to R'$ a flat ring epimorphism  and $S':=  R'\otimes_RS$. 
 Then   $R'\subset S'$ is catenarian.
  \end{proposition}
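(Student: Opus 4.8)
The plan is to exploit the classical fact that a flat epimorphism $f\colon R\to R'$ induces, by base change $T\mapsto R'\otimes_R T$, a well-behaved map on the lattices $[R,S]\to[R',S']$, and that over an FCP extension this map transports minimal extensions to minimal extensions or to equalities. First I would recall from the literature on FCP extensions (this is established in \cite{DPP2}, and is of the same flavour as Theorem~\ref{crucial} and Proposition~\ref{5.13}) that if $R\subseteq S$ has FCP and $R\to R'$ is a flat epimorphism, then $R'\subseteq S'$ has FCP, the assignment $\varphi\colon T\mapsto T':=R'\otimes_R T=R'T$ (the product computed inside $S'$) is an order-preserving map $[R,S]\to[R',S']$ which is surjective, and for a minimal extension $T\subset U$ in $[R,S]$ the image $\varphi(T)\subseteq\varphi(U)$ is either an equality or a minimal extension of the same nature; moreover every $V\in[R',S']$ is of the form $R'T$ for some $T\in[R,S]$.

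Granting this, the argument is a length-counting one entirely parallel to the proofs of Proposition~\ref{3.9} and of \cite[Lemma 4.5]{DPP3}. Let $\mathcal C$ and $\mathcal D$ be two maximal chains of $[R',S']$. Using surjectivity of $\varphi$ together with the fact that minimal extensions lift (again the FCP machinery of \cite{DPP2,DPP3}), I would lift $\mathcal C$ to a maximal chain $\widetilde{\mathcal C}$ of $[R,S]$ with $\varphi(\widetilde{\mathcal C})=\mathcal C$ after deleting redundancies; since $\varphi$ kills a step $T_{i-1}\subset T_i$ precisely when $R'\otimes_R(T_i/T_{i-1})=0$, and since along a maximal chain from $R$ to $S$ the supports of the successive quotients are governed by Proposition~\ref{1.14}, the number of steps of $\widetilde{\mathcal C}$ that survive under $\varphi$ equals $\ell(\mathcal C)$, and this count is $\ell(\widetilde{\mathcal C})$ minus the number of crucial maximal ideals of $\widetilde{\mathcal C}$ that do not survive base change to $R'$; the latter depends only on $\operatorname{Supp}(S/R)$ and on $f$, not on the chain, by Proposition~\ref{1.14}. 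Doing the same with $\mathcal D$ and a lift $\widetilde{\mathcal D}$, and invoking that $R\subseteq S$ is catenarian so that $\ell(\widetilde{\mathcal C})=\ell(\widetilde{\mathcal D})=\ell[R,S]$, we conclude $\ell(\mathcal C)=\ell(\mathcal D)$. Hence $R'\subseteq S'$ is catenarian.

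The main obstacle I anticipate is not the length bookkeeping but justifying cleanly the two structural facts used above: that the base-change map $\varphi$ is \emph{surjective} onto $[R',S']$ and that minimal steps can be \emph{lifted} through $\varphi$ (equivalently, that a maximal chain of $[R',S']$ is the $\varphi$-image of a maximal chain of $[R,S]$). For a flat epimorphism one has $R'\otimes_R R'=R'$, so $\varphi$ is idempotent-like and one expects $R'\otimes_R V\cong V$ for $V\in[R',S']$, which gives $\varphi(V\cap S\text{-stuff})=V$; making this precise is where I would be careful, probably by reducing via Proposition~\ref{3.3} and Corollary~\ref{1.014} to the local case and using Theorem~\ref{crucial}(1) to separate the flat-epimorphic minimal steps (which may collapse) from the finite minimal steps (whose crucial ideals either lie in $\operatorname{Supp}$ of the relevant module or not). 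An alternative, cleaner route, if the surjectivity of $\varphi$ is available off the shelf from \cite{DPP2}, is to bypass lifting altogether: take any maximal chain $\mathcal E$ of $[R,S]$, push it forward and delete redundancies to get \emph{one} maximal chain of $[R',S']$ of a length independent of $\mathcal E$ (by catenarity of $R\subseteq S$ and Proposition~\ref{1.14}), and then observe that \emph{every} maximal chain of $[R',S']$ arises this way by the surjectivity statement, so they all have that common length.
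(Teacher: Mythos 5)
Your strategy --- push the whole lattice $[R,S]$ forward along $\varphi: T\mapsto R'\otimes_RT$ and count the surviving steps of lifted maximal chains --- is not the paper's, and as written it rests on two structural claims that you flag but do not establish and that are not available off the shelf: that $\varphi$ is surjective onto $[R',S']$, and that every maximal chain of $[R',S']$ is the image of a maximal chain of $[R,S]$. Neither is stated in \cite{DPP2} in this generality for an arbitrary flat epimorphism, and proving them essentially forces you into the local analysis that constitutes the paper's entire proof. There is also a concrete misattribution in your counting step: Proposition~\ref{1.14} only identifies the \emph{set} $\{M_i\cap R\}$ of contracted crucial ideals with $\mathrm{Supp}(S/R)$; it says nothing about the \emph{number} of steps of a maximal chain whose crucial ideal contracts to a given prime $P$, which is the quantity you need to be chain-independent. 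That number is $\ell(\mathcal C_P)$, and its independence of the chain is precisely the catenarity of $R_P\subseteq S_P$, i.e.\ Proposition~\ref{3.3} together with \cite[Lemma 4.5]{DPP3}; the fact is true here, but the justification you give does not support it.

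The paper's proof bypasses all of this with one classical fact about flat epimorphisms: if $Q\in\mathrm{Spec}(R')$ lies over $P\in\mathrm{Spec}(R)$, then $R_P\to R'_Q$ is an \emph{isomorphism}, whence $S'_Q=(R'\otimes_RS)_Q\cong R'_Q\otimes_{R_P}S_P\cong S_P$, so that $R'_Q\subseteq S'_Q$ literally identifies with $R_P\subseteq S_P$. Catenarity of $R\subseteq S$ gives catenarity of every $R_P\subseteq S_P$ by Proposition~\ref{3.3}(2), hence of every $R'_Q\subseteq S'_Q$, hence of $R'\subseteq S'$ by Proposition~\ref{3.3}(2) again. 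You gesture at a local reduction in your closing sentences, but you never invoke the local isomorphism property of flat epimorphisms, which is the single idea that makes the argument a few lines long and removes any need for surjectivity of $\varphi$ or chain lifting. I would redo the proof along those lines rather than trying to repair the global lattice map.
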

  
  \begin{proof}  
The proof is a consequence of the following facts.  Let $f: R\to R'$ be a flat epimorphism and  $Q \in \mathrm{Spec}(R')$, lying over $P$ in $R$, then $R_P \to R'_Q$ is an isomorphism. Moreover,  we have 
  $(R'\otimes_RS)_Q \cong  R'_Q\otimes_{R_P} S_P$, so that $R_P\to S_P$ identifies to $R'_Q \to  (R'\otimes_RS)_Q=S'_Q $. 
  
  Assume that $ R \subset S$  is  catenarian. Then, so is $R_P\to S_P$ for each $P \in \mathrm{Spec}(R)$ by Proposition \ref{3.3}.  Let $Q \in \mathrm{Spec}(R')$ and $P:=f^{-1}(Q)\in \mathrm{Spec}(R)$. Since $R_P\to S_P$ identifies to $R'_Q \to  S'_Q $, we get that $ R'_Q \subset S'_Q$  is  catenarian  for each $Q \in \mathrm{Spec}(R')$. It follows that $ R' \subset S'$  is catenarian  by the same references.
\end{proof} 

 Given a ring $R$, recall that its  {\it Nagata ring} $R(X)$ is the localization $R(X) = T^{-1}R[X]$ of the ring of polynomials $R[X]$ with respect to the multiplicatively closed subset $T$ of all  polynomials with  content $R$.
 In \cite[Theorem 32]{DPP4}, Dobbs and the authors proved that when $R\subset S$ is an extension, whose Nagata extension $R(X)\subset S(X)$  has FIP, the map $\varphi:[R,S]\to [R(X), S(X)]$ defined by $\varphi(T)= T(X)$ is an order-isomorphism. In this case, we have the following result. 

 \begin{proposition}\label{4.1999} Let  $R\subset S$ be an FCP extension whose Nagata extension $R(X)\subset S(X)$ has FIP. Then,  $R\subset S$ has FIP and $R\subset S$  is catenarian if and only if  $R(X)\subset S(X)$ is catenarian.
 \end{proposition}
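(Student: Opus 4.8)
The plan is to exploit the order-isomorphism $\varphi\colon[R,S]\to[R(X),S(X)]$, $T\mapsto T(X)$, established in \cite[Theorem 32]{DPP4}. First I would note that since $R(X)\subset S(X)$ has FIP, the set $[R(X),S(X)]$ is finite; as $\varphi$ is a bijection, $[R,S]$ is finite as well, so $R\subset S$ has FIP. (Alternatively, FIP descends from the Nagata extension by general results, but the bijection gives it directly.)

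Next, the key point is that $\varphi$ being an order-isomorphism means it sends covers to covers in both directions: $T\subset T'$ is a minimal extension (i.e.\ $T$ is covered by $T'$ in $[R,S]$) if and only if $T(X)\subset T'(X)$ is a minimal extension in $[R(X),S(X)]$. Consequently $\varphi$ carries maximal chains of $[R,S]$ bijectively onto maximal chains of $[R(X),S(X)]$, preserving length: a maximal chain $R=R_0\subset R_1\subset\cdots\subset R_n=S$ corresponds to the maximal chain $R(X)=R_0(X)\subset R_1(X)\subset\cdots\subset R_n(X)=S(X)$ of the same length $n$, and every maximal chain of $[R(X),S(X)]$ arises this way since $\varphi$ is onto. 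Catenarity is precisely the statement that all maximal chains have a common length, so it is a property shared by any two order-isomorphic finite-length posets; hence $R\subset S$ is catenarian if and only if $R(X)\subset S(X)$ is catenarian.

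The only thing requiring a small argument is that an order-isomorphism between posets preserves the covering relation, and therefore preserves (and reflects) maximal chains — this is immediate from the definition, since $a\lessdot b$ in $P$ means there is nothing strictly between $a$ and $b$, a condition preserved by any order-isomorphism. I do not anticipate a genuine obstacle here; the content of the proposition is essentially that \cite[Theorem 32]{DPP4} transports the lattice structure faithfully, and catenarity is a lattice-theoretic (indeed order-theoretic) invariant. I would therefore write the proof as: ``By \cite[Theorem 32]{DPP4}, $\varphi$ is an order-isomorphism, so $[R,S]\cong[R(X),S(X)]$ as posets; in particular $[R,S]$ is finite, giving FIP, and $\varphi$ induces a length-preserving bijection between the maximal chains of the two posets, whence one is graded exactly when the other is.'' If one wishes to avoid invoking the abstract order-theoretic fact, one can spell it out using Theorem \ref{crucial} and Proposition \ref{1.14} to match up minimal steps $R_i\subset R_{i+1}$ with minimal steps $R_i(X)\subset R_{i+1}(X)$ directly, but this is not strictly necessary.
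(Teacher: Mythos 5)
Your proposal is correct and follows essentially the same route as the paper: both invoke the order-isomorphism $\varphi\colon[R,S]\to[R(X),S(X)]$, $T\mapsto T(X)$, from \cite[Theorem 32]{DPP4} to get FIP for $R\subset S$ and to transport maximal chains bijectively and length-preservingly between the two lattices, so that catenarity passes in both directions. The only difference is cosmetic: you make explicit the order-theoretic fact that an order-isomorphism preserves covers, which the paper leaves implicit.
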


\begin{proof}  Since $R(X)\subset S(X)$ has FIP, $R\subset S$ has FIP and the map $\varphi:[R,S]\to[R(X),S(X)]$ defined by $\varphi(T)=T(X)$  is an order-isomorphism by \cite[Theorem 32]{DPP4}. In particular, any maximal chain $\mathcal C:=\{R_i\}_{i=0}^n$ of $[R,S]$ of length $n$ is such that $\varphi(\mathcal C):=\{\varphi(R_i)\}_{i=0}^n$ is  a maximal chain of $[R(X), S(X)]$ of length $n$. Conversely, any maximal chain of $[R(X), S(X)]$ of length $n$ comes through $\varphi$ from 
 a maximal chain  of $[R,S]$ of length $n$. Then, the catenarity of $R\subset S$ is equivalent to the catenarity of $R(X)\subset S(X)$. By the way, we recover the equality $\ell[R,S]=\ell[R(X),S(X)]$ of \cite[Theorem 32]{DPP4} which always holds when $R(X)\subset S(X)$ has FIP.
\end{proof} 

When looking at the previous Proposition, where $R\subset R(X)$ is a faithfully flat ring morphism, we may ask if the statement of the above  Proposition  is still satisfied when taking $R\to R'$ an arbitrary faithfully flat ring morphism instead of   $R\to R(X)$ (which is faithfully flat). The answer is no, as shown  at the end of  the next section (Example \ref{3.10}).

In \cite{Pic 11}, we studied the Loewy series $\{S_i\}_{i=0}^n$  associated to an FCP ring extension $R\subseteq S$ defined as follows in   \cite[Definition 3.1]{Pic 11}: the {\it socle} of the extension $R\subset S$ is  $\mathcal S[R,S]:=\prod_{A\in \mathcal A}A$ and the {\it Loewy series} of the extension $R\subset S$ is the chain $\{S_i\}_{i=0}^n$ defined by induction: $S_0:=R,\ S_1:=\mathcal S[R,S]$ and for each $i\geq 0$ such that $S_i\neq S$, we set $S_{i+1}:=\mathcal S[S_i,S]$. Of course, since $R\subset S$ has FCP, there is some integer $n$ such that $S_n=S_{n+1}=S$. We introduced a property often involved in  \cite{Pic 11}: An FCP 
extension  $R\subset S$ with Loewy series $\{S_i\}_{i=0}^n$ is said to satisfy the  property $(\mathcal P)$ (or is a $\mathcal P$-extension) if $[R,S]=\cup_{i=0}^{n-1}[S_i, S_{i+1}]$. For such extensions, the Loewy series  
 gives a catenarity criterion.  
 For instance, we have a catenarian $\mathcal P$-extension which is not distributive in \cite[Example 3.20]{Pic 11}: 

    Set $k:=\mathbb{Q}, \ L:=k[x]$, where $x:=\sqrt 3+\sqrt 2$ and    $k_i:=k[\sqrt i],\ i=2,3,6$. Then, $[k,L]=\{k,k_1,k_2,k_3,L\}$ and  the following diagram holds:
$$\begin{matrix}
   {}  &        {}      & L             &       {}       & {}     \\
   {}  & \nearrow & \uparrow  & \nwarrow & {}     \\
k_1 &       {}       & k_2         &      {}        & k_3 \\
  {}   & \nwarrow & \uparrow & \nearrow & {}     \\
  {}   &      {}        & k             & {}             & {} 
\end{matrix}$$
 $k\subset L$ is a non-distributive extension of length 2, 
 but is
  catenarian. Moreover, $\mathcal S[k,L]=L=S_1,\ [k,L]=[k,S_1]$ and $k\subset L$ is a $\mathcal P$-extension. 

 \begin{proposition}\label{1.0143}  Let $R\subseteq S$ be an FCP   $\mathcal P$-extension with Loewy series $\{S_i\}_{i=0}^n$. Then $R\subseteq  S$ is catenarian if and only if $S_i\subset S_{i+1}$ is catenarian for each $i\in\{0,\ldots,n-1\}$. Moreover, $\ell[R,S]=\sum_{i=0}^{n-1}\ell[S_i,S_{i+1}]$.
\end{proposition}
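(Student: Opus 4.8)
The statement has the shape of a ``catenarity is detected layer-by-layer'' result for $\mathcal P$-extensions, exactly parallel in spirit to Proposition \ref{3.9}. The plan is to prove both directions by exploiting the defining property $[R,S]=\bigcup_{i=0}^{n-1}[S_i,S_{i+1}]$ of a $\mathcal P$-extension, which forces every maximal chain of $[R,S]$ to pass through each term $S_i$ of the Loewy series.

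\medskip
\noindent\emph{Forward direction.} Suppose $R\subseteq S$ is catenarian. Since each $S_i\subseteq S_{i+1}$ is a subextension of $R\subseteq S$, it is catenarian by Proposition \ref{1.4}; this is immediate and requires no use of the $\mathcal P$-hypothesis.

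\medskip
\noindent\emph{Converse direction and the length formula.} Assume each $S_i\subset S_{i+1}$ is catenarian, say of common maximal-chain length $\ell_i:=\ell[S_i,S_{i+1}]$. The key claim is that every maximal chain $\mathcal C$ of $[R,S]$ decomposes as a juxtaposition of maximal chains $\mathcal C\cap[S_i,S_{i+1}]$, one in each layer $[S_i,S_{i+1}]$. To see this, first recall $S_i=\mathcal S[S_{i-1},S]$ is the socle of $S_{i-1}\subset S$, i.e.\ the product of all atoms of $[S_{i-1},S]$; in particular $S_i$ is the supremum in $[S_{i-1},S]$ of the minimal extensions of $S_{i-1}$, so any element $T$ of $[R,S]$ with $S_{i-1}\subseteq T$ either lies in $[S_{i-1},S_i]$ or strictly contains some atom of $[S_{i-1},S]$. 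Using $[R,S]=\bigcup_{i=0}^{n-1}[S_i,S_{i+1}]$, each $T\in\mathcal C$ lies in exactly one layer $[S_{i(T)},S_{i(T)+1}]$ (with ambiguity only at the ``boundary'' elements $T=S_j$, which lie in two consecutive layers), and since $\mathcal C$ is totally ordered with least element $R=S_0$ and greatest element $S=S_n$, the indices $i(T)$ are monotone along $\mathcal C$ and every $S_j$ must occur as an element of $\mathcal C$ (a maximal chain cannot skip $S_j$: the portion of $\mathcal C$ below the first element of layer $j$ would otherwise have to jump from $[S_{j-1},S_j)$ directly past $S_j$, impossible since $S_j$ is the compositum $S_{j-1}$ with all atoms and a minimal step out of $[S_{j-1},S_j]$ lands inside $[S_{j-1},S_j]$ by the socle description, contradicting maximality). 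Hence $\mathcal C=\bigcup_{i=0}^{n-1}\bigl(\mathcal C\cap[S_i,S_{i+1}]\bigr)$, and each $\mathcal C\cap[S_i,S_{i+1}]$ is a \emph{maximal} chain of $[S_i,S_{i+1}]$ (any refinement would refine $\mathcal C$). Therefore $\ell(\mathcal C)=\sum_{i=0}^{n-1}\ell\bigl(\mathcal C\cap[S_i,S_{i+1}]\bigr)=\sum_{i=0}^{n-1}\ell_i$ by the catenarity of each layer, which is independent of $\mathcal C$. This proves $R\subseteq S$ is catenarian and simultaneously gives $\ell[R,S]=\sum_{i=0}^{n-1}\ell[S_i,S_{i+1}]$.

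\medskip
\noindent\textbf{Main obstacle.} The crux is the combinatorial claim that, in a $\mathcal P$-extension, every maximal chain of $[R,S]$ actually \emph{visits} each Loewy term $S_j$ and splits cleanly at those terms. The subtlety is that a priori a maximal chain might pass from layer $[S_{j-1},S_j]$ into layer $[S_j,S_{j+1}]$ through a single minimal step whose top is not $S_j$; ruling this out requires combining the socle characterization $S_i=\mathcal S[S_{i-1},S]=\prod_{A\in\mathcal A(S_{i-1},S)}A$ with the $\mathcal P$-property $[R,S]=\bigcup[S_i,S_{i+1}]$ to show the layers are ``downward closed enough'' that a minimal extension inside $[R,S]$ starting below $S_j$ cannot leave $[S_{j-1},S_j]$ except by reaching exactly $S_j$. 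I expect this to be the one place where the hypotheses must be used carefully; once it is in hand, the length bookkeeping is routine, as in Proposition \ref{3.9}.
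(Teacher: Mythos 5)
Your proof is correct and follows essentially the same route as the paper: the forward direction is Proposition \ref{1.4}, and the converse rests on showing that every maximal chain must contain each $S_i$ (the $\mathcal P$-property $[R,S]=\bigcup_{i=0}^{n-1}[S_i,S_{i+1}]$ makes every member of the chain comparable to every $S_i$, so the first chain element containing $S_i$ must equal $S_i$ by minimality of the preceding step), after which the chain splits into maximal chains of the layers and the lengths add. The only cosmetic difference is that your appeal to the socle description of $S_i$ is superfluous and slightly misplaced: the comparability forced by the $\mathcal P$-property is all the paper uses, and is all you need.
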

\begin{proof} One implication is obvious  because of Proposition \ref{1.4}. Conversely, assume that each  $S_i\subset S_{i+1}$ is catenarian  and set $n_i:=\ell[S_i,S_{i+1}]$. Let $\mathcal C:=\{R_j\}_{j=0}^m$ be a maximal chain of $[R,S]$ so that $R_0=R,\ R_m=S$ and $R_j\subset R_{j+1}$ is minimal for each $j\in\{0,\ldots,m-1\}$. Since $R\subseteq S$ is a   $\mathcal P$-extension, for each $j\in\{0,\ldots,m-1\}$, there is a unique $i_j\in\{0,\ldots,n-1\}$ such that $R_j\in[S_{i_j},S_{i_j+1}[$, so that any $R_j$ is comparable to any $S_i$. We claim that for each $i\in\{0,\ldots,n-1\}$, there is a unique $j_i\in\{0,\ldots,m-1\}$ such that $S_i=R_{j_i}$. For $i\in\{1,\ldots,n-1\}$, set $j_i:=\inf\{j\in\{1,\ldots,m-1\}\mid S_i\subseteq R_j\}$, so that $S_i\subseteq R_{j_i}$. If $S_i=R_{j_i}$, we are done. Assume that $S_i\subset R_{j_i}$.  By definition of $j_i$, we get that $R_{j_i-1}\not\in[S_i,S]$, so that $R_{j_i-1}\subset S_i\subset R_{j_i}$, a contradiction since $R_{j_i-1}\subset R_{j_i}$ is minimal. In particular, $\ell[R_{j_{i-1}},R_{j_i}]=\ell[S_{i-1},S_i]=n_{i-1}$, which leads to $\ell(\mathcal C)=m=\sum_{i=1}^n\ell[R_{j_{i-1}},R_{j_i}]=\sum_{i=1}^n\ell[S_{i-1},S_i]=\sum_{i=1}^nn_{i-1}$, which is independent of $\mathcal C$. Then, any maximal chain of $[R,S]$ has the same length and $R\subset S$ is catenarian.
\end{proof}

Before   characterizing  catenarian integral FCP extensions, we begin to look at special simpler cases.

\begin{proposition} \label{1.5} An infra-integral FCP   extension  is  catenarian.
\end{proposition}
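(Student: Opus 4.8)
The plan is to reduce to the local case and then argue via the structure theory of infra-integral minimal extensions. By Proposition~\ref{3.3}(1), an integral FCP extension $R\subseteq S$ is catenarian if and only if $R_M\subseteq S_M$ is catenarian for each $M\in\mathrm{MSupp}(S/R)$; since localization preserves the property of being infra-integral (the residual extensions of $R_M\subseteq S_M$ are among those of $R\subseteq S$), we may assume $(R,M)$ is local. We may also assume $R\subset S$ is proper. Now recall from Proposition~\ref{1.31}(2) that in any maximal chain $R=R_0\subset R_1\subset\cdots\subset R_n=S$ refining $R\subset S$ into minimal extensions, each step $R_i\subset R_{i+1}$ is either ramified or decomposed. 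The goal is to show $n$ does not depend on the chosen chain.

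The key step is to control how the crucial maximal ideals behave along such a chain. First I would observe that, $R$ being local with maximal ideal $M$, every $R_i$ is integral over $R$ and semilocal, and the crucial ideal $\mathcal C(R_i,R_{i+1})$ lies over $M$; moreover $(R:S)=M$ is the conductor, shared by $R$ and $S$, and $S/M$ is a finite-dimensional algebra over the field $R/M$. The natural approach is to run an induction on $\ell[R,S]$ using the Crosswise Exchange (Lemma~\ref{1.13}): given two maximal chains, compare their first minimal steps $R\subset T$ and $R\subset T'$. If $T=T'$ we pass to $[T,S]$ and invoke induction. If $T\neq T'$, then $TT'\in[R,S]$ and both $T\subset TT'$ and $T'\subset TT'$ are minimal (this is the standard fact that in an FCP extension two distinct atoms generate a length-two sublattice, using that an infra-integral minimal extension has the relevant rigidity); so $[R,TT']=\{R,T,T',TT'\}$ has all maximal chains of length $2$, and one then splices: refine a maximal chain through $T$ and a maximal chain through $T'$ by routing both through $TT'$, reducing to the comparison of chains in $[TT',S]$, where induction applies. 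The length $2$ fact at the bottom plus the inductive hypothesis forces the two original chains to have equal length.

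The main obstacle is justifying that when $T\neq T'$ are two distinct atoms of $[R,S]$ (both infra-integral minimal over the local ring $R$), the interval $[R,TT']$ is exactly $\{R,T,T',TT'\}$ and hence graded of length $2$. In the ramified/decomposed setting one should argue as follows: both $R\subset T$ and $R\subset T'$ have the same crucial ideal $M$ (since $R$ is local), so Crosswise Exchange in the form quoted does not directly apply (it needs $P\not\subseteq M$); instead one uses that $T$ and $T'$ are $R$-submodules of $S$ with $T/R$, $T'/R$ simple $R/M$-modules, whence $TT'$ is generated over $R$ by $T$ and $T'$ and $TT'/R$ has length $2$ as an $R/M$-module — giving $[R,TT']$ length $2$ and, because an FCP interval of length $2$ over a field-like base with the two atoms prescribed is forced to be the diamond, exactly the four algebras listed. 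This is where the infra-integral hypothesis is essential: it guarantees the residual extensions are trivial, so the local module-theoretic length computation governs the lattice. Once this diamond fact is in hand, the splicing-and-induction argument closes the proof, and in fact shows more precisely that $\ell[R,S]$ equals the length of $S/R$ as determined by the Loewy-type filtration.
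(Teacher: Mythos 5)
Your overall strategy (localize, then induct by comparing the first steps of two maximal chains) is reasonable, but the step you yourself flag as "the main obstacle" is in fact false, and the proof does not close. The claim that two distinct atoms $T\neq T'$ over a local ring $(R,M)$ satisfy $[R,TT']=\{R,T,T',TT'\}$, with $TT'/R$ of length $2$ over $R/M$, fails already in the simplest non-reduced example: take $R=k$ a field and $S=k[X,Y]/(X^2,Y^2)$, which is a local $k$-algebra with residue field $k$, hence an infra-integral FCP (indeed FIP-free but finite-length) extension of $k$. The atoms $T=k[x]$ and $T'=k[y]$ are each minimal ramified over $k$, yet $TT'=S$ has $\dim_k(S/k)=3$, the extension $T\subset TT'$ is not minimal (it factors through $T+kxy$), and $[R,TT']$ has maximal chains of length $3$. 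The error is the silent identification of the subalgebra generated by $T$ and $T'$ with the submodule $T+T'$: one must account for products such as $xy$. Without the diamond fact, the splicing-and-induction argument has nothing to splice through, and I do not see how to repair it along these lines (a Schreier-type refinement would need precisely the modularity you cannot assume).

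For comparison, the paper disposes of this proposition by citing \cite[Lemma 5.4]{DPP2}, and the honest direct argument is both simpler and stronger than what you attempt: if $T\subset T'$ is any minimal step inside an infra-integral FCP extension $R\subseteq S$, it is ramified or decomposed by Proposition~\ref{1.31}(2), so $T'/T$ is a one-dimensional vector space over $T/N$ where $N$ is the crucial ideal; since $R\subseteq T'$ is infra-integral, the residual map $R/(N\cap R)\to T/N$ is an isomorphism, so $T'/T$ has length exactly $1$ as an $R$-module. Hence every maximal chain from $R$ to $S$ has length $\ell_R(S/R)$, which is finite because $S$ is a finite module over $R$ and $R/(R:S)$ is Artinian. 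This computes the common length outright (it is $3$ in the example above, consistent with the extension being catenarian even though your diamond claim fails there) and needs neither localization nor induction. Your reduction to the local case via Proposition~\ref{3.3}(1) is correct but unnecessary.
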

\begin{proof} \cite[Lemma 5.4]{DPP2}.
\end{proof} 

\begin{corollary} \label{1.6} Let $R$ be a commutative ring and $n\geq 2$ a positive integer. Then $R\subseteq R^n$  is  catenarian.
\end{corollary}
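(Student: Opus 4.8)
The plan is to observe that the diagonal extension $R\subseteq R^n$, $r\mapsto(r,\dots,r)$, is \emph{infra-integral} in the sense of Definition \ref{1.3}, and then to invoke Proposition \ref{1.5}.

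First I would record that $R^n$ is a free $R$-module of rank $n$, so that $R\subseteq R^n$ is module-finite, hence integral; being one of the FCP extensions considered throughout the paper, Proposition \ref{1.5} will apply once infra-integrality has been checked. To verify the latter, recall that the prime ideals of the product ring $R^n$ are exactly the ideals $Q=R\times\cdots\times P\times\cdots\times R$, with $P\in\mathrm{Spec}(R)$ placed in a single slot; such a $Q$ contracts to $P$ along the diagonal, and the composite $R\to R^n\to R^n/Q\cong R/P$ is simply the canonical surjection. Localizing and passing to residue fields, the residual extension $\kappa_R(P)\to\kappa_{R^n}(Q)$ is therefore an isomorphism (indeed the identity) for every $Q\in\mathrm{Spec}(R^n)$. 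Hence $R\subseteq R^n$ is infra-integral, and Proposition \ref{1.5} yields that it is catenarian. Equivalently, by Proposition \ref{1.31}(2) this says that every minimal step in a maximal chain of $[R,R^n]$ is ramified or decomposed, which is the same input.

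There is no real obstacle here: the only points needing (routine) care are the standard description of $\mathrm{Spec}(R^n)$ and of the residue fields of a finite self-product of $R$, together with the fact that the FCP hypothesis on $R\subseteq R^n$ is implicit in the standing conventions of the paper. I would avoid the seemingly natural induction on $n$ obtained by peeling off one factor at a time: it produces a tower $R\subseteq R\times\Delta(R^{n-1})\subseteq R^n$ whose two pieces are catenarian by the inductive hypothesis, but catenarity does not in general glue along such a tower — Proposition \ref{1.4} supplies only the easy direction — so the infra-integral route is both shorter and safer.
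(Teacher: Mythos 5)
Your proof is correct and follows essentially the same route as the paper: the paper's entire proof of this corollary is the observation that $R\subseteq R^n$ is infra-integral (citing \cite[Proposition 1.4]{Pic 9}) combined with Proposition \ref{1.5}. You merely spell out the infra-integrality directly via the standard description of $\mathrm{Spec}(R^n)$ and its residue fields, which is a sound substitute for the citation.
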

\begin{proof} $R\subseteq R^n$  is   infra-integral \cite[Proposition 1.4]{Pic 9}.
\end{proof}

\begin{proposition}\label{3.411} Let $R\subseteq S$ be an integral FCP extension such that ${}_S^tR\subseteq S$ is catenarian. Let $m:=\ell[R,{}_S^tR]$ and $r:=\ell[{}_S^tR,S]$. The following statements hold:

\begin{enumerate}
\item Any maximal chain containing ${}_S^tR$ has length $m+r$.

\item If   $[R,S]=[R,{}_S^tR]\cup[{}_S^tR,S]$, then $R\subseteq S$ is catenarian.
\end{enumerate}
\end{proposition}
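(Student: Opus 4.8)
The plan is to establish (1) first and then deduce (2) from it together with the hypothesis that $[R,S]=[R,{}_S^tR]\cup[{}_S^tR,S]$. For (1), write $B:={}_S^tR$ and let $\mathcal C$ be any maximal chain of $[R,S]$ that contains $B$. Splitting $\mathcal C$ at $B$ produces a maximal chain $\mathcal C_1$ of $[R,B]$ and a maximal chain $\mathcal C_2$ of $[B,S]$, and conversely any such pair juxtaposes to a maximal chain of $[R,S]$ through $B$. Now $R\subseteq B$ is infra-integral by the very definition of the $t$-closure (Definition \ref{1.3}), hence catenarian by Proposition \ref{1.5}, so $\ell(\mathcal C_1)=\ell[R,B]=m$. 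By hypothesis $B\subseteq S$ is catenarian, so $\ell(\mathcal C_2)=\ell[B,S]=r$. Therefore $\ell(\mathcal C)=\ell(\mathcal C_1)+\ell(\mathcal C_2)=m+r$, which is independent of the chosen maximal chain through $B$.

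For (2), suppose in addition that $[R,S]=[R,B]\cup[B,S]$. Let $\mathcal C=\{R_j\}_{j=0}^{\ell}$ be an \emph{arbitrary} maximal chain of $[R,S]$, with $R_0=R$, $R_\ell=S$ and $R_j\subset R_{j+1}$ minimal for each $j$. Each $R_j$ lies in $[R,B]$ or in $[B,S]$; in either case $R_j$ is comparable with $B$. Set $j_0:=\max\{j\mid R_j\subseteq B\}$, so that $R_{j_0}\subseteq B$ and, for $j>j_0$, $R_j\not\subseteq B$, hence $R_j\in[B,S]$, i.e. $B\subseteq R_j$. In particular $B\subseteq R_{j_0+1}$. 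I claim $R_{j_0}=B$: otherwise $R_{j_0}\subsetneq B\subseteq R_{j_0+1}$, contradicting minimality of $R_{j_0}\subset R_{j_0+1}$ (using that $B\ne R_{j_0+1}$ would force $R_{j_0}=B$, and $B=R_{j_0+1}$ would mean $R_{j_0+1}\subseteq B$, contradicting the choice of $j_0$). Hence $\mathcal C$ passes through $B$, and by (1), $\ell(\mathcal C)=m+r$. Since this holds for every maximal chain, $R\subseteq S$ is catenarian.

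The only subtle point is the case analysis showing that every maximal chain must pass through $B$ once the decomposition $[R,S]=[R,B]\cup[B,S]$ is assumed; it is essentially the same argument as in the proof of Proposition \ref{1.0143}, where the $\mathcal P$-property was used to force a maximal chain through each Loewy layer. There is no real obstacle here beyond being careful that $R_{j_0}\subset R_{j_0+1}$ minimal leaves no room for an intermediate algebra, so $R_{j_0}\subsetneq B\subseteq R_{j_0+1}$ is impossible unless $B$ equals one of the two endpoints, and the choice of $j_0$ rules out $B=R_{j_0+1}$ while $R_{j_0}\subsetneq B$ rules out $B=R_{j_0}$. Thus $R_{j_0}=B$ and (2) follows from (1).
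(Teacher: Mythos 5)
Your proof is correct and follows essentially the same route as the paper's: part (1) by splitting a chain through ${}_S^tR$ into maximal chains of $[R,{}_S^tR]$ and $[{}_S^tR,S]$ and using that $R\subseteq{}_S^tR$ is infra-integral hence catenarian (Proposition \ref{1.5}) together with the hypothesis on ${}_S^tR\subseteq S$; part (2) by taking the largest index $j_0$ with $R_{j_0}\subseteq{}_S^tR$ and using minimality of $R_{j_0}\subset R_{j_0+1}$ to force $R_{j_0}={}_S^tR$. No gaps.
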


\begin{proof}  Proposition  \ref{1.5} ensures us that $R\subseteq {}_S^tR$ is catenarian. If ${}_S^tR\in\{R,S\}$, then $R\subseteq S$ is catenarian. Assume that ${}_S^tR\not\in\{R,S\}$.

(1) Let $\mathcal C$ be a maximal chain of $[R,S]$ containing ${}_S^tR$. Set $\mathcal C_1:=\mathcal C\cap[R,{}_S^tR]$ and $\mathcal C_2:=\mathcal C\cap [{}_S^tR,S]$. Since ${}_S^tR\in \mathcal C$, we get that $\mathcal C_1$ is a maximal chain of $[R,{}_S^tR]$ and $\mathcal C_2$ is a maximal chain of $[{}_S^tR,S]$, so that  $\ell(\mathcal C)=\ell(\mathcal C_1)+\ell(\mathcal C_2)=m+r$.

(2) Assume that  $[R,S]=[R,{}_S^tR]\cup[{}_S^tR,S]$. Let $\mathcal C:=\{R_i\}_{i=0}^n$ be a maximal chain of $[R,S]$ such that $R_{i-1}\subset R_i$  is minimal for each $i\in\mathbb N_n$. Let $k:=\sup\{i\in\mathbb N_n\mid R_i\in[R,{}_S^tR]\}$. In fact, $k\neq n$ because $ {}_S^tR\neq S$. Then
 $R_k\subseteq {}_S^tR\subset R_{k+1}$. Since $R_k\subset R_{k+1}$  is minimal, it follows that ${}_S^tR=R_k\in \mathcal C$. Then (1) yields  that  $\ell(\mathcal C)=m+r$, which  shows that $R\subseteq S$ is catenarian.
\end{proof} 

\begin{lemma}\label{3.41} Let $R\subset T$ and $T\subset S$ be two minimal extensions such that $R\subset T$ is inert and $T\subset S$ is either decomposed or ramified (i.e. non-inert). Setting  $T':={}_S^tR$, the following statements hold: 
\begin{enumerate}
\item If $(R:T)\neq (T:S)$, then $R\subset T'$ (resp. $T'\subset S$) is minimal of the same type as $T\subset S$ (resp. $R\subset T$) and $[R,S]=\{R,T,T',S\}$.

\item If $(R:T)=(T:S)$, then $\ell[R,S]>2$.
\end{enumerate}
\end{lemma}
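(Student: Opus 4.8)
The plan is to analyze the two minimal extensions $R\subset T$ (inert, with crucial ideal $M:=(R:T)\in\mathrm{Max}(R)$) and $T\subset S$ (non-inert, with crucial ideal $N:=(T:S)\in\mathrm{Max}(T)$) via the Crosswise Exchange (Lemma \ref{1.13}) and the classification of minimal extensions (Theorem \ref{minimal}), distinguishing according to whether the crucial ideals ``overlap'' or not. First I would record that since $R\subset T$ is inert, $\mathrm{Spec}(T)\to\mathrm{Spec}(R)$ is bijective (Proposition \ref{1.31}), so $N\cap R=:P$ is the unique prime of $R$ below $N$, and $M\in\mathrm{Max}(R)$ is the unique prime of $R$ below the maximal ideal $\mathcal N$ of $T$ with $\mathcal N\cap R = M$ (in the inert case $M\in\mathrm{Max}(T)$ as well, by Theorem \ref{minimal}(a), so really $N$ and $M$ are both maximal ideals of $T$). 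The dichotomy $(R:T)\neq(T:S)$ versus $(R:T)=(T:S)$ translates, after contracting to $R$ and using bijectivity of $\mathrm{Spec}(T)\to\mathrm{Spec}(R)$, into $P\neq M$ versus $P=M$; I expect this translation to be the cleanest way to invoke Crosswise Exchange, whose hypothesis is exactly $P\not\subseteq M$, i.e.\ (as $M$ is maximal) $P\neq M$.

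For part (1): assume $(R:T)\neq(T:S)$, equivalently $P\neq M$, so $P\not\subseteq M$. Apply Lemma \ref{1.13} to $R\subset T\subset S$ (with the roles: $M=\mathcal C(R,T)$, $N=\mathcal C(T,S)$). It produces $T'\in[R,S]$ with $R\subset T'$ minimal of the same type as $T\subset S$ (hence decomposed or ramified, i.e.\ infra-integral), $T'\subset S$ minimal of the same type as $R\subset T$ (hence inert), and $[R,S]=\{R,T,T',S\}$. It remains to identify this $T'$ with ${}_S^tR$. Since $R\subset T'$ is infra-integral (Proposition \ref{1.31}(2)) and $T'\subset S$ is inert hence t-closed (Proposition \ref{1.31}(1)), the defining property of the t-closure in Definition \ref{1.3} — smallest $B$ with $B\subseteq S$ t-closed, greatest $B'$ with $R\subseteq B'$ infra-integral — forces ${}_S^tR=T'$: indeed $R\subseteq T'$ infra-integral gives $T'\subseteq {}_S^tR$, and $T'\subseteq S$ t-closed together with $T'\in\{R,T,T',S\}$ (the only candidates, none of $R,T,S$ being equal to $T'$ and $R\subseteq T$ not infra-integral since it is inert and nontrivial) pins down ${}_S^tR=T'$. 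This yields both assertions of (1).

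For part (2): assume $(R:T)=(T:S)$, equivalently $P=M$. Here I would argue by contradiction: if $\ell[R,S]=2$ then $[R,S]=\{R,T,S\}$ is a chain of length $2$ (any maximal chain having length exactly $2$). Then the Loewy/atom analysis applies: the only atom over $R$ is $T$, so $\mathcal S[R,S]=T$ and $R\subset S$ would be a $\mathcal P$-extension with $[R,S]=[R,T]\cup[T,S]$; more directly, any maximal chain from $R$ to $S$ passes through $T$, so $R\subseteq S$ is catenarian of length $2$. Now I'd derive a contradiction from $P=M$ by a support/crucial-ideal computation: by Proposition \ref{1.14} applied to the maximal chain $R\subset T\subset S$, $\mathrm{Supp}(S/R)=\{M\cap R, N\cap R\}=\{M,P\}=\{M\}$, so $R_{M}\subset S_{M}$ has length $2$ while $R_Q=S_Q$ for all other primes $Q$; hence we may localize at $M$ and assume $(R,M)$ local. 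Then $T$ is the unique minimal extension of $R$ in $S$, but I claim one can also produce, using Theorem \ref{minimal} and the structure of $R\subset T$ inert / $T\subset S$ non-inert sharing the crucial ideal, a \emph{second} intermediate ring strictly between $R$ and $S$ distinct from $T$ — for instance by realizing that the composite $R\subset S$ is then neither t-closed nor infra-integral yet its t-closure ${}_S^tR$ must be an intermediate ring $\neq R,T,S$, which already forces $|[R,S]|\geq 4$ and $\ell[R,S]>2$. The main obstacle will be the last step: carefully showing that when the crucial ideals coincide the extension $R\subset S$ cannot be chained of length $2$, i.e.\ producing that extra intermediate algebra; I expect this to follow from a direct module-theoretic analysis over the local ring $R_M$ using the explicit forms (a),(b),(c) of Theorem \ref{minimal} — writing $T=R+Rb$ and tracking how the non-inert minimal extension $T\subset S$ interacts with the residue field extension at $M$ — but it requires a genuine case check on the three types (inert $R\subset T$ versus decomposed/ramified $T\subset S$) rather than a one-line appeal to an earlier result.
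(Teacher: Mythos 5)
Part (1) of your proposal is correct and follows the paper's route exactly: translate $(R:T)\neq(T:S)$ into $N\cap R\not\subseteq M$ via the bijectivity of $\mathrm{Spec}(T)\to\mathrm{Spec}(R)$ for the inert extension, apply the Crosswise Exchange, and then identify the exchanged ring with ${}_S^tR$ using Proposition \ref{1.31} and the two universal properties of the $t$-closure. Your explicit identification of $T'$ with ${}_S^tR$ is actually spelled out more carefully than in the paper, and it is right.

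Part (2) has a genuine gap. Your key step is the claim that ${}_S^tR\notin\{R,T,S\}$ forces $|[R,S]|\geq 4$ and hence $\ell[R,S]>2$. The first half is fine (the paper makes the same observation that ${}_S^tR\neq R,S$ because $R\subset S$ is neither infra-integral nor $t$-closed), but the implication $|[R,S]|\geq 4\Rightarrow\ell[R,S]>2$ is false: the diamond $[R,S]=\{R,T,T',S\}$ produced in case (1) has four elements and length $2$. So producing one extra intermediate algebra proves nothing; you must exhibit a chain of length $3$, i.e.\ two maximal chains of different lengths. Your earlier reductions are also shaky: $\ell[R,S]=2$ does not imply $[R,S]=\{R,T,S\}$, and after localizing at $M$ there is no reason for $T$ to be the unique atom of $[R,S]$. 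You do flag that the ``last step'' requires a case check you have not carried out, and that is precisely where the content of the lemma lies. The paper's argument is: since ${}_S^tR\neq R$, choose $R_1\in[R,{}_S^tR]$ with $R\subset R_1$ minimal non-inert and $(R:R_1)=M$, and then invoke \cite[Propositions 7.1 and 7.4]{DPPS}, which analyze the lattice $[R,TR_1]$ of the composite of an inert and a non-inert minimal extension sharing the same crucial maximal ideal and show it contains two maximal chains of different lengths. Without that input (or an equivalent hands-on analysis of the composite $TR_1$ over the local ring $R_M$ using the three types of Theorem \ref{minimal}), your proof of (2) does not close.
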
 

\begin{proof} Set $M:=(R:T)$ and $N:=(T:S)$. Since $R\subset T$ is inert, then $M\in\mathrm{Max}(T)$.

(1) If $M\neq N$, then $R\subset T'$ is a minimal extension of the same type as $T\subset S$ and $ T'\subset S$ is a minimal inert extension by the Crosswise exchange (Lemma  \ref{1.13}). Indeed, $N\cap R\not\subseteq M$ since $N\cap R\in\mathrm{Max}(R)$. Moreover, the Crosswise exchange lemma says that $[R,S]=\{R,T,T',S\}$.

 (2) Assume that $M=N$, so that $M=(R:S)$. Since $R\subset S$ is neither infra-integral, nor t-closed, we get that $T'\neq R,S$. Moreover, $R\subset S$ has FCP by \cite[Theorem 4.2]{DPP2}. Then, there exists $R_1\in[R,T']$ such that $R\subset R_1$ is minimal 
  non-inert 
 with $M=(R:R_1)$. Using \cite[Proposition 7.1 or 7.4]{DPPS}, we obtain two maximal chains of $[R,TR_1]$ of different lengths, so that $\ell[R,S]>2$.
\end{proof}

\begin{proposition}\label{3.42} Let $R\subset S$ be an integral FCP extension. For any maximal chain $\mathcal C$ of $[R,S]$, there exists a maximal chain $\mathcal C'$ of $[R,S]$ containing ${}_S^tR$ such that $\ell(\mathcal C')\geq \ell(\mathcal C)$. In particular, $\ell(\mathcal C')> \ell(\mathcal C)$ if and only if there exist $U,T,V\in \mathcal C$ such that $U\subset T$ is inert, $T\subset V$ is minimal 
  non-inert, 
 and $(U:T)=(T:V)$. If these last conditions hold, then $(T:V)\cap R\in \mathrm{MSupp}({}_S^tR/R)$ and $\{P\cap {}_S^tR\mid P\in \mathrm{V}_S((U:T))\}\subseteq \mathrm{MSupp}_{{}_S^tR}(S/{}_S^tR)$. 
\end{proposition}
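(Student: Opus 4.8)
The plan is to work one minimal step at a time along the chain $\mathcal C$ and push the $t$-closed steps to the top, using Lemma \ref{3.41} as the basic move. Write $\mathcal C : R = R_0 \subset R_1 \subset \cdots \subset R_n = S$ with each $R_{i-1}\subset R_i$ minimal. By Proposition \ref{1.31}, every minimal step is of one of the four types, and the inert ones are exactly those contributing to the $t$-closed part while the decomposed/ramified ones are the infra-integral part. I would scan the chain from the bottom and look for a ``bad pattern'': an index $i$ where $R_{i-1}\subset R_i$ is non-inert but $R_i\subset R_{i+1}$ is inert. Whenever such a pattern occurs, I apply Lemma \ref{3.41} to the pair of steps $R_{i-1}\subset R_i\subset R_{i+1}$ (note the lemma is stated with the inert step first; here I would use it in the mirrored configuration, which is the genuinely symmetric content of the Crosswise Exchange, Lemma \ref{1.13}). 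Two things can happen according to whether $(R_{i-1}:R_i) = (R_i:R_{i+1})$ or not.

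If the two conductors differ, Lemma \ref{3.41}(1) (equivalently, the Crosswise Exchange) replaces $R_i$ by a new intermediate $R_i'$ so that $R_{i-1}\subset R_i'$ is inert and $R_i'\subset R_{i+1}$ is non-inert, i.e.\ the bad pattern is swapped for a good one, and $[R_{i-1},R_{i+1}] = \{R_{i-1},R_i,R_i',R_{i+1}\}$; the length of the chain is unchanged. Iterating this ``bubble'' move — always moving inert steps upward past non-inert steps, which is a terminating procedure since it strictly decreases the number of inversions in the sequence of step-types — produces a maximal chain in which all inert steps come after all non-inert steps. Since an extension built by stacking non-inert minimal steps is infra-integral and one built by stacking inert minimal steps is $t$-closed (Proposition \ref{1.31}(1),(2)), the intermediate algebra reached after the last non-inert step is $\subseteq {}_S^tR$ while it is $t$-closed in $S$, hence it equals ${}_S^tR$. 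So if no pair of adjacent steps ever satisfies the conductor-equality case, we obtain a chain $\mathcal C'\ni {}_S^tR$ with $\ell(\mathcal C') = \ell(\mathcal C)$.

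If at some stage the conductor-equality case $(U:T) = (T:V)$ occurs with $U\subset T$ inert and $T\subset V$ non-inert, Lemma \ref{3.41}(2) gives $\ell[U,V] > 2$, so we may refine the two steps $U\subset T\subset V$ into a strictly longer chain of $[U,V]$; splicing this refinement into $\mathcal C$ (and then continuing to push inert steps up as before) yields $\mathcal C'$ with $\ell(\mathcal C') > \ell(\mathcal C)$. This proves the ``in particular'' equivalence: $\ell(\mathcal C')>\ell(\mathcal C)$ forces such a configuration $U\subset T\subset V$ somewhere in (a descendant of) $\mathcal C$, and conversely its presence is exactly what lets the length grow. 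For the final assertion, in that conductor-equality case set $M := (U:T)=(T:V)$ and let $P := M\cap R$; the refinement of Lemma \ref{3.41}(2) uses a minimal non-inert $R_1\in[U,T']$ with crucial ideal $M$, so $P\in\mathrm{MSupp}({}_S^tR/R)$ by Proposition \ref{1.14}, and since over $M$ the extension $U\subset V$ contributes an inert step above $T'$ with the same conductor, tracking crucial ideals through Theorem \ref{crucial}(2) and contracting primes of $\mathrm V_S((U:T))$ to ${}_S^tR$ gives $\{P\cap {}_S^tR\mid P\in\mathrm V_S((U:T))\}\subseteq \mathrm{MSupp}_{{}_S^tR}(S/{}_S^tR)$.

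The main obstacle I anticipate is bookkeeping the termination and the invariance of the ``target'' algebra under the bubble moves: I must check that after a Crosswise Exchange swap the newly created inert step can always be bubbled further up without re-creating, elsewhere, a conductor-equality obstruction that was not already ``morally present'' in $\mathcal C$ — i.e.\ that the dichotomy really is: either every maximal chain refining/rearranging $\mathcal C$ has length $\ell(\mathcal C)$, or the explicit local configuration $U\subset T\subset V$ appears. This is where one has to argue carefully that the conductors $(R_{i-1}:R_i)$ being distinct is a property that survives the exchange (the conductors of the two new steps are $P$ and $MS'$ in the notation of Lemma \ref{1.13}, which are distinct because they lie over distinct maximal ideals of $R$), so no new equality is manufactured. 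Once that is in hand, the length comparison and the support statements are a direct application of Theorems \ref{crucial} and \ref{minimal} together with Propositions \ref{1.14} and \ref{1.31}.
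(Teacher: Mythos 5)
Your overall strategy --- bubble-sort the minimal steps of $\mathcal C$ using Lemma \ref{3.41} as the elementary swap, with the conductor-equality case of that lemma as the only possible source of a strict length increase, and then read off the support statements --- is the paper's strategy. But you have the direction of the sort reversed, and as written the procedure heads away from ${}_S^tR$. Since $R\subseteq {}_S^tR$ is infra-integral and ${}_S^tR\subseteq S$ is $t$-closed (Proposition \ref{1.31}), a maximal chain contains ${}_S^tR$ precisely when all of its non-inert steps sit \emph{below} all of its inert steps. The obstruction to be removed is therefore an inert step immediately followed, above, by a non-inert one --- which is exactly the configuration Lemma \ref{3.41} is stated for, so no ``mirrored'' version is needed --- and the exchange in case (1) of that lemma converts it into non-inert-then-inert, i.e.\ it moves the inert step upward. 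You instead declare non-inert-then-inert to be the bad pattern and swap it into inert-then-non-inert; iterating that pushes the inert steps to the bottom and terminates at a chain whose lower part is $t$-closed and whose upper part is infra-integral, which in general does not contain ${}_S^tR$. The reversal also damages the ``in particular'' clause: the obstruction named in the statement is $U\subset T$ inert, $T\subset V$ non-inert with $(U:T)=(T:V)$, which is what Lemma \ref{3.41}(2) detects in the paper's orientation; your mirrored orientation would be probing a configuration the statement does not mention, and the paper never establishes a mirrored analogue of part (2).

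The inconsistency is internal to your write-up rather than a missing idea: your later sentences (``always moving inert steps upward past non-inert steps'', ``all inert steps come after all non-inert steps'') describe the correct procedure, so the slip is confined to the identification of the bad pattern and of what the exchange produces. Once corrected, your argument coincides with the paper's proof, including the observation that when ${}_S^tR\notin\mathcal C$ at least one bad pattern must occur (otherwise the chain splits into an infra-integral part followed by a $t$-closed part and already passes through ${}_S^tR$), the termination of the bubbling, and the derivation of $(T:V)\cap R\in\mathrm{MSupp}({}_S^tR/R)$ and of the inclusion into $\mathrm{MSupp}_{{}_S^tR}(S/{}_S^tR)$ in the conductor-equality case. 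Your remark that the exchange manufactures no new conductor equality (the two new crucial ideals lie over distinct maximal ideals of the base) is a point the paper leaves implicit, and is worth keeping.
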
 

\begin{proof} Let $\mathcal C$ be the maximal chain $R= R_0\subset\ldots\subset R_i \subset\ldots\subset R_n=S$, where $R_{i-1}\subset R_i$ is  minimal for each $i\in\mathbb{N}_n$. 

If there exists some $i$ such that $R_i={}_S^tR$, take $\mathcal C'=\mathcal C$.

Assume that $R_i\neq{}_S^tR$ for each $i\in\{0,\ldots,n\}$. In particular, ${}_S ^tR\neq R,S$, so that $R\subset S$ is neither t-closed, nor infra-integral. We claim that there exists some $i$ such that $ R_{i-1} \subset R_i$ is inert and $R_i\subset R_{i+1}$ is minimal 
  non-inert. 
 Deny, so that no inert  extension is followed by a minimal 
   non-inert
   extension. It follows that there exists $k\in\mathbb{N}_{n-1}$ such that $R_j\subset R_{j+1}$ is not inert for $j<k$ and $R_j\subset R_{j+1}$ is inert for $j\geq k$. Then, $R\subset R_k$ is infra-integral and $R_k\subset S$ is t-closed 
 according to Proposition ~\ref{1.31}. 
  It follows  that $R_k={}_S^tR$, a contradiction. An application of Lemma~\ref{3.41} gives a maximal chain from $R_{i-1}$ to $R_{i+1}$ containing at less two minimal extensions and with first, only   minimal 
    non-inert 
    extensions followed by only  inert  extensions.  Repeating this construction until we have first only minimal 
      non-inert
      extensions followed by only inert extensions, we get a maximal chain $\mathcal C'$ of $R$-subextensions of $S$ containing ${}_S^tR$ such that $\ell(\mathcal C')\geq \ell(\mathcal C)$.

Still using   Lemma~\ref{3.41}, we get that $\ell(\mathcal C')> \ell(\mathcal C)$ if and only if  there exist $U,T,V\in \mathcal C$ such that $U\subset T$ is inert, $T\subset V$ is minimal 
 non-inert,
  and $(U:T)=(T:V)$. Assume that these conditions hold, so that there exist $U',V'\in[U,V]$ such that $U\subset U'$ is minimal 
    non-inert
    and $V'\subset V$ is inert. In particular, the conductors of  these minimal extensions lie  above $M:=(U:T)=(T:V)=(U:V)$ in $U$. Using the previous construction to get a maximal chain $\mathcal C'$ of $R$-subextensions of $S$ containing ${}_S^tR$, we get that $N:=M\cap R\in \mathrm{MSupp}({}_S^tR/R)$. Indeed, $R_N\subset V_N$ is neither infra-integral, nor t-closed, so that $({}_S^tR)_N\neq R_N,S_N$. Moreover, since $U\subset T$ is inert, we get that $U/M\not\cong T/M$. For the same reason, let $P\in\mathrm{Max}(S)$ be lying over $M$, so that $R/(M\cap R)\not\cong S/P$, giving $P\cap {}_S^tR\in \mathrm{MSupp}_{{}_S^tR}(S/{}_S^tR)$, because $U_M\subset S_M$ is not infra-integral. Then,   $\{P\cap {}_S^tR\mid P\in \mathrm{V}_S(M)\}\subseteq \mathrm{MSupp}_{{}_S^tR}(S/{}_S^tR) $.
\end{proof}

 Let $R\subseteq S$ be a ring extension. Then  $R$ is said {\it unbranched } in $S$ if $\overline R$ is local. We recall the following Lemma.
 
\begin{lemma} \label{3.43} \cite[Lemma 3.29]{Pic 11} Let $R \subset S$ be an  FCP ring extension such that $R$ is unbranched  in $S$. Then, $T$ is local for each $T\in [R,S]$.
\end{lemma}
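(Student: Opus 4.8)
The plan is to reduce to the case where $R$ itself is local, since local rings are certainly unbranched in anything and a local ring's overalgebras that we want to control are the very ones we are proving are local. First I would recall that the extension $R\subset S$ is integral: indeed $R$ is unbranched in $S$ means $\overline R$ is local, and by Theorem \ref{crucial}(2) together with Proposition \ref{1.14} the flat-epimorphic part $\overline R\subseteq S$ is Pr\"ufer, so its intermediate rings correspond to opens of $\mathrm{Spec}(\overline R)$; since $\overline R$ is local the only flat epimorphic subextension is $\overline R=S$ itself when $\overline R$ is its own integral closure — more precisely, a Pr\"ufer FCP extension of a local ring has all intermediate rings local (this is essentially \cite[Proposition 6.12 or Theorem 6.3]{DPP2}). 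So it suffices to show every $T\in[R,\overline R]$ is local, and then every $T\in[\overline R,S]$ is local, and finally to handle a general $T\in[R,S]$ by factoring through $T\cap\overline R$ (which is the integral closure of $R$ in $T$) using that $\overline{T\cap\overline R}=T\cap\overline R$ is local as a subring of the local ring $\overline R$, hence $T$ is unbranched in itself in the integral-plus-Pr\"ufer sense.

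Second I would do the genuinely integral case: $R\subset \overline R$ integral FCP with $\overline R$ local, show every $T\in[R,\overline R]$ is local. Pick a maximal chain $R=R_0\subset R_1\subset\cdots\subset R_n=\overline R$ refining $R\subseteq T$, so each step is a minimal integral extension. By Theorem \ref{minimal} each minimal step $R_i\subset R_{i+1}$ is inert, decomposed, or ramified; inert and ramified steps preserve the property "$\mathrm{Max}$ has the same cardinality as that of the subring" (in the inert and ramified cases $\mathrm{Spec}(R_{i+1})\to\mathrm{Spec}(R_i)$ is bijective by Proposition \ref{1.31} and Theorem \ref{minimal}), while a decomposed step strictly increases the number of maximal ideals lying over the crucial ideal. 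Since $\overline R$ is local, walking up the chain we can never perform a decomposed step (once $R_i$ has $\geq 2$ maximal ideals, every further extension still has $\geq 2$, so $\overline R$ would not be local); hence all steps are inert or ramified, so $\mathrm{Spec}(R_{i})\to\mathrm{Spec}(R_{i-1})$ is bijective at every stage, and in particular $R\to R_i\to T$ all have $\mathrm{Spec}$ of cardinality $1$ for $\mathrm{Max}$, i.e. each $R_i$ — and hence $T$ — is local. (Alternatively one quotes \cite[Lemma 3.17]{DPP3} or the cited \cite[Lemma 3.29]{Pic 11} directly: the point is that in an integral FCP extension with $\overline R$ local there are no decomposed minimal steps.)

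The main obstacle is the decomposed case and, more subtly, making the "no decomposed step can occur below a local ring" argument airtight when $T$ is not assumed to sit inside $\overline R$: a priori $T$ could contribute maximal ideals that collapse again higher up in $[R,S]$. This is exactly why the reduction to the integral closure is essential — a flat epimorphism $R_i\subset R_{i+1}$ does not create maximal ideals (it restricts $\mathrm{Spec}$ to an open subset), so it cannot "undo" a decomposed step, and therefore the number of maximal ideals of the rings along any maximal chain is nondecreasing once we are past $\overline R$ — but it is weakly monotone only after we separate the integral part from the Pr\"ufer part. So the careful bookkeeping is: (i) $R\subseteq\overline R$ integral, no decomposed steps because $\overline R$ local, so everything in $[R,\overline R]$ is local; (ii) $\overline R\subseteq S$ Pr\"ufer with $\overline R$ local, so everything in $[\overline R,S]$ is local by the Pr\"ufer-local structure theory; (iii) a general $T\in[R,S]$ has integral closure $T\cap\overline R$ local by (i), hence $T$ is local by applying (ii) to $T\cap\overline R\subseteq T$. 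I would present (i) as the heart of the argument and cite \cite{DPP2} for (ii), with Theorem \ref{3.2}-style factorization through the integral closure providing (iii).

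Since this lemma is stated as a recalled result with an explicit citation to \cite[Lemma 3.29]{Pic 11}, the cleanest write-up simply invokes that reference; but the self-contained argument above via "no decomposed minimal subextension lies below a local integral closure" plus the local structure of Pr\"ufer FCP extensions is what underlies it.
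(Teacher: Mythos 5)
The paper does not actually prove this lemma: it is recalled verbatim with the citation \cite[Lemma 3.29]{Pic 11}, so there is no in-text argument to compare against. Your self-contained proof is essentially correct, and its skeleton (integral part below $\overline R$, Pr\"ufer part above $\overline R$, then a general $T$ handled by splitting $R\subseteq T$ at its integral closure $T\cap\overline R$) is the natural one. Two remarks. First, your opening assertion that ``the extension $R\subset S$ is integral'' is false as stated -- unbranched does not force integrality -- but you immediately retreat from it, and the claim you actually use, namely that a Pr\"ufer FCP extension of a local ring has only local intermediate rings, is correct and is indeed what \cite[Theorem 6.3 and Proposition 6.12]{DPP2} deliver (over a local base an integrally closed FCP extension is a chain of localizations $R_P$ along a finite chain of primes); the heuristic ``intermediate rings correspond to opens of $\mathrm{Spec}$'' is not by itself the right justification. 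Second, your analysis of the integral part via counting maximal ideals along a maximal chain (no decomposed step can occur below a local ring) is valid but heavier than necessary: for any $T\in[R,\overline R]$ the extension $T\subseteq\overline R$ is integral, so lying-over makes $\mathrm{Max}(\overline R)\to\mathrm{Max}(T)$, $N\mapsto N\cap T$, surjective, whence $|\mathrm{Max}(T)|\le 1$ and $T$ is local in one line. With that simplification, and citing that $\overline R\subseteq S$ is Pr\"ufer for an FCP extension (used throughout the paper, e.g.\ in the proof of Theorem \ref{3.2}), your step (iii) can also be streamlined: $\overline{T}^{\,S}\in[\overline R,S]$ is local by the Pr\"ufer part, and $T\subseteq\overline{T}^{\,S}$ is integral, so $T$ is local again by lying-over.
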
 

\begin{theorem}\label{3.5}  An integral FCP unbranched  extension $R\subseteq S$ is catenarian if and only if $[R,S]=[R,{}_S^tR]\cup[{}_S^tR,S]$ and ${}_S^tR\subseteq S$ is catenarian. 
\end{theorem}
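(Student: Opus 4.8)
The plan is to dispatch the two implications separately. The backward implication needs no new ideas: assuming $[R,S]=[R,{}_S^tR]\cup[{}_S^tR,S]$ and that ${}_S^tR\subseteq S$ is catenarian, I note that $R\subseteq{}_S^tR$ is infra-integral, hence catenarian by Proposition~\ref{1.5}, so Proposition~\ref{3.411}(2) applies directly and yields that $R\subseteq S$ is catenarian. (The unbranched hypothesis plays no role here.) For the forward implication, assume $R\subseteq S$ is catenarian. Then ${}_S^tR\subseteq S$ is catenarian by Proposition~\ref{1.4}, so everything comes down to proving the combinatorial statement $[R,S]=[R,{}_S^tR]\cup[{}_S^tR,S]$, i.e. that every $T\in[R,S]$ is comparable to $T':={}_S^tR$.

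I would prove this by contradiction. Suppose $T\in[R,S]$ is incomparable to $T'$, and put $A:=T\cap T'$; then $A\subsetneq T$ and $A\subsetneq T'$. Since $R\subseteq S$ is integral and $R$ is unbranched in $S$, the integral closure $\overline R=S$ is local, so by Lemma~\ref{3.43} every ring in $[R,S]$ is local; in particular $A$, $T$ and all rings between them are local. Two structural facts are needed. First, $A={}_T^tR$: indeed $R\subseteq A$ is infra-integral, being a restriction of the infra-integral extension $R\subseteq{}_S^tR$, so $A\subseteq{}_T^tR$; conversely $R\subseteq{}_T^tR$ is infra-integral with ${}_T^tR\in[R,S]$, so ${}_T^tR\subseteq{}_S^tR=T'$ by the maximality in Definition~\ref{1.3}, whence ${}_T^tR\subseteq T\cap T'=A$. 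Thus $A\subseteq T$ is t-closed. Second, $T\subseteq S$ is \emph{not} t-closed: otherwise $T$ would be a t-closed-in-$S$ overring of $R$, forcing $T'={}_S^tR\subseteq T$ and contradicting the incomparability of $T$ and $T'$; consequently ${}_S^tT\supsetneq T$.

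Now I would manufacture the configuration forbidden by Proposition~\ref{3.42}. Since $A\subsetneq T$ and $A\subseteq T$ is t-closed, a maximal chain of $[A,T]$ has all its minimal steps inert by Proposition~\ref{1.31}(1); let $U\subset T$ be its last step, an inert minimal extension. Since $T\subsetneq{}_S^tT$ and $T\subseteq{}_S^tT$ is infra-integral, Proposition~\ref{1.31}(2) provides a minimal non-inert extension $T\subset V$ inside ${}_S^tT$. As $U$ and $T$ are local, Theorem~\ref{minimal} together with Theorem~\ref{crucial}(2) shows that $(U:T)=\mathcal{C}(U,T)$ is the maximal ideal of $U$; being inert, this ideal also lies in $\mathrm{Max}(T)$, hence equals the maximal ideal $\mathfrak m$ of the local ring $T$. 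Likewise $(T:V)=\mathcal{C}(T,V)\in\mathrm{Max}(T)$ equals $\mathfrak m$. Therefore $(U:T)=(T:V)$. Prepending a maximal chain of $[R,U]$ (through $A$) and appending a maximal chain of $[V,S]$ to $U\subset T\subset V$ produces a maximal chain $\mathcal C$ of $[R,S]$ containing this configuration, and Proposition~\ref{3.42} then yields a maximal chain $\mathcal C'$ of $[R,S]$ with $\ell(\mathcal C')>\ell(\mathcal C)$. This contradicts the catenarity of $R\subseteq S$, so no such $T$ exists and $[R,S]=[R,{}_S^tR]\cup[{}_S^tR,S]$.

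The step I expect to be the crux, and the reason the unbranched hypothesis is indispensable, is the conductor equality $(U:T)=(T:V)$: if $T$ were not local these two crucial maximal ideals could be distinct, and then Crosswise Exchange (Lemma~\ref{1.13}) would merely reshuffle the chain $U\subset T\subset V$ without altering its length, producing no contradiction. The other two delicate points are the identification $A={}_T^tR$ and the failure of $t$-closedness of $T\subseteq S$, but both are immediate consequences of the extremal descriptions of integral closure and $t$-closure recorded in Definition~\ref{1.3}, so they should not cause real difficulty.
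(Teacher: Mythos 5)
Your proposal is correct and follows essentially the same route as the paper: the backward direction via Proposition~\ref{3.411}(2), and the forward direction by contradiction, using Lemma~\ref{3.43} to make every intermediate ring local, producing an inert minimal step $U\subset T$ out of the $t$-closed extension ${}_T^tR\subset T$ followed by a non-inert minimal step $T\subset V$, equating the two conductors with the maximal ideal of the local ring $T$, and deriving the contradiction from Lemma~\ref{3.41}/Proposition~\ref{3.42}. The only cosmetic differences are your (valid) identification ${}_T^tR=T\cap{}_S^tR$ and your appeal to Proposition~\ref{3.42} where the paper cites Lemma~\ref{3.41}(2) directly.
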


\begin{proof} Since $R$ is unbranched  in $S$, any element of $[R,S]$ is local by Lemma \ref{3.43}. 

One implication of the statement is Proposition  \ref{3.411}. Conversely, assume that $R\subseteq S$ is catenarian. Then ${}_S^tR\subseteq S$ is catenarian by Proposition \ref{1.4}. We claim that $[R,S]=[R,{}_S^tR]\cup[{}_S^tR,S]$. Deny, so that there exists some $T\in [R,S]\setminus [R,{}_S^tR]\cup[{}_S^tR,S]$. In particular, $R\subset T$ is not infra-integral and $T\subset S$ is not t-closed. Then $T':={}_T^tR\neq T$ and  $T'\subset T$ is t-closed. Since $R\subseteq S$ is FCP, there exist $U\in[T',T],\ V\in [T,S]$ such that $U\subset T$ is minimal inert and $T\subset V$ is minimal ramified (it cannot be minimal decomposed, because $V$ is local). Let $M$ be the maximal ideal of $U$, so that $M=(U:T)\in\mathrm {Max}(T')$, because $T'\subseteq U$ is t-closed \cite[Lemma 3.17]{DPP3},  giving $M=(T':V)$ because $M\in\mathrm {Max}(T)$ and $M=(T:V)$. Then Lemma   \ref{3.41} shows that $\ell[U,V]>2$ which yields that $U\subset V$ is not catenarian, a contradiction. To conclude, $[R,S]=[R,{}_S^tR]\cup[{}_S^tR,S]$.
\end{proof} 

\begin{proposition}\label{3.6}  Let $R\subseteq S$ be an integral FCP extension such that ${}_S^tR\subseteq S$ is catenarian. Setting  $\mathcal M_1:=\{N\in\mathrm {Max}(S)\mid N\cap R\in\mathrm {MSupp}({}_S^tR/R)\}$ and $\mathcal M_2:=\{N\in\mathrm {Max}(S)\mid N\cap R\in\mathrm{MSupp}(S/{}_S^tR)\}$, the following statements hold: 
\begin{enumerate}
\item  $\mathcal M_1\cup\mathcal M_2=\{N\in\mathrm {Max}(S)\mid N\cap R\in\mathrm {MSupp}(S/R)\}$.

\item $R\subseteq S$ is catenarian if and only if, for each $U,T,V\in[R,S]$ such that $U\subset T$ is minimal inert and $T\subset V$ is minimal 
  non-inert,
  and for each $N\in\mathcal M_1, N'\in\mathcal M_2$ such that $(U:T)=N'\cap T$ and $(T:V)=N\cap T$, we have $N\neq N'$. 
\item If $\mathcal M_1\cap\mathcal M_2=\emptyset$, then $R\subseteq S$ is catenarian.
\end{enumerate}
\end{proposition}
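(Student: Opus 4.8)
The plan is to prove the three statements in sequence, using Proposition~\ref{3.42} as the principal engine and Proposition~\ref{3.1} (Pr\"ufer extensions are catenarian — applicable since ${}_S^tR\subseteq S$, being integral and t-closed, is actually Pr\"ufer... no, rather) together with Theorem~\ref{3.2}-type reductions. Let me reorganize: the key point is that by Proposition~\ref{3.411}(1), \emph{every} maximal chain of $[R,S]$ passing through ${}_S^tR$ has the fixed length $m+r$ where $m=\ell[R,{}_S^tR]$ and $r=\ell[{}_S^tR,S]$, and that $r$ is well-defined because ${}_S^tR\subseteq S$ is assumed catenarian. So $R\subseteq S$ is catenarian if and only if \emph{no} maximal chain has length strictly greater than $m+r$, and by Proposition~\ref{3.42} this happens precisely when no maximal chain $\mathcal C$ admits a "crossing" triple $U\subset T\subset V$ with $U\subset T$ inert, $T\subset V$ minimal non-inert, and $(U:T)=(T:V)$.

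For statement (1), I would argue as follows. Every $N\in\mathrm{Max}(S)$ with $N\cap R\in\mathrm{MSupp}(S/R)$ must lie over a prime in the support, and by Proposition~\ref{1.14} applied to a maximal chain of $[R,S]$ refining the chain $R\subseteq {}_S^tR\subseteq S$, each such prime $P=N\cap R$ is the trace of a crucial ideal of one of the minimal steps, which lies either in the $[R,{}_S^tR]$ part or the $[{}_S^tR,S]$ part. Hence $P\in\mathrm{MSupp}({}_S^tR/R)\cup\mathrm{MSupp}(S/{}_S^tR)$, giving $N\in\mathcal M_1\cup\mathcal M_2$. Conversely, both $\mathcal M_1$ and $\mathcal M_2$ are visibly contained in $\{N\mid N\cap R\in\mathrm{MSupp}(S/R)\}$ since $\mathrm{MSupp}({}_S^tR/R)\cup\mathrm{MSupp}(S/{}_S^tR)\subseteq\mathrm{MSupp}(S/R)$ by Proposition~\ref{1.14} applied to the refined chain. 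This gives the equality of (1).

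For statement (2), I would translate the obstruction in Proposition~\ref{3.42}. If $R\subseteq S$ is not catenarian, there is a maximal chain $\mathcal C$ and a triple $U\subset T\subset V$ in $\mathcal C$ with $U\subset T$ inert, $T\subset V$ minimal non-inert, $M:=(U:T)=(T:V)$. The last paragraph of the proof of Proposition~\ref{3.42} shows $M\cap R\in\mathrm{MSupp}({}_S^tR/R)$ and, for any $P\in\mathrm{Max}(S)$ lying over $M$, that $P\cap{}_S^tR\in\mathrm{MSupp}_{{}_S^tR}(S/{}_S^tR)$, hence $P\cap R\in\mathrm{MSupp}(S/{}_S^tR)$. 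The crucial ideal of $T\subset V$ (a non-inert minimal extension lying over $M$) has a maximal ideal $N$ of $V$ over it with $(T:V)=N\cap T$, and since the extension continues up to $S$ there is $N\in\mathrm{Max}(S)$ with $N\cap T=M$; similarly the inert step $U\subset T$ forces a maximal ideal $N'$ of $S$ with $N'\cap T=M$. From the infra-integral vs.\ t-closed dichotomy (as in the proof of~\ref{3.42}), $N$ "records" a ramified/decomposed behavior so $N\in\mathcal M_1$ while $N'$ records inert behavior so $N'\in\mathcal M_2$... I would need to be careful here about which of $\mathcal M_1,\mathcal M_2$ each belongs to, but the point is: the existence of the crossing triple forces two distinct maximal ideals $N\in\mathcal M_1$, $N'\in\mathcal M_2$ of $S$ both contracting to $M$ in $T$, which is exactly the failure of the condition in (2). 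Conversely, if such $N\neq N'$ exist, Lemma~\ref{3.41}(2) or the Crosswise Exchange produces a maximal chain through the relevant local piece that is longer than $m+r$, so $R\subseteq S$ is not catenarian. The main obstacle will be bookkeeping the correspondence between "a crossing triple in a chain" and "two distinct maximal ideals of $S$ over a common ideal", in particular checking that $N$ lands in $\mathcal M_1$ and $N'$ in $\mathcal M_2$ (and not both in the same $\mathcal M_i$); this requires tracing, via Proposition~\ref{1.31}, that the fibre $\mathrm{V}_S((U:T))$ splits according to ramified versus inert contributions and using that $U\subset T$ inert keeps residue fields from collapsing while $T\subset V$ non-inert does collapse or split them.

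For statement (3), this is immediate from (2): if $\mathcal M_1\cap\mathcal M_2=\emptyset$, then one can never have $N\in\mathcal M_1$ and $N'\in\mathcal M_2$ with $N=N'$, so the condition "$N\neq N'$" in (2) holds vacuously whenever the hypotheses on $U,T,V$ are met — indeed if $(U:T)=N'\cap T$ with $N'\in\mathcal M_2$ and $(T:V)=N\cap T$ with $N\in\mathcal M_1$, then $N\in\mathcal M_1$ and $N'\in\mathcal M_2$ are forced to be distinct by disjointness. Hence by (2), $R\subseteq S$ is catenarian. I would present (3) in one or two lines right after (2).
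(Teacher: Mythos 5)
Your parts (1) and (3) are fine and agree with the paper, but part (2) contains a genuine logical error: you have inverted the biconditional. The statement asserts that $R\subseteq S$ is catenarian if and only if every admissible pair $(N,N')\in\mathcal M_1\times\mathcal M_2$ with $(T:V)=N\cap T$ and $(U:T)=N'\cap T$ satisfies $N\neq N'$. Hence the \emph{failure} of catenarity must be witnessed by a pair with $N=N'$, i.e.\ by a single maximal ideal $N\in\mathcal M_1\cap\mathcal M_2$ contracting to $(U:T)=(T:V)$ in $T$. You instead claim that the crossing triple of Proposition~\ref{3.42} ``forces two distinct maximal ideals $N\in\mathcal M_1$, $N'\in\mathcal M_2$ \ldots\ which is exactly the failure of the condition in (2)'' --- but exhibiting two \emph{distinct} such ideals would \emph{verify} the condition, not falsify it; and your converse direction (``if such $N\neq N'$ exist \ldots\ then $R\subseteq S$ is not catenarian'') asserts the negation of the equivalence you are supposed to prove. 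Note also that if your distinctness claim were correct, your own part (3) would make every such extension catenarian, which is false.

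The source of the confusion is your idea that membership in $\mathcal M_1$ versus $\mathcal M_2$ is decided by whether $N$ ``records'' non-inert or inert behaviour. Both sets are defined solely by the trace $N\cap R$ (in $\mathrm{MSupp}({}_S^tR/R)$ for $\mathcal M_1$, in $\mathrm{MSupp}(S/{}_S^tR)$ for $\mathcal M_2$), and these two conditions are not mutually exclusive. The paper's argument is precisely that when $(U:T)=(T:V)=:M$, the final paragraph of the proof of Proposition~\ref{3.42} gives $M\cap R\in\mathrm{MSupp}({}_S^tR/R)$ \emph{and} $N\cap{}_S^tR\in\mathrm{MSupp}_{{}_S^tR}(S/{}_S^tR)$ for every $N\in\mathrm{V}_S(M)$, so that any single such $N$ lies in $\mathcal M_1\cap\mathcal M_2$ and, taken together with $N'=N$, violates the condition ``$N\neq N'$''. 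Conversely, a violating pair $N=N'$ forces $(U:T)=N\cap T=(T:V)$, and Proposition~\ref{3.42} (via Lemma~\ref{3.41}(2)) then produces a maximal chain strictly longer than the one through ${}_S^tR$, so the extension is not catenarian. Your write-up never establishes the one coincident ideal that the argument actually needs.
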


\begin{proof} (1) Obvious since $N\cap R\in\mathrm {MSupp}(S/{}_S^tR)$ for any $N\in\mathcal M_2$ and $\mathrm {MSupp}(S/R)=\mathrm {MSupp}({}_S^tR/R)\cup\mathrm {MSupp}(S/{}_S^tR)$.

(2) Set $n:=\ell[R,S]$. 
By  Propositions   \ref{3.411} and  \ref{3.42}, any chain containing ${}_S^tR$ has length $n$ and   $R\subseteq S$ is not catenarian if and only if there exist $U,T,V\in[R,S]$ such that $U\subset T$ is  inert and $T\subset V$ is minimal 
  non-inert,
  with $(U:T)=(T:V)$. Assume that  $R\subseteq S$ is not catenarian. Then,  $(T:V)\cap R\in \mathrm{MSupp}({}_S^tR/R)$ and $\{N\cap {}_S^tR\mid N\in \mathrm{V}_S((U:T))\}\subseteq\mathrm{MSupp}_{{}_S^tR}(S/{}_S^tR)$. Let $N\in\mathrm{V}_S((U:T))$. It follows that $N\cap R=(T:V)\cap R\in \mathrm{MSupp}({}_S^tR/R)$, so that $N\in \mathcal M_1$. Moreover, $N\cap {}_S^tR\in\mathrm{MSupp}_{{}_S^tR}(S/{}_S^tR)$ implies that $N\cap R\in\mathrm{MSupp}_R(S/{}_S^tR)$, so that $N\in \mathcal M_2$. Then, $N\in \mathcal M_1\cap\mathcal M_2$. Conversely, if there exists some $N\in \mathcal M_1\cap\mathcal M_2$, such that $(U:T)=N\cap T$ and $(T:V)=N\cap T$ for some $U,T,V\in[R,S]$ such that $U\subset T$ is minimal inert and $T\subset V$ is minimal 
    non-inert,
    then $(U:T)=(T:V)$ so that  $R\subseteq S$ is not catenarian. 
 
 To conclude, 
 $R\subseteq S$ is catenarian if and only if, for each $U,T,V\in[R,S]$ such that $U\subset T$ is minimal inert and $T\subset V$ is minimal 
   non-inert,
   and for each $N\in\mathcal M_1, N'\in\mathcal M_2$ such that $(U:T)=N'\cap T$ and $(T:V)=N\cap T$, we have $N\neq N'$.
 
 (3) If $\mathcal M_1\cap\mathcal M_2=\emptyset$, then (2) shows that $R\subseteq S$ is catenarian.
\end{proof}

If $R\subset S$ is a ring extension and $n$ a positive integer, an $n$-minimal subextension is an extension $U\subseteq V$ where $U, V \in [R,S]$, which is a tower of $n$ minimal extensions.
We say that $R\subset S$ is $2$-{\it catenarian} 
if each of its  $2$-minimal subextension has length $2$.

 This definition will  allow us to give a simpler characterization of  catenarity than in Proposition \ref{3.6}. 
 Moreover, we gave in \cite{Pic 6} a complete characterization of  extensions of length 2, which play a fundamental  role in the lattice properties of extensions.

\begin{theorem}\label{3.62}   An integral FCP extension $R\subseteq S$  is catenarian  if and only if ${}_S^tR\subseteq S$ is catenarian   and $R\subseteq S$ is $2$-catenarian. 
\end{theorem}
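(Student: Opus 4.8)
The plan is to show that catenarity of an integral FCP extension $R \subseteq S$ is equivalent to the conjunction of two much more tractable conditions: catenarity of the $t$-closed layer ${}_S^tR \subseteq S$, and the purely ``local'' condition that no $2$-minimal subextension collapses to length $2$ in the wrong way. The forward implication is essentially free: if $R \subseteq S$ is catenarian, then by Proposition \ref{1.4} every subextension is catenarian, so in particular ${}_S^tR \subseteq S$ is catenarian, and every $2$-minimal subextension $U \subseteq V$ (being a subextension) is catenarian, hence has length $2$; thus $R \subseteq S$ is $2$-catenarian.

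For the converse, assume ${}_S^tR \subseteq S$ is catenarian and $R \subseteq S$ is $2$-catenarian. The key prior input is Proposition \ref{3.42}: for any maximal chain $\mathcal C$ of $[R,S]$ there is a maximal chain $\mathcal C'$ containing ${}_S^tR$ with $\ell(\mathcal C') \geq \ell(\mathcal C)$, and moreover $\ell(\mathcal C') > \ell(\mathcal C)$ precisely when $\mathcal C$ contains a configuration $U \subset T \subset V$ with $U \subset T$ inert, $T \subset V$ minimal non-inert, and $(U:T) = (T:V)$. First I would invoke Proposition \ref{3.411}(1): since ${}_S^tR \subseteq S$ is catenarian (and $R \subseteq {}_S^tR$ is catenarian by Proposition \ref{1.5}), every maximal chain through ${}_S^tR$ has the common length $m + r$ where $m = \ell[R,{}_S^tR]$, $r = \ell[{}_S^tR,S]$. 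So it suffices to show that every maximal chain $\mathcal C$ of $[R,S]$ has length $m+r$, and by Proposition \ref{3.42} it is enough to rule out the bad configuration: I must show that $2$-catenarity forbids the existence of $U \subset T \subset V$ in $[R,S]$ with $U \subset T$ inert, $T \subset V$ minimal non-inert, and $(U:T) = (T:V)$.

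This is the crux, and it reduces to Lemma \ref{3.41}: exactly that lemma says that when $(R':T') \neq (T':S')$ the tower $R' \subset T' \subset S'$ has length $2$, while when $(R':T') = (T':S')$ it has length $\ell[R',S'] > 2$. So if the bad configuration $U \subset T \subset V$ existed in $[R,S]$, then $U \subseteq V$ would be a $2$-minimal subextension with $(U:T) = (T:V)$, and Lemma \ref{3.41}(2) would give $\ell[U,V] > 2$, contradicting $2$-catenarity. Hence no such configuration exists, every maximal chain of $[R,S]$ has length equal to the length of a maximal chain through ${}_S^tR$, namely $m+r$, and $R \subseteq S$ is catenarian.

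The main obstacle is not any single hard computation but making sure the Proposition \ref{3.42}/Lemma \ref{3.41} machinery is invoked with the hypotheses in the exact form stated: in particular one should note that $2$-catenarity applies to the specific subextension $[U,V]$, that Lemma \ref{3.41} is applicable because $U \subset T$ is inert and $T \subset V$ is non-inert, and that Proposition \ref{3.42} genuinely characterizes the length gap by precisely this configuration — so that its absence forces $\ell(\mathcal C') = \ell(\mathcal C)$ for all $\mathcal C$. The degenerate cases ${}_S^tR \in \{R,S\}$ should be dispatched at the outset exactly as in the proof of Proposition \ref{3.411}.
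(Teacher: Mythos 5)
Your proof is correct and follows essentially the same route as the paper: reduce via Proposition \ref{3.411} and Proposition \ref{3.42} to excluding the configuration $U\subset T$ inert, $T\subset V$ minimal non-inert with $(U:T)=(T:V)$, and then use $2$-catenarity to rule it out. The only (harmless) difference is that you obtain the final contradiction by citing Lemma \ref{3.41}(2) directly, whereas the paper re-derives that step inline by constructing $U_1$ with $U\subset U_1$ minimal non-inert and invoking \cite[Propositions 7.1 and 7.4]{DPPS} to get $\ell[U,TU_1]>2$ --- the same content, so your shortcut is legitimate.
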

\begin{proof} One implication is obvious. Conversely, assume that 
${}_S^tR\subseteq S$ is catenarian,
 $R\subseteq S$ is $2$-catenarian but not catenarian. The assumption of the Theorem is the same as in 
  the proof of Proposition \ref{3.6}. Using this
   proof and Proposition \ref{3.42}, $R\subseteq S$ is not catenarian if and only if  there exist $U,T,V\in[R,S]$ such that $U\subset T$ is  inert and $T\subset V$ is minimal  non-inert, with   $(U:T)=(T:V)$. For such $U,T,V$, set $M:=(U:T)=(T:V)$, so that $M=(U:V)\in\mathrm{Max}(U)$. Because $R\subseteq S$ is $2$-catenarian, we get that $\ell[U,V]=2$. Since $U\subset V$ is neither t-closed, nor infra-integral, $T':={}_V^tU\neq U,V$. Let $U_1\in[U,T']$ be such that $U\subset U_1$ is minimal either ramified or decomposed since $U\subset T'$ is infra-integral. Then, $(U:U_1)=M$ and $TU_1\subseteq V$, which implies that $\ell[U,V]=2\geq\ell[U,TU_1]>2$ according to \cite[Propositions 7.1 and 7.4]{DPPS}, a contradiction. It follows that $R\subseteq S$ is  catenarian.
\end{proof}

\begin{corollary}\label{3.63}   An  integral FCP extension $R\subseteq S$ such that   ${}_S^tR\subseteq S$ is catenarian   and $[R,S]=[R,{}_S^tR]\cup[ {}_S^tR, S]$ is catenarian. 
\end{corollary}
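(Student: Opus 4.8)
The plan is to derive the statement as a direct consequence of Theorem~\ref{3.62}, using only the trivial observation that the hypothesis $[R,S]=[R,{}_S^tR]\cup[{}_S^tR,S]$ forces $R\subseteq S$ to be $2$-catenarian. So the whole proof reduces to verifying that implication and then quoting Theorem~\ref{3.62}.

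First I would take an arbitrary $2$-minimal subextension $U\subseteq V$ of $R\subseteq S$, say with $U\subset W\subset V$ a tower of two minimal extensions, and I want to show $\ell[U,V]=2$. Since every element of $[R,S]$ lies in $[R,{}_S^tR]$ or in $[{}_S^tR,S]$, in particular $U$, $W$, $V$ each lie in one of the two pieces. The key case analysis is on where ${}_S^tR$ sits relative to $U$ and $V$: if $V\subseteq {}_S^tR$ then $R\subseteq V$ is infra-integral, hence $U\subseteq V$ is infra-integral, hence catenarian by Proposition~\ref{1.5}, giving $\ell[U,V]=2$; symmetrically if $U\supseteq {}_S^tR$ then $U\subseteq V$ is a subextension of ${}_S^tR\subseteq S$, which is catenarian by hypothesis, so again $\ell[U,V]=2$. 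The remaining possibility is $U\subset {}_S^tR\subset V$ (or with one inclusion an equality, which is covered by the previous two cases), and then because $[R,S]$ is the union of the two intervals, ${}_S^tR$ is comparable with $W$; since $U\subset W\subset V$ with both steps minimal and $U\subset {}_S^tR\subset V$, either $W\subseteq {}_S^tR$, forcing $W={}_S^tR$ by minimality of $U\subset W$ together with $U\subset {}_S^tR$, or $W\supseteq {}_S^tR$, forcing $W={}_S^tR$ likewise. In all cases ${}_S^tR\in\{U,W,V\}$, so the chain $U\subset W\subset V$ passes through ${}_S^tR$; splitting it there, one half lies in the infra-integral interval $[R,{}_S^tR]$ and the other in the catenarian interval $[{}_S^tR,S]$, and in either interval a $1$-minimal step cannot be refined, so $\ell[U,V]=2$. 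Hence $R\subseteq S$ is $2$-catenarian.

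Having established that $R\subseteq S$ is $2$-catenarian and knowing ${}_S^tR\subseteq S$ is catenarian by hypothesis, Theorem~\ref{3.62} gives immediately that $R\subseteq S$ is catenarian. The only mild subtlety — the step I would be most careful about — is the bookkeeping in the mixed case to be sure that ${}_S^tR$ genuinely appears as one of $U$, $W$, $V$ and that no $2$-minimal chain can sneak past it; this is exactly the phenomenon that Lemma~\ref{3.41} and Proposition~\ref{3.42} are designed to control, but under the present hypothesis the union condition makes it elementary. Alternatively, and even more briefly, one could bypass the $2$-catenarian language entirely and argue as in Proposition~\ref{3.411}(2): any maximal chain of $[R,S]$ must pass through ${}_S^tR$ (a maximal chain cannot skip over it, since every intermediate ring is comparable to ${}_S^tR$ and a minimal step straddling it is impossible), and then Proposition~\ref{3.411}(1) forces every maximal chain to have length $\ell[R,{}_S^tR]+\ell[{}_S^tR,S]$. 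I would present whichever of these two routes is shorter; both are essentially immediate given the machinery already in place.
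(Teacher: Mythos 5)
Your proposal is correct and follows essentially the same route as the paper: invoke Theorem~\ref{3.62} and verify $2$-catenarity by a case analysis on where ${}_S^tR$ sits relative to the three rings of a $2$-minimal chain (the paper's proof does exactly this, and your ``alternative route'' is just Proposition~\ref{3.411}(2), which the paper has already proved). One tiny slip: in the subcase $W\subseteq {}_S^tR$ of the mixed case it is the minimality of $W\subset V$ (not of $U\subset W$) that forces $W={}_S^tR$, since ${}_S^tR\in[W,V]=\{W,V\}$ and ${}_S^tR\neq V$; the conclusion is unaffected.
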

\begin{proof} According to Theorem \ref{3.62}, it is enough to show that $R\subseteq S$ is $2$-catenarian to get that $R\subseteq S$ is catenarian. So, let $T,U,V\in[R,S]$ such that $T\subset U$ and $U\subset V$ are minimal. Then, either $U\in[R,{}_S^tR[\ (*)$ or $U={}_S^tR\ (**)$ or $U\in] {}_S^tR, S]\ (***)$. In case $(*)$, we have $T,U,V\in[R,{}_S^tR]$, because $U\subset V$ is minimal, so that $V\not\in] {}_S^tR, S]$ since $U\subset {}_S^tR$. Then, $\ell[T,V]=2$ by Proposition \ref{1.5}. In case $(**)$, we have  $U={}_S^tR$. It follows that $T\in[R,{}_S^tR[$ and $V\in]{}_S^tR,S]$. We claim that $[T,V]=\{T,U,V\}$. Deny, and let $U'\in[T,V]\setminus\{T,U,V\}$. Then, $U'\neq {}_S^tR$, which implies $U'\in[R,U[\cup] U, S]$. If $U'\in[R,U[$, we have $T\subset U'\subset U$, a contradiction since $T\subset U$ is minimal. If $U'\in]U,S]$, we have $U\subset U'\subset V$,  a contradiction since $U\subset V$ is minimal. Then, $[T,V]=\{T,U,V\}$ and $\ell[T,V]=2$. In case $(***)$,  we have $T,U,V\in[{}_S^tR,S]$, because $T\subset U$ is minimal, so that $T\not\in[R, {}_S^tR[$ since $ {}_S^tR\subset U$. Then, $\ell[T,V]=2$ because ${}_S^tR\subseteq S$ is catenarian. Gathering all the situations, we get that $R\subseteq S$ is $2$-catenarian, and then catenarian.
\end{proof}

 \begin{remark}\label{3.61} We can easily find examples of catenarian FCP  integral extensions $R\subset S$ such that ${}_S^tR\neq R,S$ with ${}_S^tR\subset S$  catenarian and $\mathcal M_1\cap\mathcal M_2\neq\emptyset$ in  Proposition  \ref{3.6}. It is enough to take $(R,M)$ a local ring, so that $\mathcal M_1=\mathrm {Max}(S)=\mathcal M_2$ since any $N\in\mathrm {Max}(S)$ satisfies $N\cap R=M\in\mathrm {MSupp}({}_S^tR/R)\cap\mathrm{MSupp}(S/{}_S^tR)$. See \cite[Proposition 4.7]{Pic 6} and, for the example \cite[Example 4.10(2)]{Pic 6}: $K\subset L$ is a minimal field extension. Set $R:=K$ and $S:=K\times L$. Then ${}_S^tR=K^2$ and $[R,S]=\{R,{}_S^tR,S\}$ which is obviously catenarian.
\end{remark} 

The remaining case to study is the characterization of t-closed FCP catenarian extensions. 
For an FCP $t$-closed  integral extension, the catenarian property is a convenient property according to Proposition \ref{3.3} and Corollary \ref{1.014}. Then, using Proposition \ref{1.33}, we get that an FCP integral t-closed  extension $R\subset S$ with $I:=(R:S)$ is catenarian if and only if the residual extensions $R/P= \kappa_R(P)\to \kappa_S(Q)=S/Q$ is catenarian 
 for any  $Q\in\mathrm{V}_S(I)$ and $P:=Q\cap R$. It follows that  the catenarian property relies heavily on the  characterization of catenarian field extensions. This last characterization is an open problem. 
 
  In this paper,   a purely inseparable field extension is called 
{\it radicial}.  Note that  if $k\subset L$ is a radicial FIP field extension, then $[k,L]$ is a chain, whence $K \subseteq L$ is catenarian. 
 
The following result by Dobbs-Shapiro gives a new reduction of the study.

 \begin{proposition}\label{3.7} \cite[Theorem 2.9]{DS2}. A finite-dimensional field extension  $k\subset L$, with separable  closure $T$ is  catenarian   if and only if $k\subseteq T$ is catenarian. In particular, a finite-dimensional radicial field extension  is catenarian.
 \end{proposition}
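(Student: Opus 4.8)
We may assume $\mathrm{c}(k)=p>0$, since in characteristic $0$ one has $T=L$ and nothing is to be proved; recall also that a finite-dimensional field extension automatically has FCP, so catenarity is meaningful. The forward implication is immediate from Proposition~\ref{1.4}, as $T\in[k,L]$, and the ``in particular'' clause follows by applying the equivalence to a radicial extension, for which the separable closure of $k$ in $L$ is $k$ itself and $k\subseteq T=k$ is trivially catenarian. So the task is: assuming $k\subseteq T$ catenarian, prove $k\subseteq L$ catenarian. The plan is to transform an arbitrary maximal chain of $[k,L]$, \emph{without changing its length}, into one passing through $T$; its length is then forced by the hypothesis together with the structure of the purely inseparable extension $T\subseteq L$.

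Two elementary observations on minimal field extensions are needed. First, a minimal field extension $F\subset F'$ is either separable or purely inseparable, since the separable closure of $F$ in $F'$ is an intermediate field, hence equals $F$ or $F'$. Second, a purely inseparable minimal extension has degree exactly $p$: if $\alpha\in F'\setminus F$ and $F'/F$ is purely inseparable, choosing $n\geq1$ minimal with $\alpha^{p^{n-1}}\notin F$ produces the intermediate field $F(\alpha^{p^{n-1}})$ of degree $p$, which must equal $F'$. Hence each step of a maximal chain of $[k,L]$ is separable or purely inseparable, the purely inseparable ones have degree $p$, and since their degrees multiply to $[L:T]=p^{e}$, every maximal chain of $[T,L]$ has length $e$; this already gives that a finite radicial extension is catenarian.

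The crucial ingredient is an exchange lemma. Let $F\subset F'$ and $F'\subset F''$ be minimal with $F\subset F'$ purely inseparable of degree $p$ and $F'\subset F''$ separable, and let $G$ be the separable closure of $F$ in $F''$. Multiplicativity of separable and inseparable degrees in towers shows that the separable part of $[F'':F]$ is $[F'':F']$ and its inseparable part is $p$, so $[G:F]=[F'':F']\geq2$ and $[F'':G]=p$; thus $F\subsetneq G\subsetneq F''$, with $G\subset F''$ purely inseparable of degree $p$ (hence minimal) and $F\subset G$ separable. The point is that $F\subset G$ is also \emph{minimal}: a separable and a purely inseparable extension of $F$ are linearly disjoint, so $F''=F'G\cong F'\otimes_{F}G$, and base change along $F\to F'$ therefore sends intermediate fields of $F\subset G$ injectively to intermediate fields of $F'\subset F''$; minimality of $F'\subset F''$ then forces minimality of $F\subset G$. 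Consequently $F\subset G\subset F''$ is a maximal chain of the same length $2$, with the types of the two steps swapped.

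The proof concludes with a bubble-sort. In a maximal chain $k=k_{0}\subset\cdots\subset k_{m}=L$, repeatedly find a purely inseparable step immediately followed by a separable step and interchange them by the exchange lemma; this preserves the length and strictly decreases the number of pairs consisting of an earlier inseparable step and a later separable step, so after finitely many interchanges one obtains a maximal chain $k=k_{0}\subset\cdots\subset k_{r}\subset\cdots\subset k_{m}=L$ whose first $r$ steps are separable and whose remaining steps are purely inseparable. Then $k\subseteq k_{r}$ is separable, so $k_{r}\subseteq T$; and $k_{r}\subseteq L$ is purely inseparable, so any element of $L$ separable over $k$ is separable over $k_{r}$ and hence lies in $k_{r}$, giving $T\subseteq k_{r}$. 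Thus $k_{r}=T$, and the chain splits as a maximal chain of $[k,T]$ followed by one of $[T,L]$, of total length $\ell[k,T]+e$ by the hypothesis and the fact (noted above) that every maximal chain of $[T,L]$ has length $e$ — a value independent of the starting chain. Hence $k\subseteq L$ is catenarian. The hard part is the exchange lemma, specifically checking that the replacement step $F\subset G$ remains minimal; this is exactly where the tensor decomposition $F''\cong F'\otimes_{F}G$ does the work.
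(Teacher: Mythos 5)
Your proof is correct, and there is nothing in the paper to compare it against: the paper imports this statement verbatim from Dobbs--Shapiro as \cite[Theorem 2.9]{DS2} and supplies no proof of its own. Your argument is a complete, self-contained replacement. The forward implication and the radicial case are handled exactly as one would expect (Proposition~\ref{1.4}, and $T=k$ respectively), and the substance lies in your exchange lemma, which is sound: the separable closure $G$ of $F$ in $F''$ has $[G:F]=[F'':F]_s=[F'':F']$ and $[F'':G]=[F'':F]_i=p$ by multiplicativity of separable and inseparable degrees, the linear disjointness of $G/F$ and $F'/F$ gives $F''=F'G\cong F'\otimes_F G$, and injectivity of $H\mapsto F'H$ on $[F,G]$ (recover $H$ as the separable closure of $F$ in $F'H$) transfers minimality of $F'\subset F''$ down to $F\subset G$. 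The bubble-sort termination via the inversion count is also fine, and the endpoint field $k_r$ is correctly identified with $T$ since it is simultaneously separable over $k$ and purely inseparably contained in $L$. It is worth noting that your exchange lemma is a field-theoretic analogue of the Crosswise Exchange (Lemma~\ref{1.13}) that the paper uses for minimal ring extensions with distinct crucial ideals, except that here the ``crucial ideal'' obstruction is absent and the swap is always possible; this is precisely why the separable/inseparable interface never creates the length defects that Lemma~\ref{3.41}(2) exhibits at the inert/non-inert interface.
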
 
 
 In the introduction of their paper  \cite[page 2604]{DS2}, Dobbs and Shapiro noted that a finite-dimensional Galois field extension is catenarian if and only if its Galois group is supersolvable. 
 
Recall that a finite group $G$ is called {\it supersolvable} if there exists a finite chain $\{1\}=G_r\subset \ldots\subset G_1\subset G_0=G$  of normal subgroups of $G$ such that each factor group $G_i/G_{i+1}$ is cyclic. A finite lattice $L$ is called {\it supersolvable} \cite[Example 3.14.4]{St} if it possesses a maximal chain $\mathcal C$ such that the sublattice generated by $\mathcal C$ and any other chain of $L$ is distributive. 
 For a finite Galois extension, we are going to show that being catenarian is equivalent to being supersolvable  as a lattice. 
 To  this aim, we introduce a new material:  An element $x$ of a lattice $(L,\vee,\wedge)$ is {\it left modular} if it satisfies : $(y\vee x)\wedge z=y\vee(x\wedge z)$ for all $y\leq z$, and $L$ is called  {\it left modular} if it has a maximal chain of left modular elements \cite[page 481]{T}.
  The reader is warned that this notion should not be confused   with the classical definition of modular.

\begin{proposition}\label{3.814} \cite[page 2604]{DS2} Let $k\subset L$ be  a finite  Galois extension with Galois group $G$. The following conditions are equivalent: 
 \begin{enumerate}
\item $k\subset L$ is catenarian.
 \item $G$ is supersolvable.
\item $G$ is catenarian.
\end{enumerate}
\end{proposition}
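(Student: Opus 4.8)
The plan is to prove the equivalence of the three conditions in Proposition~\ref{3.814} by exploiting the Galois correspondence, which turns the lattice $[k,L]$ into the \emph{opposite} of the subgroup lattice $L(G)$, and then invoking a known lattice-theoretic theorem characterizing supersolvability of $L(G)$ for finite groups $G$. First I would recall the fundamental theorem of Galois theory: for a finite Galois extension $k\subset L$ with group $G$, the map $H\mapsto L^H$ is an inclusion-reversing bijection between $L(G)$ and $[k,L]$, carrying maximal chains of subgroups to maximal chains of intermediate fields (with lengths matching the composition-factor-style refinements). Hence $k\subset L$ is catenarian if and only if all maximal chains in $L(G)$ have the same length, i.e.\ if and only if the lattice $L(G)$ is graded. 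This immediately makes (1) equivalent to the statement ``$L(G)$ is graded'', and by the definition of a catenarian lattice adopted in the paper (all maximal chains have equal length), it also identifies (3) with the same condition once we interpret ``$G$ is catenarian'' as ``the lattice $L(G)$ is catenarian''; so (1)$\Leftrightarrow$(3) is essentially a translation.

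The substantive content is (2)$\Leftrightarrow$(3): a finite group $G$ is supersolvable (in the group-theoretic sense of the chain of normal subgroups with cyclic quotients recalled just before the statement) if and only if the subgroup lattice $L(G)$ is graded, equivalently supersolvable as a lattice. For this I would cite the classical result of Iwasawa: $G$ is supersolvable if and only if $L(G)$ satisfies the Jordan--H\"older chain condition (all maximal chains between comparable subgroups have the same length). One should also note Stanley's observation that for $G$ supersolvable, a chief series of $G$ furnishes a maximal chain of $L(G)$ witnessing lattice-supersolvability, and conversely a supersolvable lattice is graded; combined with Iwasawa's theorem this closes the loop among ``$G$ supersolvable'', ``$L(G)$ supersolvable as a lattice'', and ``$L(G)$ graded''. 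Thus (2) is equivalent to $L(G)$ being graded, which by the Galois correspondence is equivalent to (1) and (3).

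Concretely the proof would run: (1)$\Leftrightarrow$(3) by the Galois correspondence identifying $[k,L]$ with $L(G)^{\mathrm{op}}$ and noting that a lattice is graded if and only if its opposite is; (3)$\Leftrightarrow$(2) by Iwasawa's theorem (a finite group is supersolvable iff its subgroup lattice is graded) together with Stanley's remark that for such $G$ the subgroup lattice is in fact supersolvable as a lattice. The main obstacle — or rather the point where care is needed — is the bookkeeping on chain lengths under the Galois correspondence: one must check that a cover $H'\lessdot H$ in $L(G)$ corresponds exactly to a minimal (hence covering) extension $L^H\subset L^{H'}$ in $[k,L]$, so that maximal chains correspond length-for-length; this is standard but it is what makes ``catenarian'' translate cleanly between the two sides. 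A secondary point is simply to make explicit the identification of the phrase ``$G$ is catenarian'' with ``$L(G)$ is a catenarian (graded) lattice'', which is a definitional matter once the paper's conventions are in force. Everything else is a citation of Dobbs--Shapiro's observation (which in turn rests on Iwasawa's theorem) recorded in \cite[page 2604]{DS2}.
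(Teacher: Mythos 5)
Your argument is correct, but note that the paper offers no proof of this proposition at all: it is stated as a quotation of Dobbs--Shapiro \cite[page 2604]{DS2}, who in turn record it as an observation. Your reconstruction --- the Galois correspondence identifying $[k,L]$ with the opposite of the subgroup lattice $L(G)$ (covers matching covers, so gradedness transfers), combined with Iwasawa's theorem that a finite group is supersolvable if and only if $L(G)$ is graded --- is exactly the standard argument underlying that observation, so there is nothing to fault; you have simply supplied the proof the paper delegates to its reference.
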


\begin{proposition}\label{3.813}  Let $k\subset L$ be  a finite  Galois  extension. Then, $k\subset L$  is catenarian if and only if 
 $[k,L]$ is   a supersolvable lattice. 
\end{proposition}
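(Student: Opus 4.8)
The plan is to connect this proposition to Proposition~\ref{3.814} by showing that, for a finite Galois extension $k\subset L$ with Galois group $G$, supersolvability of the group $G$ is equivalent to supersolvability of the lattice $[k,L]$. By the Galois correspondence, the lattice $[k,L]$ is anti-isomorphic to the lattice $\mathrm{Sub}(G)$ of subgroups of $G$; since the supersolvability of a lattice is easily checked to be a self-dual notion (reversing all chains in the defining condition, and using that distributivity is self-dual), it suffices to relate the supersolvability of $G$ as a group to the supersolvability of $\mathrm{Sub}(G)$ as a lattice. This is precisely the classical theorem of Iwasawa: a finite group $G$ is supersolvable if and only if its subgroup lattice $\mathrm{Sub}(G)$ is a graded (equivalently, supersolvable) lattice. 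Actually the cleanest route is: $G$ supersolvable $\Leftrightarrow$ $G$ catenarian (i.e. $\mathrm{Sub}(G)$ graded, which is Proposition~\ref{3.814}(2)$\Leftrightarrow$(3) via the Galois dictionary and Iwasawa) $\Leftrightarrow$ $\mathrm{Sub}(G)$ supersolvable as a lattice.

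First I would recall the Galois correspondence giving an order-reversing bijection between $[k,L]$ and $\mathrm{Sub}(G)$, hence a lattice anti-isomorphism; this already shows $[k,L]$ is supersolvable if and only if $\mathrm{Sub}(G)$ is, by self-duality of the lattice-supersolvability condition. Next I would invoke the standard fact (Stanley, \cite[Example 3.14.4]{St}) that a supersolvable lattice is graded; combined with Proposition~\ref{3.814}, catenarity of $k\subset L$ would follow from lattice-supersolvability of $[k,L]$. For the converse — catenarity of $k\subset L$ implies $[k,L]$ supersolvable — I would use Proposition~\ref{3.814} to get that $G$ is supersolvable as a group, then exhibit the required maximal chain of subgroups $\{1\}=G_r\subset\cdots\subset G_0=G$ with cyclic factors, and verify that this chain is a \emph{modular} (in Stanley's sense) maximal chain of $\mathrm{Sub}(G)$: one checks that a normal subgroup $N$ with the property that each $N\cap H$ and $NH$ behave well is a left modular element of the subgroup lattice, a computation going back to Iwasawa's characterization. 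Pulling this chain back through the Galois anti-isomorphism produces the desired maximal chain of $[k,L]$ witnessing supersolvability.

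The main obstacle is the group-theoretic / lattice-theoretic heart: verifying that the chief series of a supersolvable group consists of left modular elements of the subgroup lattice (so that the subgroup lattice is a supersolvable lattice in Stanley's precise sense), and conversely that a supersolvable subgroup lattice forces the group to be supersolvable. This is exactly Iwasawa's theorem dualized into Stanley's framework, and while it is classical, stating and applying it carefully — especially checking the left modularity identity $(H\vee N)\wedge K = H\vee(N\wedge K)$ for $H\leq K$ when $N$ runs through a chief series — is where the real work lies. I would cite Iwasawa and Stanley for this rather than reprove it, so in the write-up the delicate point is just to record the precise statement being used and to confirm that both the Galois anti-isomorphism and the self-duality of lattice supersolvability legitimately transport it to $[k,L]$.

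\begin{proof}
By the fundamental theorem of Galois theory, there is an order-reversing bijection between $[k,L]$ and the lattice $\mathrm{Sub}(G)$ of subgroups of $G$; in particular $[k,L]$ and $\mathrm{Sub}(G)$ are anti-isomorphic as lattices. The property of being a supersolvable lattice in the sense of \cite[Example 3.14.4]{St} is self-dual: if $\mathcal C$ is a maximal chain of $L$ such that the sublattice generated by $\mathcal C$ and any chain of $L$ is distributive, then the same chain $\mathcal C$ (read in the opposite order) witnesses supersolvability of the dual lattice $L^{\mathrm{op}}$, since distributivity is itself a self-dual property. Hence $[k,L]$ is a supersolvable lattice if and only if $\mathrm{Sub}(G)$ is a supersolvable lattice.

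By Iwasawa's theorem, the subgroup lattice $\mathrm{Sub}(G)$ of a finite group $G$ is a supersolvable lattice precisely when $G$ is a supersolvable group: a chief series $\{1\}=G_r\subset\cdots\subset G_0=G$, having cyclic factors, is a maximal chain of $\mathrm{Sub}(G)$ consisting of left modular elements, which is the defining property of a supersolvable lattice, and conversely a supersolvable subgroup lattice forces such a series to exist. Therefore $\mathrm{Sub}(G)$ is a supersolvable lattice if and only if $G$ is supersolvable, and combining with the previous paragraph, $[k,L]$ is a supersolvable lattice if and only if $G$ is supersolvable.

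Finally, by Proposition~\ref{3.814}, $G$ is supersolvable if and only if $k\subset L$ is catenarian. Chaining the equivalences, $k\subset L$ is catenarian if and only if $[k,L]$ is a supersolvable lattice.
\end{proof}
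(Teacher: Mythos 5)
Your proof is correct and follows essentially the same route as the paper: both arguments transport the problem through the Galois anti-isomorphism to the subgroup lattice of $G$, invoke Stanley's result that a supersolvable group has a supersolvable subgroup lattice together with the fact that supersolvable lattices are graded, and close the loop with Proposition~\ref{3.814}. The only cosmetic difference is that you package the transfer of lattice supersolvability across the anti-isomorphism as ``self-duality of the defining condition'' (and cite Iwasawa for the reverse group-theoretic implication), whereas the paper verifies the transfer by an explicit computation with generated sublattices and gets the forward implication by applying gradedness of supersolvable lattices directly to $[k,L]$.
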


\begin{proof} By \cite[page 197]{St1}, a  supersolvable lattice is catenarian. 

Conversely, assume that $k\subset L$ is  catenarian and let $G$ be the Galois group of the extension. In \cite[Example 2.5]{St1}, it was proved that for a supersolvable group $G$, the lattice of its subgroups is a supersolvable lattice. Then Proposition \ref{3.814} asserts that the lattice $\mathcal G$ of subgroups of $G$ is supersolvable. Let $\varphi:[k,L]\to\mathcal G$ defined by $\varphi (K):=$Aut$_K(L)$, the group of $K$-automorphisms of $L$, for each $K\in[k,L]$. 
Then, $\varphi$ 
is a reversing order isomorphism of lattices. Let $\mathcal C_1$ be a maximal chain of $[k,L],\ \mathcal C_2$ be an other chain of $[k,L]$ and $\mathcal H$ the sublattice of $[k,L]$ generated by $\mathcal C_1$ and $\mathcal C_2$, that is the least sublattice of $[k,L]$  containing $\mathcal C_1$ and $\mathcal C_2$, and closed for $\cap$ and $\prod$. Set $\mathcal C'_1:=\varphi(\mathcal C_1)$ and $\mathcal C'_2:=\varphi(\mathcal C_2)$. Then, $\mathcal C'_1$ is a maximal chain of $\mathcal G$ and $\mathcal C'_2$ is a chain of $\mathcal G$. Since $\mathcal G$ is supersolvable, the lattice $\mathcal H'$  of $\mathcal G$ generated by $\mathcal C'_1$ and $\mathcal C'_2$ is distributive, and so is any sublattice of $\mathcal H'$. Because of the reversing order isomorphism $\varphi$, it follows that $\varphi(\mathcal H)$ is the greatest sublattice of $\mathcal G$ contained in  $\varphi(\mathcal C_1)=\mathcal C'_1$ and $\varphi(\mathcal C_2)=\mathcal C'_2$, and closed for $\cap$ and $\prod$. In particular, $\varphi(\mathcal H)\subseteq \mathcal H'$. Then, $\varphi(\mathcal H)$ is distributive, and so is $\mathcal H$. To conclude, $[k,L]$ is supersolvable.
\end{proof}

\begin{proposition}\label{3.81}  Let $k\subset L$ be  a finite  separable extension with normal closure  $N$. Then, $k\subset N$  is catenarian if and only if the Galois group $G$ of $k\subset N$ is  supersolvable. If this condition holds,   $k\subset L$  is catenarian.
\end{proposition}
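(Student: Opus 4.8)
The plan is to treat the two assertions separately. For the equivalence I would first check that $N$ is a finite Galois extension of $k$: by definition the normal closure $N$ is normal over $k$, and it is separable over $k$ because it is the compositum of the (finitely many) $k$-conjugates of $L$, each of which is separable over $k$ since $k\subset L$ is. Thus $k\subset N$ is a finite Galois extension with Galois group $G$, and Proposition~\ref{3.814} applies directly to the extension $k\subset N$: it is catenarian if and only if $G$ is supersolvable. This settles the stated equivalence once one has observed that $N/k$ is Galois; no further work is needed.

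For the last assertion, assume $G$ is supersolvable. By Proposition~\ref{3.814}, implication $(2)\Rightarrow(3)$, the group $G$ is catenarian, i.e. the lattice $\mathcal{G}$ of subgroups of $G$ is graded: any two comparable subgroups are joined by maximal chains of a common finite length. Put $H:=\mathrm{Aut}_L(N)\in\mathcal{G}$. The fundamental theorem of Galois theory applied to the Galois extension $k\subset N$ furnishes an order-reversing bijection between $[k,L]$ and the interval $\{H'\in\mathcal{G}\mid H\subseteq H'\subseteq G\}$ of $\mathcal{G}$, under which maximal chains of $[k,L]$ from $k$ to $L$ correspond exactly to maximal chains of $\mathcal{G}$ between the comparable subgroups $H$ and $G$ (read in reverse order, with the same length). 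Since $\mathcal{G}$ is graded, all of the latter have the same length, hence so do all maximal chains of $[k,L]$ from $k$ to $L$; therefore $k\subset L$ is catenarian. Note that $k\subset L$, being a finite separable extension, has a primitive element and hence is an FIP, a fortiori an FCP, extension, so that catenarity is meaningful here.

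I do not expect any real obstacle. The two points deserving attention are: (i) verifying that the normal closure of a finite separable extension is Galois, which is routine; and (ii) making precise that ``$G$ catenarian'' means the subgroup lattice $\mathcal{G}$ is graded and that gradedness of $\mathcal{G}$ forces all maximal chains between the comparable subgroups $H$ and $G$ to have equal length, which is immediate from the equivalence, recalled in Section~3, between catenarity of an FCP lattice and its being graded (the rank function of $\mathcal{G}$ restricts to the interval $[H,G]$). One could alternatively argue via Proposition~\ref{3.813}, noting that $\mathcal{G}$ is a supersolvable lattice and that this property passes to intervals and to the order-dual, but the direct argument above is shorter.
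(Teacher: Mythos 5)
Your argument is correct. The first assertion is handled exactly as in the paper: observe that the normal closure $N$ of a finite separable extension is finite Galois over $k$ and apply Proposition~\ref{3.814}. Where you diverge is in the second assertion. The paper disposes of it in one line by citing the fact that catenarity of an FCP extension passes to every subextension (Proposition~\ref{1.4}): since $[k,L]$ is the interval between the comparable elements $k$ and $L$ inside the graded lattice $[k,N]$, catenarity of $k\subset N$ immediately yields catenarity of $k\subset L$. You instead re-derive this special case by transporting the problem through the Galois correspondence to the interval $[H,G]$ of the subgroup lattice, with $H=\mathrm{Aut}_L(N)$, and using gradedness there. This is valid, but it is a detour: the order-reversing bijection you invoke is precisely the restriction of the correspondence that already identifies $[k,L]$ with an interval of $[k,N]$, so nothing is gained over a direct appeal to Proposition~\ref{1.4}, and your route additionally leans on the (undefined in the paper, though clearly intended) reading of ``$G$ is catenarian'' as gradedness of the subgroup lattice. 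Your side remarks -- that $N/k$ must be checked to be Galois and that $k\subset L$ has FIP via the primitive element theorem -- are correct and harmless, though the paper treats both as understood.
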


\begin{proof}  Proposition \ref{3.814} entails the first part of the Proposition. If this condition holds,   $k\subset L$  is catenarian, since so is $k\subset N$. 
\end{proof}

Proposition \ref{3.813} cannot be generalized to any catenarian extension.
 Nevertheless, we have the following result.

\begin{proposition}\label{3.812}  Let $R\subset S$ be  an FIP extension. Then, $R\subset S$ is catenarian and left modular  if and only if $R\subset S$ is supersolvable.
\end{proposition}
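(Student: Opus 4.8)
The plan is to prove the equivalence by invoking the general lattice-theoretic characterization of supersolvable lattices in terms of left-modular maximal chains, together with the fact (already available in the literature) that a supersolvable lattice is graded, hence catenarian. Concretely, a finite lattice $L$ is supersolvable if and only if it admits a maximal chain all of whose elements are left modular \emph{and} $L$ is graded; this is the standard reformulation of Stanley's definition due to McNamara--Thomas and Liu, and is precisely what the paper's notion of ``left modular'' (from \cite{T}) is designed to capture. So the content of the statement is to package this known equivalence into the ring-extension language via the complete lattice $[R,S]$, which is finite since $R\subset S$ has FIP.

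First I would prove the forward direction: assume $R\subset S$ is catenarian and left modular. Catenarian means $[R,S]$ is graded (this is already recorded in item (a) of Section~3 of the excerpt, identifying ``catenarian'' with ``graded'' for FCP extensions), and left modular means $[R,S]$ has a maximal chain of left modular elements by definition. By the McNamara--Thomas characterization, a graded lattice with a left-modular maximal chain is supersolvable, and moreover one may take the supersolving chain to be the given left-modular one. Hence $R\subset S$ is supersolvable. For the converse, assume $R\subset S$ is supersolvable, i.e.\ $[R,S]$ possesses a maximal chain $\mathcal C$ such that the sublattice generated by $\mathcal C$ and any other chain is distributive. From this one recovers (i) that each element of $\mathcal C$ is left modular — because the left-modularity identity $(y\vee x)\wedge z = y\vee(x\wedge z)$ for $x\in\mathcal C$ and $y\le z$ is tested inside the distributive sublattice generated by $\mathcal C$ together with the chain $\{y\le z\}$, where it holds automatically — so $[R,S]$ is left modular; and (ii) that $[R,S]$ is graded, because a supersolvable lattice is catenarian (as cited in the excerpt via \cite[page 197]{St1} and used in the proof of Proposition~\ref{3.813}). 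Therefore $R\subset S$ is catenarian and left modular.

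The main obstacle I anticipate is purely bookkeeping about which reformulation of ``supersolvable lattice'' one is entitled to use. Stanley's original definition (the one quoted in the excerpt from \cite[Example 3.14.4]{St}) does not mention left modularity at all, whereas the clean equivalence ``supersolvable $\iff$ graded $+$ left-modular maximal chain'' is the McNamara--Thomas theorem. So the proof must either cite that theorem explicitly or re-derive the two needed implications by hand: (a) a distributive sublattice generated by a fixed chain and an arbitrary chain forces left-modularity of the fixed chain's elements, which is a short direct computation in a distributive lattice; and (b) conversely, in a graded lattice, a left-modular maximal chain generates, together with any other chain, a distributive sublattice — this is the genuinely nontrivial half and is exactly the point where gradedness (catenarity) is indispensable, since left-modularity alone does not suffice. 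I would structure the write-up so that the graded hypothesis is visibly used precisely in supplying the rank function that makes the sublattice-distributivity argument go through, and I would lean on \cite{T} and \cite{St1} for the lattice-theoretic input rather than reproving it, since the paper's aim here is the translation into extension-theoretic terms, not a new proof in pure lattice theory.
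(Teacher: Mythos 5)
Your proposal is correct and follows essentially the same route as the paper: the forward direction is exactly Thomas's theorem that graded left-modular lattices are supersolvable (\cite[Theorem 1.1]{T}, which is what you call the McNamara--Thomas characterization), and the converse combines the facts that a supersolvable lattice is catenarian \cite[page 197]{St1} and left modular \cite[Proposition 2.2]{St1}. The only difference is that you sketch the distributive-sublattice computation for ``supersolvable $\Rightarrow$ left modular'' by hand where the paper simply cites Stanley; the content is the same.
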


\begin{proof} One implication is Thomas paper \cite[Theorem 1.1]{T}. Conversely,  a supersolvable lattice is catenarian \cite[page 197]{St1}. Moreover, \cite[Proposition 2.2]{St1} shows that a supersolvable lattice is left modular.
\end{proof}

We next  give another characterization of  catenarian finite separable field extensions.
 
 For a field $k$, we denote by $k_u[X]$ the set of all monic polynomials of $k[X]$. Our riding hypotheses  are: $L:=k[x]$ is a finite separable (whence FIP) field extension of $k$  with degree $n$ and $f(X)\in k_u[X]$ is {\it the}   
 minimal polynomial of $x$ over $k$.    
  For any $K\in[k,L]$, we denote by $f_K(X)\in K_u[X]$ the minimal polynomial of $x$ over $K$ 
 and set    
 $\mathcal D:=\{f_K\mid K\in[k,L]\}$. The elements of $\mathcal D$ have been characterized in \cite[Proposition 4.13]{Pic 10}. According to  \cite[Corollary 4.9]{Pic 10}, there is a reversing order bijection $[k,L]\to \mathcal D$ defined by $K\mapsto f_K$, which gives that $\mathcal D$ is an ordered set for the divisibility and a lattice. 

We begin with a characterization of minimal subextensions of $[k,L]$.

 \begin{proposition}\label{3.71}  \cite[Proposition 4.8]{Pic 11} Let $L:=k[x]$ be a finite separable field extension of $k$ and $K\subset K'$ a subextension. Then, $K\subset K'$ is minimal if and only if $f_{K'}$ is a maximal proper divisor of $f_K$ in $\mathcal D$.
\end{proposition}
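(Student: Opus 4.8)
The plan is to translate the combinatorial condition ``$f_{K'}$ is a maximal proper divisor of $f_K$ in $\mathcal D$'' through the reversing order bijection $[k,L]\to\mathcal D$, $K\mapsto f_K$, of \cite[Corollary 4.9]{Pic 10} into the lattice-theoretic condition ``$K\subset K'$ is a cover in $[k,L]$'', and then observe that a cover in $[k,L]$ is exactly a minimal extension (an element $T\in[R,S]$ is an atom iff $R\subset T$ is minimal, as recalled at the start of Section 3; the same applies relativized to any subextension, so that $K\subset K'$ is minimal iff $K'$ covers $K$ in $[k,L]$). Since the correspondence $K\mapsto f_K$ is an order-reversing bijection between the posets $([k,L],\subseteq)$ and $(\mathcal D,\mid)$, it carries covers to covers: $K\subsetneq K'$ with nothing strictly in between corresponds to $f_{K'}\mid f_K$ properly with no proper divisor of $f_K$ strictly above $f_{K'}$, i.e.\ $f_{K'}$ is a maximal proper divisor of $f_K$ in $\mathcal D$.

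First I would record the standard field-theoretic facts underlying the bijection: for $K\in[k,L]$ the minimal polynomial $f_K$ of $x$ over $K$ satisfies $\deg f_K=[L:K]$, and $K\subseteq K'$ implies $f_{K'}\mid f_K$ in $K'[X]$ (hence in $\mathcal D$ with the induced order). Then I would invoke \cite[Corollary 4.9]{Pic 10} directly for the fact that $K\mapsto f_K$ is a bijection $[k,L]\to\mathcal D$ reversing order, and the characterization of the elements of $\mathcal D$ in \cite[Proposition 4.13]{Pic 10} if needed to see that $\mathcal D$ is closed under the relevant operations; but for this proposition the bijection and order-reversal already suffice.

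The main step is then purely order-theoretic: an order-reversing bijection $\varphi\colon P\to Q$ between posets satisfies, for $a<b$ in $P$, that $b$ covers $a$ in $P$ if and only if $\varphi(a)$ covers $\varphi(b)$ in $Q$; equivalently, there is no $c$ with $a<c<b$ iff there is no $d$ with $\varphi(b)<d<\varphi(a)$. Applying this with $P=[k,L]$, $Q=\mathcal D$, $\varphi(K)=f_K$, $a=K$, $b=K'$: ``$K\subsetneq K'$ and no intermediate field'' $\iff$ ``$f_{K'}\mid f_K$ properly and no proper divisor of $f_K$ strictly above $f_{K'}$'', which is the statement. Finally I would note that ``$K\subsetneq K'$ with no intermediate field'' is the definition of $K\subset K'$ being minimal, completing the equivalence.

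I expect no serious obstacle here; the only point requiring a line of care is that ``maximal proper divisor of $f_K$ in $\mathcal D$'' must be read within the lattice $\mathcal D$ (not within all monic polynomials of $k[X]$), so that the covering relation transported by $\varphi$ is exactly the intended one. Since $\varphi$ is a bijection onto $\mathcal D$, every proper divisor of $f_K$ lying in $\mathcal D$ is of the form $f_{K''}$ for a unique $K''\supsetneq K$, and the order-reversal makes ``maximal such'' correspond to ``minimal $K''\supsetneq K$'', i.e.\ $K''=K'$ a minimal extension of $K$. This is the whole argument.
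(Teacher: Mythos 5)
Your argument is correct: minimality of $K\subset K'$ means exactly that $K'$ covers $K$ in $[k,L]$, and the order anti-isomorphism $K\mapsto f_K$ of $[k,L]$ onto $(\mathcal D,\mid)$ from \cite[Corollary 4.9]{Pic 10} (which the paper records just before the statement) transports covers to covers, so that $K'$ covers $K$ if and only if $f_{K'}$ is a maximal proper divisor of $f_K$ \emph{within} $\mathcal D$ --- and you rightly flag that the maximality must be read inside $\mathcal D$ and that the converse direction uses that $\varphi^{-1}$ is also order-reversing. The paper itself gives no proof, only the citation \cite[Proposition 4.8]{Pic 11}, and your transport-of-covering-relations argument is the natural one that the quoted reference is expected to carry out.
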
 

\begin{proposition}\label{3.8} A finite separable field extension $k\subset L=k[x]$ is catenarian if and only if all maximal chains of   $\mathcal D$ have the same length.
  \end{proposition}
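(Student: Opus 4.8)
The plan is to transfer the statement along the reversing order bijection $\varphi\colon [k,L]\to\mathcal D$, $K\mapsto f_K$, provided by \cite[Corollary 4.9]{Pic 10}, where $\mathcal D$ is ordered by divisibility (so that $K\subseteq K'$ if and only if $f_{K'}\mid f_K$). Since $L=k[x]$ is a finite separable extension, it has FIP, so $[k,L]$ is finite; hence every chain of $[k,L]$, as well as every chain of the finite poset $\mathcal D$, is finite. In particular ``$k\subset L$ is catenarian'' means exactly that all maximal chains of $[k,L]$ share the common length $\ell[k,L]$, and ``all maximal chains of $\mathcal D$ have the same length'' is the analogous statement for $\mathcal D$.

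First I would record that $\varphi$, being an order anti-isomorphism of posets, carries chains of $[k,L]$ bijectively onto chains of $\mathcal D$ while reversing them and preserving their lengths, and that it identifies the extremal elements: $\varphi(k)=f_k=f$ is the greatest element of $\mathcal D$ and $\varphi(L)=f_L=X-x$ is its least element (both divisibilities being taken in $L[X]$). The crucial point is that $\varphi$ sends covering relations to covering relations: by Proposition \ref{3.71}, for $K\subset K'$ in $[k,L]$ the extension $K\subset K'$ is minimal, i.e. $K'$ covers $K$, if and only if $f_{K'}$ is a maximal proper divisor of $f_K$ in $\mathcal D$, i.e. $f_K$ covers $f_{K'}$. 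Consequently a chain $k=K_0\subset K_1\subset\cdots\subset K_n=L$ is a maximal chain of $[k,L]$ if and only if $f=f_{K_0},f_{K_1},\ldots,f_{K_n}=X-x$ is a maximal chain of $\mathcal D$ (running from its greatest to its least element), and these two chains have the same length $n$.

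It follows that $\mathcal C\mapsto\varphi(\mathcal C)$ is a length-preserving bijection from the set of maximal chains of $[k,L]$ onto the set of maximal chains of $\mathcal D$; hence all maximal chains of $[k,L]$ have the same length if and only if all maximal chains of $\mathcal D$ do, which is the asserted equivalence. The argument is essentially formal once Proposition \ref{3.71} is available; the only step requiring a little attention is checking that $\varphi$ genuinely reverses the covering relation \emph{in both directions} and that the extremal elements of $\mathcal D$ are indeed $f=f_k$ and $X-x=f_L$, so that maximal chains on either side are forced to run between the respective least and greatest elements. This is exactly what Proposition \ref{3.71} and the description of $\mathcal D$ in \cite[Proposition 4.13]{Pic 10} furnish, so no genuine obstacle arises.
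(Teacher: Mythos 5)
Your proposal is correct and takes exactly the route the paper intends: the paper's proof is the one-line instruction ``use the bijection $[k,L]\to\mathcal D$, $K\mapsto f_K$,'' and your write-up simply makes explicit the supporting facts (the anti-isomorphism from \cite[Corollary 4.9]{Pic 10} and the covering-relation correspondence of Proposition \ref{3.71}) that the paper leaves implicit. No discrepancy to report.
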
 

\begin{proof}  Use the bijection $[k,L]\to \mathcal D$ defined by $K\mapsto f_K$.
\end{proof}

The following result due to Dobbs-Mullins  gives an example of catenarian field extensions.

\begin{proposition}\label{3.82} \cite[Proposition 2.2 (a)]{DM} Let $k\subset L$ be  a finite  
 Abelian
  field extension such that $[L:k]=\prod_{i=1}^sp_i^{e_i}$ is it prime-power factorization. Then, $k\subset L$ is catenarian  with $\ell[k,L]=\sum_{i=1}^se_i$. 
\end{proposition}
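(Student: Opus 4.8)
The statement to prove (Proposition \ref{3.82}, attributed to Dobbs–Mullins) concerns a finite \emph{abelian} field extension $k\subset L$ with $[L:k]=\prod_{i=1}^s p_i^{e_i}$, and asserts that $k\subset L$ is catenarian with $\ell[k,L]=\sum_{i=1}^s e_i$. My plan is to reduce everything to the structure theory of finite abelian groups together with the Galois correspondence. By the fundamental theorem of Galois theory, $[k,L]$ is anti-isomorphic (as a lattice) to the lattice of subgroups of the abelian group $G:=\mathrm{Gal}(L/k)$, which has order $\prod_{i=1}^s p_i^{e_i}$. Since any subgroup of an abelian group is normal, $G$ has a composition series all of whose factors are cyclic of prime order; in particular $G$ is supersolvable, so by Proposition \ref{3.814} the extension $k\subset L$ is catenarian. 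This already gives the first assertion.

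\textbf{The length computation.} For the value of $\ell[k,L]$, the key observation is that every maximal chain in the subgroup lattice of $G$ has the same length (since $G$ is supersolvable, hence catenarian as a lattice), and this common length equals the length of a composition series of $G$, namely the number of prime factors of $|G|$ counted with multiplicity, i.e. $\sum_{i=1}^s e_i$. Passing through the anti-isomorphism $\varphi\colon[k,L]\to\mathcal G$, a maximal chain of subfields $k=K_0\subset K_1\subset\cdots\subset K_t=L$ corresponds to a maximal chain of subgroups $G=H_0\supset H_1\supset\cdots\supset H_t=\{1\}$ of the same length $t$; each link $K_j\subset K_{j+1}$ is minimal exactly when $H_{j+1}$ is maximal in $H_j$, which (in a $p$-group factor, or more precisely since consecutive quotients are simple abelian) forces $[H_j:H_{j+1}]$ prime. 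Hence $t=\sum_{i=1}^s e_i$, giving $\ell[k,L]=\sum_{i=1}^s e_i$.

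\textbf{An alternative, more self-contained route.} If one prefers to avoid even invoking Proposition \ref{3.814}, one can argue directly on the group side: decompose $G=\prod_{i=1}^s G_i$ into its Sylow subgroups $G_i$ (each a $p_i$-group of order $p_i^{e_i}$). The subgroup lattice of $G$ is the product of the subgroup lattices of the $G_i$, and correspondingly $[k,L]$ decomposes compatibly; each maximal chain in the subgroup lattice of a finite $p$-group $G_i$ has length exactly $e_i$ because every maximal subgroup of a $p$-group has index $p$ and a $p$-group has a chief series of length equal to the exponent of its order. Summing over $i$ and using that lengths add across the product (an argument parallel to Proposition \ref{3.9}), one gets both catenarity and $\ell[k,L]=\sum_{i=1}^s e_i$ simultaneously.

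\textbf{Main obstacle.} There is essentially no deep obstacle here; the content is entirely the translation between the field-theoretic and group-theoretic pictures, and the only point requiring a little care is making sure the anti-isomorphism of lattices sends minimal (covering) extensions to maximal-subgroup (covering) relations and preserves chain lengths verbatim — which is immediate once one notes $\varphi$ is an order-reversing bijection, so it reverses the covering relation and sends maximal chains to maximal chains of the same length. Since the result is quoted from \cite{DM}, the ``proof'' in the paper is presumably just a citation; if a proof is wanted, the group-theoretic reduction above is the cleanest.
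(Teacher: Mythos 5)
Your proof is correct. The paper itself offers no argument for this statement --- it is quoted verbatim from \cite{DM} as Proposition 2.2(a) --- so there is no internal proof to compare against; what you supply is a legitimate standalone justification that fits naturally into the paper's own framework. Your main route (Galois anti-isomorphism of $[k,L]$ with the subgroup lattice of the abelian group $G$, supersolvability of $G$, hence catenarity via Proposition \ref{3.814}, and the length count from the fact that every maximal subgroup of an abelian --- indeed supersolvable --- group has prime index, so every maximal chain of subgroups has length $\sum_{i=1}^s e_i$) is sound; the one point worth stating explicitly, which you do address, is that an order-reversing bijection of finite posets carries covering relations to covering relations, so maximal chains correspond to maximal chains of equal length. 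Your alternative Sylow-decomposition argument is also fine and is the more elementary of the two, paralleling the product decomposition of Proposition \ref{3.9}; it buys independence from the supersolvability machinery at the cost of invoking the coprime-order product structure of the subgroup lattice. Either version would serve as a proof had the authors chosen to include one.
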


  We can find other examples of catenarian or not catenarian field extensions in     Dobbs and Shapiro's paper \cite{DS2}.

\begin {example}\label{3.83} Let $k\subset L $ be a finite field extension and    $n:=[L:k]$. If $n$ is a prime number or the product $pq$ of any (possibly equal) prime numbers $p$ and $q$, then, $k\subset L $ is catenarian (minimal in the first case, and of length 2 in the second case)\cite[Lemma 2.1(c)]{DS2}. If $n=6m$, where $m$ is an even positive integer, there exists a non-catenarian field extension of degree $n$ \cite[Theorem 2.4]{DS2}. 
\end {example}

\begin {example} \label{3.10} Let $k\subset L$ be a finite separable field extension of degree $n$ which is not catenarian (see Example \ref{3.83}) and let $\Omega$ be an algebraic closure of $k$. Then, $k\subset L$ is \'etale and $\Omega$ diagonalizes $L$ \cite[Proposition 2, A V.29]{Bki A}. More precisely, $\Omega\otimes_k L\cong \Omega^n$. According to Corollary \ref{1.6}, $\Omega\subset \Omega^n$ is catenarian, although $k\subset L$ is not, while $k\subset \Omega$ is faithfully flat (See our remark after Proposition \ref{4.1999}).
  \end{example} 
  
  We end this paper with an application of our results to pointwise minimal extensions. We characterized these extensions in a joint paper with Cahen \cite{CPP}. A ring extension $R\subset S$ is called {\it pointwise minimal}  if $R\subset R[t]$ is minimal for each $t\in S\setminus R$. A pointwise minimal extension is either integral or Pr\"ufer, and in this last case, is minimal, and then catenarian. In the integral case, there is a maximal ideal $M$ of $R$ such that $M=(R:S)$. There are four types of integral pointwise minimal extensions. Since we study the catenarian property, we consider only FCP extensions. So, let $R\subset S$ be an integral FCP pointwise minimal extension. Set $M:=(R:S)$ and $k:=R/M$. Then, $R\subset S$ satisfies one of the following conditions \cite[Theorem 5.4]{CPP} (where  ${}_S^+R$ denotes  the Swan seminormalization of $R$ in $S$ \cite{S}):
\begin{enumerate}
\item $R={}_S^+R$ and $S={}_S^tR$ with $S/M\cong k^2$ if $k\not\cong \mathbb Z/2\mathbb Z$, and $S/M\cong k^n$ if $k\cong \mathbb Z/2\mathbb Z$, where $n:=|\mathrm{V}_S(M)|$.

\item $S={}_S^+R$, with  $x^2\in M$ for any $x\in N$, where $N$ is the unique maximal ideal of $S$ lying above $M$.

\item $R={}_S^tR$, so that $S/M$ is a field and either $k\subset S/M$ is a separable minimal extension or $\mathrm{c}(k)=p$, a prime integer and $x^p\in R$ for any $x\in S$.

\item ${}_S^+R={}_S^tR$, with  $x^2\in M$ for any $x\in N$, where $N$ is the unique maximal ideal of $S$ lying above $M$,Ê $\mathrm{c}(k)=p$ is a prime integer and $x^p\in R$ for any $x\in S$.
\end{enumerate} 

In    all cases, except (4), $R\subset S$ is catenarian  \cite[Proposition 5.8]{CPP}. This is obvious for the two first cases since $R\subset S$ is infra-integral (Proposition \ref{1.5}). In the third case, if $k\subset S/M$ is a separable minimal extension,  $R\subset S$ is minimal and then obviously catenarian. In the second subcase, if $U$ is the separable closure of $k\subset S/M$, we have either $k=U$ or $k\subset U$ minimal, so in both subsubcases $k\subset S/M$ is catenarian according to Proposition \ref{3.7}. The last case leads to a non-catenarian extension. Indeed, setting $T:={}_S^tR=R+N,\ l_1:=\ell[T,S]$ and $\alpha p^{l_1}:=\ell[R,T]$,   \cite[Corollary 5.17]{CPP} shows that for each $i\in\{0,\ldots,l_1\}$, there exists a maximal chain from $R$ to $S$ of length $\alpha p^{i}+l_1$; so that, $R\subset S$ is not catenarian.

\end{document}